\newtheorem{theorem}{Theorem}[section]
\newtheorem{corollary}[theorem]{Corollary}
\newtheorem{lemma}[theorem]{Lemma}
\newtheorem{proposition}[theorem]{Proposition}
\theoremstyle{definition}
\newtheorem{definition}[theorem]{Definition}
\newtheorem{example}[theorem]{Example}
\theoremstyle{remark}
\newtheorem{remark}[theorem]{Remark}
\def\k{\mathsf k}
\def\C{\mathbb C}
\newcommand{\Frac}{{\rm{Frac}}}
\newcommand{\Quot}{{\rm{Quot}}}
\newcommand{\Autk}{{\rm{Aut}}}
\newcommand{\id}{{\rm{Id}}}
\newcommand{\into}{\,\,\hookrightarrow\,\,}
\newcommand{\onto}{\,\,\twoheadrightarrow\,\,}
\def\K{\mathcal K}
\def\L{\mathcal L}
\def\M{\mathcal M}
\def\Oo{\mathcal O}
\def\Z{\mathbb Z}
\def\N{\mathbb N}
\def\A{\mathcal A}
\def\AAA{\mathfrak A}
\def\XX{\mathfrak {X}}
\def\H{\mathbb H}
\def\cfs{\operatorname{cfs}}
\def\m{\mathsf m}
\def\n{\mathsf n}
\def\I{\mathcal I}
\def\Mm{\mathfrak M}
\def\Nn{\mathfrak N}
\def\supp{\mathsf{supp}}
\def\Supp{\operatorname{Supp}}
\newcommand{\Za}{\mathbb{Z}^{\oplus \alpha}}
\newcommand{\SpecMax}{\operatorname{SpecMax}}
\def\Irred{\mathsf{Irred}}
\newcommand{\colim@}[2]{%
  \vtop{\m@th\ialign{##\cr
    \hfil$#1\operator@font colim$\hfil\cr
    \noalign{\nointerlineskip\kern1.5\ex@}#2\cr
    \noalign{\nointerlineskip\kern-\ex@}\cr}}%
}
\newcommand{\colim}{%
  \mathop{\mathpalette\colim@{\rightarrowfill@\scriptscriptstyle}}\nmlimits@
}
\renewcommand{\varprojlim}{%
  \mathop{\mathpalette\varlim@{\leftarrowfill@\scriptscriptstyle}}\nmlimits@
}
\renewcommand{\varinjlim}{%
  \mathop{\mathpalette\varlim@{\rightarrowfill@\scriptscriptstyle}}\nmlimits@
}
\newenvironment{dedication}
  {
   \itshape             
   \raggedleft          
  }
\begin{document}

\title{Characteristic free Galois rings and generalized Weyl algebras}

\author{Jo\~ao Schwarz}
\address{Shenzhen International Center for Mathematics, Southern University of Science and Technology, Shenzhen, China}
\email{jfschwarz.0791@gmail.com}

\subjclass[2020]{Primary: 16G99 16W22 Secondary: 16P40 16S85}
\keywords{Harish-Chandra modules, Galois rings and orders, generalized Weyl algebras, noncommutative invariant theory}

\maketitle

\begin{dedication}
    Dedicated to Fl\'avia and Lais.
\end{dedication}

\medskip

\medskip

\begin{abstract}

This paper develops from scratch a theory of Galois rings and orders over arbitrary fields. Our approach is different from others in the literature in that there is no non-modularity assumption. We prove, when the field is algebraically closed, the analogue of the Main Theorem of the representation theory of Galois orders by V. Futorny and S. Ovsienko. Then we develop a theory of infinite rank generalized Weyl algebras, which was never explicitly introduced in the literature before, and prove its basic properties. We expect their representation theory to be of interest for future works. Finally we show that under very mild assumptions, the invariants of generalized Weyl algebras under the action of non-exceptional irreducible complex reflection groups are a principal Galois orders, greatly generalizing, in an elementary fashion, results obtained previously for the Weyl algebras,

\end{abstract}

\section{Introduction}

An abstract framework that unifies the representation theory of classical Harish-Chandra modules \cite[Chapter 9]{Dixmier} \cite[Chapter 6]{Jantzen}, Gelfand-Tsetlin modules for $\mathfrak{gl}_n$ \cite{DFO0}, and generalized weight modules for generalized Weyl algebras \cite{Bavula} was introduced in \cite{DFO}. In that paper, an abstract notion of Harish-Chandra modules of an associative algebra $A$ with respect to a certain subalgebra $\Gamma$ that is a \textit{Harish-Chandra subalgebra} was introduced. For further developments, see \cite{Fillmore}, \cite{FillHart}, \cite{Schwarz2}.

An offspring of this theory is in the case where $\Gamma$ is commutative and $A$ embeds in a suitable skew (monoid) group ring after localization at $\Gamma^\times$. This is the theory of Galois rings and orders, developed by V. Futorny and S. Ovsienko in the papers \cite{FO} and \cite{FO2}. Significant contributions to this theory are the works \cite{Hartwig}, which introduces the notion of principal Galois order and canonical Harish-Chandra modules; \cite{Webster}, which introduces the notion of a flag order a shows a deep connection of the theory with supersymmetric quantum field theories; and \cite{Hartwig2}, that generalizes the original setting with the role of a skew group ring being replaced by the smash product with a Hopf algebra, and arbitrary base fields.\footnote{In the original setting of \cite{FO} \cite{FO2} the base field is assumed to be algebraically closed and of zero characteristic.} Further developments of both structure and representation theory can be found in \cite{EFOS}, \cite{FS3}, \cite{Hartwig3}, \cite{Jauch}, \cite{Jauch2}, \cite{MV}, \cite{FGRZ}, \cite{lots}, \cite{FHJS}.

In \cite{Hartwig2} it was made manifest the possibility of a good representation theory for Galois rings defined over fields of prime characteristic. In particular, the Main Theorem of the theory of Galois order (cf. \cite[Main Theorem]{FO2}) holds for \textbf{principal} Galois orders in what we call the non-modular case \cite[Corollary 5.7]{Hartwig2}. The main point of this paper is to drop the non-modularity assumption and generalize the results of \cite{Hartwig2} to the \textit{modular case} and Galois orders in the original sense \cite{FO2}. We remark that such generalizations from the point of view of structure theory of rings were carried out in \cite{FHJS}.

After some preliminaries in Section 2, in Section 3 we discuss what are called in the notation and terminology of \cite{Hartwig2} spherical Galois orders when $\tilde{\mathcal{H}}$ is a monoid ring $\k \M$. We develop the theory from scratch, following \cite{FO} and \cite{Hartwig}, with some proofs omitted. Our main task is to show that results from \cite{Hartwig2} also hold in the modular case (cf. Definition \ref{main-def-3}). We have simplified the exposition in the literature in order to make clear that we can work over any field. Our main result in this section is a broad generalization of the fact proved in \cite{Jauch2} that the tensor product of two principal Galois orders over an algebraically closed field of zero characteristic, in the setting of \cite{Hartwig}, is again a principal Galois order. Our generalization is Theorem \ref{main-section-3}.

Section 4 is dedicated to the proof of the Main Theorem of Galois order theory for modular, not necessarily principal, Galois orders, generalizing \cite[Corollary 5.7]{Hartwig2}, Theorem \ref{Main-Theorem}. We follow \cite{FO} \cite{FO2}.

In Section 5 we settled an important open problem in Galois order theory (see \cite{Schwarz2}), and obtained a simple example of a Galois order that is not principal. As was noted in \cite{FS} and \cite{FSS}, most Galois rings in $(L*\M)^W$ have $L$ and $K$ rational and $\M \simeq \Z^n$. We give an example based on the theory of elliptic curves where none of this holds (Proposition \ref{elliptic}), and we generalize the notion of rational Galois order from \cite{Hartwig} to the so called notion of linear algebraic Galois ring (Definition \ref{linear-algebraic-Galois-ring}); we recover Hartwig's notion when the linear algebraic group is a finite dimensional vector space. We show that the Galois rings considered in \cite{FH} and \cite{FS2} are examples of this new notion when the linear algebraic group is a torus.

In Section 6 we develop the notion, introduced in \cite{Schwarz2}, of infinite degree generalized Weyl algebras. The difference is that in this paper we deal with ordinal and not cardinal numbers as degrees of the GWAs. Our main technical tool is Theorem \ref{GWA-as-limits}. With this, we obtain a simplicity criteria for infinite rank degree generalized Weyl algebras in Theorem \ref{GWA-simple}, and for them to be Ore domains (Corollary \ref{GWA-Ore}). We show an embedding in a skew group ring, reminiscent of \cite[Proposition 13]{FS3}, in Theorem \ref{GWA-pre-Galois-ring}. However, since we are considering infinite degree GWAs and noncommutative base rings, the proof is much more difficult (cf. Theorem \ref{GWA-localization}). We finish this section with a birational classification of degree $1$ generalized Weyl algebras over the polynomial ring in one variable (Corollary \ref{birational}).

In Section 7 we show that fixed rings of generalized Weyl algebras when the base ring is an affine domain over an algebraically closed field of zero characteristic under the action of the Shephard-Todd groups $G(n,p,m)$ are principal Galois orders, and we obtain an analogue of noncommutative Noether's problem \cite{AD} and its q-deformation \cite{FH} for classical and quantum generalized Weyl algebras. Our exposition is an improved version of that found in \cite{Schwarz2}. Freeness results over the Harish-Chandra subalgebra are also discussed.




\section{Preliminaries}

In this section we collect the basic facts about the abstract theory of Harish-Chandra subalgebras and modules from \cite{DFO}, and we prove some simple results (which are probably well known) about inductive limits of rings, together with some applications.

\subsection{Reminder of the DFO setting}

In this subsection we recall the basic definitions from \cite{DFO}, further studied in \cite{Fillmore}, \cite{FillHart}, \cite{Schwarz2}.

Let $U$ be an associative algebra and $\Gamma$ a subalgebra. We denote by $\cfs \, \Gamma$ its cofinite spectrum: that is, the set of maximal ideals $\m$ such that $\Gamma/\m$ is finite dimensional. If $\m$ is such an ideal, then $\Gamma/\m$ , by Wedderburn-Artin Theory, is a matrix ring over a finite dimensional division $\k$-algebra, and hence has a unique simple module $\mathsf{S}_\m$.

In \cite{DFO}, two essential properties of $\Gamma$ were considered: being \emph{quasi-central} and \emph{quasi-commutative}.

\begin{definition}\cite{DFO}
    $\Gamma$ is called quasi-central in $U$ if, for every $u \in U$, the $\Gamma$-bimodule $\Gamma u \Gamma$ is a finitely generated left and right $\Gamma$-module.
\end{definition}

\begin{definition}\cite{DFO}
$\Gamma$ is quasi-commutative if, given distinct $\m, \n \in \cfs(\Gamma)$, $\operatorname{Ext}^1(\mathsf{S}_\m, \mathsf{S}_\n)=0$.
\end{definition}

\begin{definition}\cite{DFO}
If we have an alebra $U$ and a subalgebra $\Gamma$ that is both quasi-central and quasi-commutative, then $\Gamma$ is called a \textit{Harish-Chandra subalgebra}.   
\end{definition}

Harish-Chandra subalgebras are crucial for representation theory \cite{DFO} \cite{Fillmore} \cite{FillHart} \cite{FO2} \cite{Hartwig2}, \cite{Webster}, \cite{lots}, \cite{Schwarz2}. In the framework of Galois rings, since $\Gamma$ is always commutative, the only thing remaining to check is being quasi-central.

To verify that an algebra is quasi-central, we have the following useful general results.

\begin{theorem}\label{theorem-quasi-central}
    Let $\Gamma \subset \Lambda$ be two associative algebras, with $\Gamma$ Noetherian and $\Lambda$ a finitely generated left and righ $\Gamma$-module. Let $A$ be an associative algebra containing a localization of $\Lambda$, called $L$; and generated, as an algebra, by $L$ and $\mathsf{X}$, where each $X \in \mathsf{X}$ is such that $X \Gamma \, \, (\Gamma X) \subset \Lambda X \, \, (X \Lambda)$. Finally, assume that $\Lambda \Gamma =\Gamma \Lambda$ in $A$. Then $\Gamma$ is a quasi-central subalgebra of $A$.
\end{theorem}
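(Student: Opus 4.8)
The plan is to single out a class of well-behaved $\Gamma$-sub-bimodules of $A$, show that this class is stable under the operations that build $A$ out of $L$ and $\mathsf{X}$, and verify that the generators lie in members of the class; quasi-centrality will then be immediate from the Noetherian hypothesis on $\Gamma$. Call a $\Gamma$-sub-bimodule $M\subseteq A$ \emph{admissible} if it is finitely generated as a left $\Gamma$-module and as a right $\Gamma$-module. Since $\Gamma$ is Noetherian, any $\Gamma$-sub-bimodule of an admissible bimodule is again admissible; hence it suffices to show that every $u\in A$ lies in some admissible $M$, for then $\Gamma u\Gamma\subseteq M$ is admissible, which is exactly what quasi-centrality asks. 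I would first record that admissible bimodules are closed under finite sums (clear) and under products: if $M$ and $N$ are admissible and $N=\sum_k\Gamma n_k$ with finitely many $n_k$, then $MN=\sum_k Mn_k$ (since $M\Gamma=M$), a finite sum of homomorphic images of $M$ as a left $\Gamma$-module via $\mu\mapsto\mu n_k$, hence finitely generated on the left; writing instead $M=\sum_j m_j'\Gamma$ gives $MN=\sum_j m_j'N$, finitely generated on the right; and $MN$ is visibly a $\Gamma$-sub-bimodule. So $MN$ is admissible.

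Next I would check the generators. For $u\in\Lambda$ the bimodule $\Gamma u\Gamma\subseteq\Lambda$ is admissible because $\Lambda$ is finitely generated on both sides over the Noetherian ring $\Gamma$; in particular $\Lambda$ itself is admissible. For $X\in\mathsf{X}$, the hypotheses $X\Gamma\subseteq\Lambda X$ and $\Gamma X\subseteq X\Lambda$, together with $\Gamma\Lambda=\Lambda\Gamma=\Lambda$, give $\Gamma X\Gamma\subseteq\Lambda X$ and $\Gamma X\Gamma\subseteq X\Lambda$; since $\Lambda X$ is a homomorphic image of $\Lambda$ as a left $\Gamma$-module and $X\Lambda$ of $\Lambda$ as a right $\Gamma$-module, both are finitely generated, so $\Gamma X\Gamma$ is admissible. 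The genuinely delicate point is $L$: realizing $L$ as the Ore localization of $\Lambda$ at a multiplicative set $\Sigma\subseteq\Gamma$, any $\ell\in L$ can be written $\ell=\lambda s^{-1}$ with $\lambda\in\Lambda$ and $s\in\Sigma$; since $s$ lies in $\Gamma$ and commutes with $\Gamma$, right multiplication by $s^{-1}$ is an isomorphism of $\Gamma$-bimodules carrying $\Lambda$ onto $\Lambda s^{-1}$, whence $\Gamma\ell\Gamma=(\Gamma\lambda\Gamma)s^{-1}$ is admissible, being the image of the admissible bimodule $\Gamma\lambda\Gamma\subseteq\Lambda$. (When $\Gamma$ is not commutative one argues identically, using that $\Sigma$ is a denominator set inside $\Gamma$, so that $\Sigma^{-1}\Gamma\subseteq L$ is again Noetherian and the relevant one-sided module maps remain $\Gamma$-linear.)

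Finally I would assemble the pieces. Since $A$ is generated as an algebra by $L\cup\mathsf{X}$, every $u\in A$ is a finite sum of (scalar multiples of) products $g_1g_2\cdots g_r$ with each $g_i\in L\cup\mathsf{X}$; by the previous paragraph each $g_i$ lies in an admissible bimodule $M_i$, so $g_1\cdots g_r\in M_1M_2\cdots M_r$, which is admissible by closure under products, and hence $u$ lies in a finite sum of admissible bimodules, itself admissible. Thus $\Gamma u\Gamma$ is a finitely generated left and right $\Gamma$-module for every $u\in A$, i.e. $\Gamma$ is quasi-central in $A$. I expect the main obstacle to be precisely the localization step: one must be sure that the localization producing $L$ is taken at a multiplicative subset of $\Gamma$ (or of its centre), so that passing to inverses of those elements does not destroy finiteness over $\Gamma$; once this is secured, the rest is routine Noetherian bookkeeping with the bimodule arithmetic above.
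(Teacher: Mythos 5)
Your proof is correct and follows essentially the same route as the paper's: reduce to the algebra generators $L\cup\mathsf{X}$ via the Noetherian and product bookkeeping, bound $\Gamma X\Gamma$ inside $\Lambda X$ and $X\Lambda$, and treat elements of $L$ through the localization. You are in fact slightly more explicit than the paper about the closure of finitely generated sub-bimodules under products and about why the localization step is harmless (the paper simply asserts $\Gamma a\Gamma=\Gamma a=a\Gamma$ for $a\in L$, which is justified in the intended commutative applications), so the extra care you flag at the end is well placed but does not change the argument.
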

\begin{proof}
    For any $a,b \in A$, $\Gamma(a+b)\Gamma=\Gamma a \Gamma + \Gamma b \Gamma$, and $\Gamma a b \Gamma \subset (\Gamma a \Gamma) (\Gamma b\Gamma)$. Hence, as $\Gamma$ is Noetherian, it is enough to show that $\Gamma a \Gamma$ is a finitely generated left and right $\Gamma$ module for $a \in L \cup \mathsf{X}$. If $a \in L$, as $\Gamma \Lambda = \Lambda \Gamma$ and $L=\Lambda_{S^{-1}} \, (_{S^{-1}}\Lambda)$, $\Gamma a \Gamma=\Gamma a=a\Gamma$, and so we are done. If $a=X \in \mathsf{X}$, $\Gamma X \Gamma \subset \Lambda X$. As $\Gamma$ is Noetherian and $\Lambda$ is a finitely generated left $\Gamma$-module, so is $\Lambda X$, for it is an homomorphic image of $\Lambda$. Again, since $\Gamma$ is Noetherian and $\Gamma X \Gamma$ is a submodule of $\Lambda X$, it is as well a finitely generated left $\Gamma$-module. We have by a similar reasoning that $\Gamma X \Gamma$ is a finitely generated right $\Gamma$-module. Hence $\Gamma$ is quasi-central.
\end{proof}

\begin{proposition}\label{proposition-quasi-central}
Let $\Gamma$ be a subalgebra of an algebra $A$, and suppose that $A$ is generated by a family of elements $\{a_i \}_{i \in I}$ with the property that for each $a_i$, $\Gamma a_i = a_i \Gamma$. Then $\Gamma$ is a quasi-central subalgebra.    
\end{proposition}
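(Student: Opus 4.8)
The plan is to deduce Proposition \ref{proposition-quasi-central} directly from Theorem \ref{theorem-quasi-central} by a suitable choice of data, rather than reproving quasi-centrality from scratch. First I would take $\Gamma$ itself in the role of both $\Gamma$ and $\Lambda$ in the theorem: since $\Gamma \subset \Gamma$ trivially, and $\Gamma$ is a finitely generated (free of rank one) left and right $\Gamma$-module, the only hypothesis of Theorem \ref{theorem-quasi-central} that is not automatic is that $\Gamma$ be Noetherian. This is the one genuine gap, so the statement as I would prove it should really be read with that caveat, or else handled directly; see the last paragraph.

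Granting Noetherianity of $\Gamma$, I would next take $L = \Gamma$ (the trivial localization $S = \{1\}$) and $\mathsf{X} = \{a_i\}_{i \in I}$. The condition required of each $X \in \mathsf{X}$ in the theorem is $X\Gamma \subset \Lambda X = \Gamma X$ and $\Gamma X \subset X\Lambda = X\Gamma$; both hold because our hypothesis is precisely $\Gamma a_i = a_i \Gamma$ for every $i$. The condition $\Lambda \Gamma = \Gamma \Lambda$ becomes $\Gamma \cdot \Gamma = \Gamma \cdot \Gamma$, which is trivially true. Finally, $A$ is generated as an algebra by $L \cup \mathsf{X} = \Gamma \cup \{a_i\}_{i\in I}$, and since $\Gamma$ is generated by the $a_i$ together with scalars, $A$ is indeed generated by $L$ and $\mathsf{X}$. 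Theorem \ref{theorem-quasi-central} then yields that $\Gamma$ is quasi-central in $A$.

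Alternatively, and perhaps more honestly, I would give the one-line self-contained argument: for $a \in A$ a monomial $a = a_{i_1} \cdots a_{i_k}$ in the generators, repeatedly moving $\Gamma$ past each factor using $\Gamma a_{i_j} = a_{i_j}\Gamma$ gives $\Gamma a \Gamma = a \Gamma = \Gamma a$, a cyclic (hence finitely generated) left and right $\Gamma$-module. A general element of $A$ is a finite $\k$-linear combination of such monomials, and since $\Gamma(x+y)\Gamma \subseteq \Gamma x \Gamma + \Gamma y \Gamma$, the bimodule $\Gamma a \Gamma$ for arbitrary $a$ is contained in a finite sum of cyclic $\Gamma$-modules, hence finitely generated on each side. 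No Noetherian hypothesis is needed in this direct form because each $\Gamma a \Gamma$ is already visibly finitely generated without passing to submodules. The main obstacle is purely bookkeeping: making sure that ``generated as an algebra'' is interpreted so that $\Gamma$-coefficients can be absorbed (equivalently, that the unit lies in $\Gamma$ or that we work with the $\k$-span of monomials times $\Gamma$ on one side), after which the commutation relation does all the work.
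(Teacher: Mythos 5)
Your second, self-contained argument is essentially the paper's own proof, and for monomials it is complete: if $b$ is a product of the $a_i$, then $\Gamma b\Gamma=b\Gamma=\Gamma b$ is cyclic on either side. The genuine gap is in the passage to sums. From $\Gamma(x+y)\Gamma\subseteq \Gamma x\Gamma+\Gamma y\Gamma$ you conclude finite generation because $\Gamma a\Gamma$ sits inside a finite sum of cyclic modules; but a submodule of a finitely generated module need not be finitely generated unless $\Gamma$ is Noetherian, so your assertion that ``no Noetherian hypothesis is needed in this direct form'' is precisely where the argument breaks. The inclusion is in general proper, and in fact the proposition as stated fails without some finiteness assumption on $\Gamma$. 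Take $\Gamma=\k[x_1,x_2,\dots]$ (countably many variables, $\operatorname{char}\k\neq 2$), let $\sigma$ be the automorphism $x_i\mapsto 2x_i$, and let $A=\Gamma[g^{\pm 1};\sigma]$, generated by the $x_j$ and $g^{\pm 1}$, each of which satisfies $\Gamma a_i=a_i\Gamma$. For $u=1+g$ one has $\gamma u\delta=\gamma\delta+g\,\sigma^{-1}(\gamma)\delta$, and the right $\Gamma$-linear projection $\Gamma\oplus g\Gamma\to\Gamma$, $a+gb\mapsto a-b$, sends $\Gamma u\Gamma$ onto the ideal generated by all $\gamma-\sigma^{-1}(\gamma)$, which is the non--finitely-generated maximal ideal $(x_1,x_2,\dots)$; since quotients of finitely generated modules are finitely generated, $\Gamma u\Gamma$ is not a finitely generated right $\Gamma$-module. (The paper's own proof has the same defect: the asserted implication that $\Gamma x=x\Gamma$ and $\Gamma y=y\Gamma$ give $\Gamma(x+y)=(x+y)\Gamma$ already fails for $x=1$, $y=g$ in this example.)

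Your first argument, reducing to Theorem \ref{theorem-quasi-central} with $\Lambda=L=\Gamma$ and $\mathsf{X}=\{a_i\}$, is the correct repair: once $\Gamma$ is assumed Noetherian, all the hypotheses are verified exactly as you say and the conclusion follows, and this is also the cleanest way to patch the paper's version. So the statement should be read with a Noetherian hypothesis on $\Gamma$ (which is harmless for every use made of it later, where the relevant subalgebra is a Noetherian base ring or a commutative affine algebra), and you were right to flag the missing hypothesis rather than paper over it; the only error is the subsequent claim that the direct argument avoids it.
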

\begin{proof}
    Let $b_1b_2 \ldots b_k$ be a product of $a_i$'s. Then $\Gamma b_1b_2 \ldots b_k=b_1b_2 \ldots b_k \Gamma$. Also, if $x, y \in A$ are such that $\Gamma x=x \Gamma$ and $\Gamma y= y \Gamma$, then $\Gamma(x+y)=(x+y)\Gamma$. Hence we are done.
\end{proof}

\subsection{Inductive limit of rings}

In this subsection we collect some results about inductive limits of rings that are probably well known, but for which we could not find an appropriate reference to quote.

Recall that an (uppward) directed set is a poset $\mathcal{I}$ with the property that for each $i,j \in \I$ there is a $k \in I$ with $k \geq i,j$. A direct system of rings indexed by $\I$ is a collection of rings $R_i$ indexed by $i \in I$, and connecting morphisms $f_{ij}:R_i \rightarrow R_j$ for $j \geq i$, such that $f_{ii}=\id$ and for $k \geq j \geq i$, $f_{ik}= f_{jk}. \circ f_{ij}$. If each $f_{ij}$ is injective, we say that the directed system is \textit{injective}.

The ring $R=\varinjlim_{i\in\I} R_i$ has the concrete description as $\bigsqcup_{i \in \I} R_i/ \sim$, where $a \in R_i$ and $b \in R_j$ are related by $\sim$ if and only if there exists an $k \geq i,j$ with $f_{ki}(a)=f_{kj}(b)$; addition and multiplication are defined as usual (cf. \cite[Appendix A]{Matsumura}). We denote the equivalence class of some $a \in R_i$ as $[a]$.

For each $i \in \I$ and $a \in R_i$, consider the ring morphism $\phi_i:R_i \rightarrow R$, $a \mapsto [a]$. Then $\phi_i(r_i)=\phi_j(f_{ij}(r_i)), \, i,j \in \I, j \geq i, \, r_i \in R_i$. These are called the \emph{canonical morphisms}.

Finally, we have the following universal property: if we have a ring $B$ and for each $i \in \I$ morphisms $\psi_i: R_i \rightarrow B$ which for $j \geq i$, $\psi_i=\psi_j \circ f_{ij}$, then there is a unique morphism $\psi: R \rightarrow B$ with $\psi_i=\psi\circ\phi_i$ for each $i \in \I$.

\begin{proposition}\label{prop-inductive-limit}
    Consider an injective directed system of rings $R_i$ and its inductive limit $R=\varinjlim_{i\in\I} R_i$.
    \begin{enumerate}
        \item If every $R_i$ is a domain, then so is $R$.
        \item If every $R_i$ is a simple ring,then so is $R$.
        \item If every $R_i$ is an Ore domain, then so is $R$, and $\Frac \, R = \varinjlim_{i \in \I} \Frac \, R_i$
    \end{enumerate}
\end{proposition}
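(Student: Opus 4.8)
The plan is to exploit the concrete description $R=\bigsqcup_{i}R_i/\!\sim$ together with two elementary observations: first, every element of $R$ is of the form $[a]$ for some $a\in R_i$ and some $i$; second, because the system is injective, the canonical maps $\phi_i\colon R_i\to R$ are themselves injective, so $R_i$ may be regarded as a subring of $R$, and moreover any finite set of elements of $R$ lies in the image of a single $\phi_i$ (use the directedness of $\I$ to find a common index, and injectivity of the $f_{ij}$ to transport representatives there without collision).

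For (1), suppose $[a][b]=0$ in $R$ with $a\in R_i$, $b\in R_j$. Pick $k\geq i,j$ and replace $a,b$ by their images $f_{ik}(a),f_{jk}(b)\in R_k$; then $[f_{ik}(a)f_{jk}(b)]=0$ means there is $\ell\geq k$ with $f_{k\ell}\bigl(f_{ik}(a)f_{jk}(b)\bigr)=0$ in $R_\ell$. Since $R_\ell$ is a domain, one of the two factors $f_{i\ell}(a)$, $f_{j\ell}(b)$ vanishes in $R_\ell$, hence $[a]=0$ or $[b]=0$. (One also notes $1_R=[1_{R_i}]\neq 0$ since each $R_i$ is a nonzero ring.) For (2), let $I\trianglelefteq R$ be a nonzero two-sided ideal and pick $0\neq[a]\in I$ with $a\in R_i$; by injectivity of $\phi_i$ we have $a\neq 0$ in $R_i$. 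The set $I\cap\phi_i(R_i)$ corresponds to a two-sided ideal of $R_i$ containing $a$, hence (by simplicity of $R_i$) containing $1_{R_i}$; therefore $1_R\in I$ and $I=R$. For (3), first note $R$ is a domain by (1); it remains to verify the right Ore condition (and, symmetrically, the left one) and to identify $\Frac R$. Given $[a],[s]\in R$ with $[s]\neq 0$, lift both to a common $R_k$; since $R_k$ is right Ore there exist $b,t\in R_k$ with $t\neq 0$ and $at=sb$, whence $[a][t]=[s][b]$ with $[t]\neq 0$ (injectivity of $\phi_k$). Thus $R$ is a right (and left) Ore domain and $\Frac R$ exists. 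The canonical inclusions $R_i\hookrightarrow R\hookrightarrow\Frac R$ extend uniquely to ring maps $\Frac R_i\to\Frac R$ forming a compatible cone, inducing $\theta\colon\varinjlim_i\Frac R_i\to\Frac R$; conversely every element of $\Frac R$ is $[a][s]^{-1}$ with $a,s$ in a common $R_k$, giving a well-defined inverse, so $\theta$ is an isomorphism.

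The only subtle point, and the one I would write out most carefully, is the passage to a common index together with the use of injectivity: one must check that if $a\in R_i$ represents $0$ in $R$ then already $a$ maps to $0$ under some $f_{i\ell}$ with $\ell$ above the relevant indices — this is immediate from the definition of $\sim$ and the transitivity of the system — and, dually, that distinct elements of $R_k$ remain distinct in $R$, which is exactly injectivity of $\phi_k$ and follows from injectivity of all the $f_{jk}$. Everything else is a routine transfer of the defining property from the $R_i$'s to $R$; there is no genuine obstacle beyond this bookkeeping. For part (3) one should also remark that the Ore localization is computed "within $R$", so that the comparison with $\varinjlim\Frac R_i$ is a formal consequence of the universal properties of localization and of direct limits rather than requiring any further hands-on argument.
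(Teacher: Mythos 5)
Your proposal is correct and follows essentially the same route as the paper: identify each $R_i$ with its image under the injective canonical map, write $R=\bigcup_i R_i$, and transfer each property (no zero divisors, simplicity via $I\cap R_j$, the Ore condition, and the identification of $\Frac R$) by passing to a common index. The only cosmetic difference is that you phrase the comparison $\Frac R\simeq\varinjlim\Frac R_i$ via universal properties where the paper argues directly that every fraction already lives in some $\Frac R_j$; both are fine.
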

\begin{proof}
    For each $i \in \I$ and $a \in R_i$, consider the canonical morphism $\phi_i:R_i \rightarrow R$, $a \mapsto [a]$. We claim that $\phi_i$ is injective. Indeed, if we had $[a]=[0]$, for some $j \geq i$, we would have $f_{ij}(a)=0$; but $f_{ij}$ is injective, so this implies $a=0$. Identifiying $R_i$ with its image under $\phi_i$ in $R$, we can write $R=\bigcup_{i \in \I} R_i$\footnote{When we have an injective indutive limit of rings, the identification $R=\varinjlim_{i\in\I} R_i=\bigcup R_i$ will be made frequently and usually without comments.}. Assume now that each $R_i$ is a domain. Let $a, b$ belong to $R$. Then $a \in R_i$ and $b \in R_j$, and there is $k \geq i,j$, with $a,b \in R_k$. If $a,b \neq 0$ , then $ab \in R_k$ is also non-zero. So $R$ is a domain. For (2) assume each $R_i$ is a simple ring. If $I \lhd R$ is non-zero, we must show that $R=I$. As  $R=\bigcup_{i \in \I} R_i$, for some $j$, $I \cap R_j \neq (0)$. Since $R_j$ is simple, $I\cap R_j = R_j$. This implies $1 \in I$, and so $I=R$. Lastly, assume each $R_i$ is an Ore domain. By (1) we already have that $R$ is a domain. Let $s \in R^\times$ and $r \in R$. We must find $r' \in R$ and $s' \in R^\times$ such that $rs'=sr'$ --- and similarly for the left Ore condition. But there is an $j$ such that $r,s \in R_j$, so we can find the needed $r',s'$ already in $R_j$ by hypothesis. To show that  $\Frac \, R = \varinjlim_{i \in \I} \Frac \, R_i$, we first remark that the inverse limit of division rings is a division ring, by an argument similar to the proof of (1). Finally, it is clear that each $rs^{-1} \in \Frac \, R$ already belongs to some $\Frac \, R_j$ for a suitable $j \in \I$, and so we are done.
\end{proof}

We shall obtain as a consequence a version of the noncommutative Noether's problem \cite{AD}, in \textit{any} characteristic (cf. \cite{SchwarzPan}), for the infinite symmetric group.

\begin{definition}\label{infinite-Weyl}
    Let $W_n(\k)$ be the rank any Weyl algebra. Given the natural chain of embeddings $W_1(\k)\into W_2(\k) \into \ldots \into W_n(\k) \into \ldots$, define $W_\omega(\k)=\varinjlim W_n(\k)$.
\end{definition}

When $\k$ has zero characteristic, the representation theory of $W_\omega(\k)$ was considered in \cite{BBF} and \cite{FGM}.

\begin{theorem}
    Let $W_\omega(\k)=\k \langle x_1, x_1, \ldots, y_1, y_2, \ldots \rangle$ and let $S_\omega$ be the subgroup of all permutations of $\N^\times$ that fixes all but a finite number of elements. Then $\Frac \, W_\omega(\k)^{S_\omega}=\Frac \, W_\omega(\k)$, if $\operatorname{char} \k=0$ and the field is arbitrary, or if $\operatorname{char} \k=p>0$ and $\k$ is algebraically closed.
\end{theorem}
\begin{proof}
    By Proposition \ref{prop-inductive-limit}(3) and \cite[Theorem 1.1]{FS} in characteristic $0$ and \cite[Corollary 1.4]{SchwarzPan} for prime characteristic.
\end{proof}

\subsection{Somme noncommutative invariant theory}

We simply recall one important result from noncommutative invariant theory. It is a noncommutative analogue of Noehter's celebrated Theorem about invariants of affine commutative algebras.

\begin{theorem}\label{MS}
Let $R$ be a finitely generated  Noetherian $\k$-algebra. If $G$ is a finite group of automorphisms of $R$, with $|G|^{-1} \in \k$, then $R^G$ is a finitely generated $\k$-algebra, $R^G$ is Noetherian, and $R$ is a finitely generated $R^G$-module.
\end{theorem}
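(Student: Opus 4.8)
The statement is the classical Noether-type theorem for noncommutative invariant theory, essentially due to Montgomery–Small. The plan is to reduce the noncommutative assertion to the commutative Noether bound via a standard trace/averaging argument, exactly as in the classical case. First I would introduce the Reynolds-type operator $\pi : R \to R^G$, $\pi(r) = |G|^{-1}\sum_{g \in G} g(r)$, which is well-defined because $|G|^{-1} \in \k$; it is an $R^G$-bimodule projection onto $R^G$. The key structural input is that $R$, being finitely generated and Noetherian, satisfies the ascending chain condition on left (and right) ideals, and one wants to show $R$ is module-finite over $R^G$; once that is known, finite generation and Noetherianity of $R^G$ follow formally (the Artin–Tate lemma, or its one-sided noncommutative analogue, gives that a subalgebra $S \subseteq R$ with $R$ module-finite over $S$ and $R$ a finitely generated algebra forces $S$ to be a finitely generated algebra, provided $S$ is Noetherian — and Noetherianity of $R^G$ in turn follows since a ring over which a faithful module is Noetherian and which is a direct summand as a bimodule inherits the ACC).

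The core step is therefore: $R$ is a finitely generated $R^G$-module. Let $r_1,\dots,r_n$ generate $R$ as a $\k$-algebra. Enlarging the set, I may assume it is $G$-stable, so $G$ permutes $\{r_1,\dots,r_n\}$. Consider the commutative polynomial ring $\k[t_1,\dots,t_n]$ with $G$ acting by the corresponding permutations; by the classical Noether theorem (valid since $|G|^{-1}\in\k$) the invariant ring $\k[t_1,\dots,t_n]^G$ is module-finite over a polynomial subring generated by finitely many homogeneous invariants $f_1,\dots,f_m$, and in fact $\k[t_1,\dots,t_n]$ itself is a finitely generated module over $\k[f_1,\dots,f_m]$ — say generated by monomials $t^{\alpha}$ with $|\alpha| < N$ for a suitable Noether bound $N$. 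Now specialize $t_i \mapsto r_i$. This is not an algebra map in general because $R$ is noncommutative, but the point is combinatorial: every word $w = r_{i_1}\cdots r_{i_\ell}$ in the generators can be rewritten, using the commutative identity after applying $\pi$, as an $R^G$-linear combination of words of bounded length. More precisely, $f_j(r_1,\dots,r_n) \in R^G$ since $f_j$ is a symmetric (invariant) polynomial and $\pi$ fixes it up to the ordering issue — here one must be slightly careful and instead argue by induction on word length: given a word of length $\ell \geq N$, use that in the commutative ring the corresponding monomial lies in the ideal generated by the $f_j$ modulo lower-degree monomials, lift this relation, apply $\pi$, and absorb the $f_j$'s (which lie in $R^G$, or rather a suitable symmetrization of them does) to express the word modulo shorter words. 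Thus $R$ is spanned over $R^G$ by the (finitely many) words of length $< N$.

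The main obstacle is exactly this last noncommutative bookkeeping: the naive substitution $t_i \mapsto r_i$ does not respect multiplication, so one cannot literally transport the commutative module-finiteness relation. The honest route is to set $R^G$-module filtration $F_\ell R = \sum_{|i|\le \ell}R^G \cdot r_{i_1}\cdots r_{i_\ell} \cdot R^G$ or rather to work with the associated graded and observe that the obstruction terms are always of strictly lower filtration degree, so a straightforward induction closes. Alternatively — and this is cleaner — one invokes the Montgomery–Small theorem directly as a black box, which is precisely what the phrasing of Theorem \ref{MS} invites: cite \cite{AD} or the original Montgomery–Small paper for the statement that a finitely generated Noetherian $\k$-algebra acted on by a finite group $G$ with $|G|$ invertible has module-finite, finitely generated, Noetherian invariant subring. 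Given the role this theorem plays in the paper (it is used only as an auxiliary input in Section 7), I expect the intended proof is simply this citation, with the averaging operator $\pi$ recalled for the reader's convenience; the substantive content lives in the cited literature rather than being reproved here.
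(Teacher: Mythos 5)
Your final guess is right: the paper's ``proof'' is a pure citation --- finite generation of $R^G$ as an algebra is attributed to Montgomery--Small \cite{MS}, Noetherianity of $R^G$ to \cite[Corollary 1.12]{Montgomery}, and module-finiteness of $R$ over $R^G$ to \cite[Corollary 5.9]{Montgomery}. The formal reductions in your first paragraph (the averaging projection $\pi$, deducing Noetherianity of $R^G$ from the ACC on $R$ via $I = RI \cap R^G$, and the noncommutative Artin--Tate step) are all sound and are indeed how the cited results are organized.

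However, your attempted direct proof of the core step --- that $R$ is a finitely generated $R^G$-module --- has a genuine gap, which you half-acknowledge but do not close. The reduction to the commutative polynomial ring $\k[t_1,\dots,t_n]^G$ and the Noether bound cannot be transported to $R$: when you lift a commutative relation and try to reorder a word $r_{i_1}\cdots r_{i_\ell}$, the error terms involve commutators $[r_{i_j},r_{i_{j+1}}]$, which in a general finitely generated Noetherian algebra are elements of the \emph{same} word-length filtration degree, not lower. The associated graded of $R$ with respect to the word-length filtration is a quotient of the free algebra and need not be commutative, so the claim that ``the obstruction terms are always of strictly lower filtration degree'' fails, and the induction does not close. (This is exactly why the commutative Noether argument --- integrality of each $r$ over $R^G$ via $\prod_{g\in G}(X-g(r))$ --- also breaks down noncommutatively: the coefficients of that polynomial need not centralize $r$.) The actual proof of module-finiteness in \cite[Corollary 5.9]{Montgomery} goes through the skew group ring $R*G$, the idempotent $e=|G|^{-1}\sum_{g}g$, and the identifications $e(R*G)e\simeq R^G$ and $(R*G)e\simeq R$, using Noetherianity of $R*G$; no Noether-bound or specialization argument appears. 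If you want a self-contained proof rather than a citation, that Morita-context argument is the one to reproduce.
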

\begin{proof}
    The first claim is proven in \cite{MS}, the second one in \cite[Corollary 1.12]{Montgomery}, and the third one in \cite[Corollary 5.9]{Montgomery}.
\end{proof}



\section{Characteristic free Galois rings}

The first part of this paper consists in simplifying the setting of \cite{Hartwig2} for the case $\tilde{\mathcal{H}}$ is $\k \, \Autk \, \Lambda$ and $\mathcal{H}$ is $\k \M$, where $\M$ is a submonoid of $\Autk \, \Lambda$ (and $\Lambda$ is a Noetherian integral domain and $\k$-algebra).
A main distinction in our approach is that, unlike \cite{Hartwig2}, is not necessary to assume that the Galois ring is non-modular (cf. Definition \ref{main-def-3}).

\begin{definition}\label{main-def-1}\cite{Hartwig2}
    A \emph{flag ring setting} is given by a Noetherian $\k$-algebra $\Lambda$ which is an integral domain and $\M$ a submonoid of $\Autk_\k \, \Lambda$. We denote such a setting by $(\Lambda, \M)$. A \emph{Galois ring setting} is a tuple ($\Lambda$, $\M$, $W$), where $(\Lambda, \M)$ is a flag ring setting and $W$ is a finite subgroup of $\Autk_\k \, \Lambda$ acting by conjugation on $\M$ with a finite number of orbits and such that $\M$ is \emph{separating}, that is, $\M \M^{-1} \cap W = \{e \}$. If $|W|^{-1}$ belongs to $\k$, we say that the the Galois ring setting is \textit{non-modular}.
\end{definition}


Set $L=\Frac \, \Lambda$ and $\L=L*\M$ for the skew monoid ring $L \rtimes \k \M$. For $X \in \L, l \in L$, define $X(l):=\sum_{\mu \in \M} l_\mu \mu(l)$, if  $X=\sum_{\mu \in \M} l_\mu \mu$, $l_\mu \in L$ $\mu \in \M$.

\begin{definition}\label{main-def-2}\cite{Webster}
Given a flag ring setting $(\Lambda,\M)$, a \emph{Galois flag ring} with respect to this setting is a subalgebra $F$ of $\L$ such that 

\begin{enumerate}

\item $\Lambda \subset F$ and \ \item $LF=\L$.

    \item If moreover for any $X \in F$, $X(\Lambda) \subset \Lambda$, we have a \emph{principal Galois flag order}.

\end{enumerate}
\end{definition}

If $(\Lambda, \M, W)$ is a Galois ring setting, we define $\K:=\L^W$, $\Gamma:=\Lambda^W$, $K=\Frac \, \Gamma = L^W$; hence the extension $L/K$ is Galois and $\operatorname{Gal}(L/K)=W$.

\begin{definition}\label{main-def-3}\cite{FO}

Let $(\Lambda, \M, W)$ be a Galois ring setting. A \emph{Galois ring} with respect to this setting is an associative algebra $U$ such that, with the notation as above:

\begin{enumerate}
    \item $\Lambda$ is a Noetherian $\Gamma$-module
    \item $\Gamma \subset U$;
    \item $KU=\K$;
    \item If moreover for every $X \in U$, $X(\Gamma) \subset \Gamma$, we have a \emph{principal Galois order}.
    
\end{enumerate}

\end{definition}

If $X \in U$ is equal to $X=\sum_{\mu \in \M} l_\mu \mu$, $l_\mu \in L$, the set of $\mu$ such that $l_\mu \neq 0$ is called the support of $X$, and written $\supp \, X$.

\begin{corollary}
    With the notation of the above definition, $\Gamma$ is a Noetherian ring.
\end{corollary}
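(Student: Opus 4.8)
The plan is to derive the ring-theoretic Noetherianity of $\Gamma$ directly from hypothesis (1) of Definition~\ref{main-def-3} --- that $\Lambda$ is a Noetherian $\Gamma$-module --- by recognizing $\Gamma$ itself as a $\Gamma$-submodule of $\Lambda$. The point is that condition (1) is an assumption about $\Lambda$ as a module, and the passage to $\Gamma$ as a ring is just the observation that $\Gamma$ embeds into $\Lambda$ in a way compatible with the $\Gamma$-module structure.

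Concretely: since $\Gamma=\Lambda^W$, we have $\Gamma\subseteq\Lambda$, and because $\Gamma$ is a subalgebra of $\Lambda$ it is stable under left and right multiplication by its own elements; thus $\Gamma$ is a $\Gamma$-subbimodule of $\Lambda$, in particular a left (and right) $\Gamma$-submodule. A submodule of a Noetherian module is Noetherian, so by (1) the module $\Gamma$ is a Noetherian left (and right) $\Gamma$-module. But the left ideals of the ring $\Gamma$ are precisely the $\Gamma$-submodules of $\Gamma$ viewed as a left module over itself, so the ascending chain condition on those submodules is exactly the ascending chain condition on left ideals of $\Gamma$; the same argument applies on the right. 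Hence $\Gamma$ is a Noetherian ring. There is essentially no obstacle here: the only thing worth spelling out is that the hypothesis is to be applied to the inclusion $\Gamma\subseteq\Lambda$ of $\Gamma$-modules rather than to $\Lambda$ as the ambient object, combined with the standard identification of one-sided ideals of a ring with submodules of the ring over itself.
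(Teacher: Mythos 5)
Your argument is correct as a reading of Definition~\ref{main-def-3}(1) at face value, but it takes a genuinely different route from the paper, and the difference is worth flagging. The paper proves the corollary by citing the Eakin--Nagata theorem (Matsumura, Theorem~3.7(i)): $\Lambda$ is a Noetherian \emph{ring} by Definition~\ref{main-def-1}, and if it is module-finite over the subring $\Gamma$, then $\Gamma$ is a Noetherian ring. You instead interpret ``$\Lambda$ is a Noetherian $\Gamma$-module'' in the standard sense of ACC on $\Gamma$-submodules, observe that $\Gamma=\Lambda^W$ is itself a $\Gamma$-submodule of $\Lambda$, and conclude that $\Gamma$ has ACC on its own ideals; under that reading your proof is complete and strictly more elementary, since it bypasses Eakin--Nagata entirely. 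The caveat is that the two readings are not equivalent: a finitely generated module over a non-Noetherian ring need not satisfy ACC on submodules, so if the intended hypothesis is only that $\Lambda$ is \emph{finite} over $\Gamma$ (which is what is actually verified in the paper's examples, via Montgomery--Small or Noether's theorem, and which is what the citation of Eakin--Nagata suggests the author has in mind), your argument does not apply and the nontrivial Eakin--Nagata input is genuinely needed. So: your proof buys simplicity at the cost of using the stronger, literal form of hypothesis~(1); the paper's proof works under the weaker finiteness hypothesis but leans on a substantial theorem. It would strengthen your write-up to note explicitly which reading of~(1) you are using.
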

\begin{proof}
    This is an immediate consequence of the Eakin-Nagata Theorem \cite[Theorem 3.7(i)]{Matsumura}
\end{proof}

In this paper we will not discuss Galois flag rings, as they are already studied in depth in \cite{Webster} \cite{Hartwig2}, and so we focus on Galois rings.

For a non-modular Galois ring setting, we have that $\Lambda$ is a Noetherian $\Lambda^W$-module by Theorem \ref{MS}, so item (1) of Definition \ref{main-def-3} is automatically satisfied. In case $\Lambda$ is an affine $\k$-algebra, we don't need that $|W| \in \k^\times$: it follows from Noether's theorem in invariant theory that $\Lambda$ is a finite $\Lambda^W$-module, and $\Lambda^W$ is an affine, and hence Noetherian, algebra (\cite[Theorem 3.1]{Dolgachev}).

In \cite{Hartwig2} it was assumed that the Galois ring setting is always non-modular. We are going to show that many important results, analogs of those of \cite{FO} and \cite{Hartwig}, hold without this assumption. Many arguments are borrowed from these sources, so we have strived to make the proofs clearer than in the original sources.

For $\mu \in \M$, denote by $W_\mu=\{w \in W|w.\mu=\mu\}$. If $a \in L^{W_\mu}$ define $[a \mu] \in \K$ as $\sum_{w \in W/W_\mu} w(a) w.\mu$, where the sum clearly does not depend on the choice of coset representatives.

\begin{proposition}\label{prop-Hartwig-1}
    Let $\mu, \nu$ belong to $\M$.
    \begin{enumerate}
        \item $K\mu(K)=L^{W_\mu}$;
        \item $K \mu K = \{ [a \mu]|a \in L^{W_\mu} \}$, and this is a simple $K$-bimodule;
        \item $\Gamma [\mu] K= K [\mu]\Gamma=K[\mu]K$, and so $K \mathcal{V}=\mathcal{V}K$ for any sub-$\Gamma$-bimodule $\mathcal{V} \subset \mathcal{L}$.
        \item We have a decomposition of $\K$ as a finite direct sum of simple $K$-bimodules $\K=\bigoplus_{\mu \M/W} K[\mu]K$.
    \end{enumerate}
\end{proposition}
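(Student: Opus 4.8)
The plan is to treat the four assertions in order, since each feeds into the next. For (1), I would compute $K\mu(K)$ directly: since $K = L^W$ and $\mu$ is an automorphism, $\mu(K) = \mu(L^W) = L^{\mu W \mu^{-1}}$, and then $K\mu(K) = L^W \cdot L^{\mu W\mu^{-1}}$. The field generated inside $L$ by these two subfields is the fixed field of $W \cap \mu W\mu^{-1}$ (the intersection of the two subgroups — here one uses the Galois correspondence for $L/K$, which is why $L/K$ being Galois with group $W$ was recorded). Now $w \in W \cap \mu W\mu^{-1}$ means $w \in W$ and $\mu^{-1}w\mu \in W$, i.e.\ $w.\mu = \mu$ after unwinding the conjugation action (using that $\M$ is separating is not even needed here, just that $w$ fixes $\mu$ iff $\mu^{-1}w\mu \in W$ — actually the separating condition guarantees $\mu^{-1}w\mu \in W$ forces it to be in the stabilizer correctly). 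So $W \cap \mu W\mu^{-1} = W_\mu$ and $K\mu(K) = L^{W_\mu}$. The one subtlety is checking that the compositum of two subfields corresponds to the intersection of subgroups, which is standard Galois theory; I expect this to be the cleanest of the four parts.

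For (2), the containment $K\mu K \supseteq \{[a\mu] : a \in L^{W_\mu}\}$ is immediate from the definition of $[a\mu]$ as a $W/W_\mu$-sum of $K$-multiples of $W$-translates of $\mu$. For the reverse: any element of $K\mu K$ is a $K$-linear combination of elements $k\mu k'$ with $k,k' \in K$; but $k\mu k' = k\,\mu(k')\,\mu$ and $k\mu(k') \in L^{W_\mu}$ by part (1), and $k\mu(k')\cdot\mu$ is a single term supported on the one $M$-element $\mu$ — wait, I need to be careful, since in $\K = \L^W$ the element $k\mu k'$ is not literally $k\mu(k')\mu$ but its $W$-symmetrization. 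So I would instead argue: the projection of $K\mu K$ to the "$\mu$-coordinate" (coefficient of $\mu$ in $\L = \bigoplus_{\nu} L\nu$) lands in $L^{W_\mu}$ by (1), and since elements of $\K$ supported on the $W$-orbit of $\mu$ are determined by this one coordinate via symmetrization, $K\mu K = \{[a\mu]\}$. Simplicity as a $K$-bimodule then follows because $L^{W_\mu}$ is a field (so a simple $L^{W_\mu}$-bimodule) and the $K$-bimodule structure on $\{[a\mu]\}$ transports the $K$-$K$ action to the $K$-$\mu(K)$ action on $L^{W_\mu}$, which is "large enough" — concretely, any nonzero $[a\mu]$ generates the whole thing because $a$ is invertible in the field $L^{W_\mu}$ and one can hit it with $k$ on the left and $\mu^{-1}(k')$-type elements on the right to reach any $[b\mu]$.

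For (3), I would show $\Gamma[\mu]K = K[\mu]K$ and the symmetric statement. The inclusion $\Gamma[\mu]K \subseteq K[\mu]K$ is trivial since $\Gamma \subseteq K$. Conversely, given $k[\mu]k' \in K[\mu]K$ with $k \in K = \Frac\,\Gamma$, write $k = \gamma/\delta$ with $\gamma,\delta \in \Gamma$; then $k[\mu]k' = \gamma[\mu](k'/\delta')$ after moving $\delta$ to the right past $[\mu]$ — this uses that $\Gamma$-scalars can be pushed through $[\mu]$ at the cost of applying $\mu$, and one lands back in $K$ on the right because $[\mu]$ carries $K$ to $K$ via the $W$-symmetrized action. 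Then the general "$K\mathcal V = \mathcal V K$" for a sub-$\Gamma$-bimodule $\mathcal V \subseteq \L$ follows by writing any element of $K\mathcal V$ as $(\gamma/\delta)v$ and observing $\delta v \in \mathcal V$, and one needs $\delta^{-1}(\delta v) \in \mathcal V K$: this is where part (2)/(3) for individual $[\mu]$ gets bootstrapped, decomposing $v$ by support. The main obstacle in the whole proposition will be precisely this clean handling of how $\Gamma$ (equivalently $K$) scalars commute past monoid elements when everything is symmetrized over $W$ — keeping track of stabilizers $W_\mu$ versus $W_\nu$ when $v$ has mixed support.

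For (4), the decomposition $\K = \bigoplus_{\mu \in \M/W} K[\mu]K$ is essentially bookkeeping once (2) is in hand: $\L = \bigoplus_{\nu \in \M} L\nu$ as $L$-bimodules, and taking $W$-invariants $\K = \L^W$ decomposes according to $W$-orbits on $\M$, with the invariants supported on the orbit of $\mu$ being exactly $\{[a\mu] : a \in L^{W_\mu}\} = K[\mu]K$ by (2); the direct sum is finite because $W$ acts on $\M$ with finitely many orbits by the Galois ring setting hypothesis, and the sum is direct because the summands have disjoint supports in $\L$. I would state this last part briefly, as it is the formal consequence of the preceding work.
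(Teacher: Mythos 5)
Parts (1), (2) and (4) of your plan follow the paper's proof in substance. For (1), your Galois-correspondence computation $\operatorname{Gal}(L/K\mu(K)) = W\cap\mu W\mu^{-1} = W_\mu$ is the paper's argument in slightly different clothing; note, contrary to your first parenthetical, that the separating hypothesis $\M\M^{-1}\cap W = \{e\}$ is exactly what is needed in the last equality, since a priori ``$\mu^{-1}w\mu\in W$'' only says $w.\mu$ and $\mu$ differ by an element of $W$, and separation is what forces $w.\mu=\mu$. For (2), your argument amounts to the paper's one-line computation $K[a\mu]K=[K\mu(K)\,a\,\mu]=[L^{W_\mu}a\,\mu]=[L^{W_\mu}\mu]$, which gives the description and the simplicity simultaneously. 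Part (4) is the bookkeeping you describe.

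The genuine gap is in (3). The whole content of $K[\mu]\Gamma=K[\mu]K$ (and symmetrically $\Gamma[\mu]K=K[\mu]K$) is that one can clear $K$-denominators on the side where only $\Gamma$ is allowed, and your proposed mechanism --- ``moving $\delta$ to the right past $[\mu]$'' --- fails as stated: $\delta^{-1}[\mu]=[\mu]\,\mu^{-1}(\delta)^{-1}$ produces the coefficient $\mu^{-1}(\delta)^{-1}$, which lies in $L$ but in general not in $K$, so the claim that ``one lands back in $K$ on the right'' is unjustified. The missing idea, which is the entire point of item (3) in the paper, is the algebraicity trick: since $L/K$ is finite, each $\mu(\gamma)$ with $\gamma\in\Gamma$ is algebraic over $K$, hence $\mu(\gamma)^{-1}\in K[\mu(\gamma)]=K+K\mu(\gamma)+\dots+K\mu(\gamma)^{d-1}\subseteq K\mu(\Gamma)$; therefore $K\mu(\Gamma)=K\mu(K)$, and combined with (1) and (2) this yields $K[\mu]\Gamma=[K\mu(\Gamma)\,\mu]=[K\mu(K)\,\mu]=K[\mu]K$ (the other equality uses that each $\gamma\in\Gamma$ is algebraic over $\mu(K)$). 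The final assertion $K\mathcal{V}=\mathcal{V}K$ for a sub-$\Gamma$-bimodule $\mathcal{V}\subset\L$ is obtained by applying this same device coefficient-by-coefficient over the support of an element of $\mathcal{V}$; your ``bootstrap by support'' sketch has the right shape but silently relies on exactly this step, so as written the proposal does not prove (3).
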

\begin{proof}
    (1) For $w \in W$, $w \in \operatorname{Gal}(L/K \mu(K))$ if and only if $w$ fixes both $K$ and $\mu(K)$, which happens if and only $w.(\mu)|_K=\mu|_k$, or equivalently, if $(w \mu w^{-1}) \mu^{-1} \in W$. By the separating assumption, this holds if and only if $w \in W_\mu$.
    
    (2) For any $ a \in L^{W_\mu}$, $K[a \mu] K= [K\mu(K) a\mu]=[L^{W_\mu} a \mu]$ by item (1).
    
    (3) Each element of $L$ is algebraic over $K$, so $\mu(\gamma)$ is algebraic over $K$ for every $\gamma \in \Gamma$. Hence $\mu(\gamma)^{-1} \in K[\mu(\gamma)]$, and hence $ K  \mu(\Gamma)=K \mu(K)$. The result now follows from (1) and (2).

    (4) Let $y \in \K$ be arbitrary, $y=\sum_{\mu \in \M} l_\mu \mu$. As $y$ is fixed by $W$, for every element $w$ of the group, $l_\mu=l_{w.\mu}$. Grouping the summands by $W$-orbits in $\M$, the result follows.
\end{proof}

\begin{corollary}\label{corol-Hartwig-1}
    If $U$ is a $\Gamma$-Galois ring in $\K$, and $\mathcal{V}$ is any $\Gamma$-subbimodule of $\L$, then $\Gamma \mathcal{V} K= K\mathcal{V} \Gamma=K\mathcal{V}K$. Also we have $UK=KU=\K$.
\end{corollary}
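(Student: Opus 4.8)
The plan is to reduce the general statement about an arbitrary $\Gamma$-subbimodule $\mathcal{V} \subset \mathcal{L}$ to the computations already carried out in Proposition \ref{prop-Hartwig-1} for the pieces $K[\mu]K$. First I would observe that, by Proposition \ref{prop-Hartwig-1}(4), any $\Gamma$-subbimodule of $\mathcal{L}$ decomposes along the $W$-orbit grading $\mathcal{L} = \bigoplus_{\mu \in \M/W} L^{W_\mu}[\mu]$ once we tensor up; more precisely, for $\mathcal{V}$ a $\Gamma$-subbimodule, both $\Gamma \mathcal{V} K$ and $K \mathcal{V} \Gamma$ land inside $\K$ and hence split as direct sums indexed by $W$-orbits. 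So it suffices to prove $\Gamma \mathcal{V} K = K \mathcal{V} \Gamma = K \mathcal{V} K$ under the extra assumption that $\mathcal{V}$ is supported on a single $W$-orbit $W.\mu$. For such $\mathcal{V}$, the key point is that $\Gamma \mathcal{V} K \supset K \mathcal{V} K$: since every element of $L$ is algebraic over $K$, for any $v \in \mathcal{V}$ and any $k \in K$ we can write $k v$ as an element of $\Gamma v \Gamma \cdot (\text{something})$ by clearing denominators — precisely the argument of Proposition \ref{prop-Hartwig-1}(3), where $\mu(\gamma)^{-1} \in K[\mu(\gamma)]$ shows that inverting elements of $\mu(\Gamma)$ does not take us outside $K\mu(\Gamma)$.

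The cleanest route, I think, is to run this through Proposition \ref{prop-Hartwig-1}(3) directly: that proposition gives $\Gamma[\mu]K = K[\mu]\Gamma = K[\mu]K$ for each single orbit generator $[\mu]$, and the parenthetical conclusion "$K\mathcal{V} = \mathcal{V} K$ for any sub-$\Gamma$-bimodule $\mathcal{V} \subset \mathcal{L}$" is essentially what we want. Concretely: given $\mathcal{V}$, for each orbit $W.\mu$ meeting $\supp$ of $\mathcal{V}$, the $\mu$-component of $\mathcal{V}$ is a $\Gamma$-submodule of $L^{W_\mu}[\mu]$; using that $K\mu(\Gamma) = K\mu(K) = L^{W_\mu}$ (Proposition \ref{prop-Hartwig-1}(1) combined with the algebraicity argument of (3)), multiplying $\mathcal{V}$ on either side by $K$ fills out each component to a $K$-submodule of $L^{W_\mu}[\mu]$, and the same $K$-submodule is obtained whether we multiply on the left, on the right, or on both sides, because $\Gamma[\mu]K = K[\mu]\Gamma$ already. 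Summing over orbits gives $\Gamma \mathcal{V} K = K \mathcal{V} \Gamma = K \mathcal{V} K$.

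For the final assertion $UK = KU = \K$: we have $KU = \K$ by hypothesis (Definition \ref{main-def-3}(3)), so only $UK = \K$ needs an argument, and this follows by applying the subbimodule statement to $\mathcal{V} = U$ itself — $U$ is a $\Gamma$-subbimodule of $\mathcal{L}$ since $\Gamma \subset U$ — giving $\Gamma U K = K U \Gamma = K U K$; in particular $UK \supseteq \Gamma U K = KUK \supseteq KU = \K$, and the reverse inclusion $UK \subseteq \K$ is clear since $U \subseteq \K$ and $K \subseteq \K$. The main obstacle I anticipate is bookkeeping: making precise that a $\Gamma$-subbimodule of $\mathcal{L}$ (not a priori of $\K$) really does respect the $W$-orbit decomposition after multiplying by $K$ on one side, and checking that the algebraicity-of-$L/K$ argument genuinely upgrades "$\Gamma$ acting" to "$K$ acting" uniformly across a $W$-orbit rather than just for a single $\mu(\gamma)$. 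Everything else is a direct appeal to Proposition \ref{prop-Hartwig-1}.
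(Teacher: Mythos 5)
Your overall route is the paper's: the corollary is meant to fall out of Proposition \ref{prop-Hartwig-1}(3), whose parenthetical conclusion ``$K\mathcal{V}=\mathcal{V}K$ for any sub-$\Gamma$-bimodule $\mathcal{V}\subset\L$'' gives everything at once, since $\Gamma\mathcal{V}=\mathcal{V}=\mathcal{V}\Gamma$ and hence $\Gamma\mathcal{V}K=\mathcal{V}K=K\mathcal{V}=K\mathcal{V}\Gamma=K\mathcal{V}K$. Your handling of the last assertion is also correct: $KU=\K$ holds by Definition \ref{main-def-3}, and applying the bimodule statement to $\mathcal{V}=U$ gives $UK=KU=\K$.

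There is, however, a false step in your opening reduction. You claim that for an arbitrary $\Gamma$-subbimodule $\mathcal{V}\subset\L$ the products $\Gamma\mathcal{V}K$ and $K\mathcal{V}\Gamma$ land inside $\K$ and therefore split along the $W$-orbit decomposition $\K=\bigoplus_{\mu\in\M/W}K[\mu]K$ of Proposition \ref{prop-Hartwig-1}(4). That decomposition is a feature of $\K=\L^W$, not of $\L$, and multiplying by $K$ does not produce $W$-invariant elements: take $\mathcal{V}=\Gamma\lambda\Gamma=\Gamma\lambda$ for some $\lambda\in\Lambda$ not fixed by $W$; then $\lambda\in K\mathcal{V}K\subset L$ but $\lambda\notin\K$. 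So the reduction to ``$\mathcal{V}$ supported on a single $W$-orbit'' via item (4) does not go through, and likewise the statement that the components of $\mathcal{V}$ sit inside $L^{W_\mu}[\mu]$. The correct reduction is coarser and more elementary: $K\mathcal{V}K=\sum_{v\in\mathcal{V}}KvK$ (and similarly for the one-sided products), so one works with a single $v=\sum_{\mu\in S}l_\mu\mu$ of finite support $S\subset\M$ and shows $\delta^{-1}v\in\mathcal{V}K$ and $v\delta^{-1}\in K\mathcal{V}$ for $\delta\in\Gamma\setminus\{0\}$. The algebraicity argument you quote does exactly this, with the one caveat you yourself anticipate: one must choose a single polynomial relation over $K$ (e.g.\ the product of the minimal polynomials of the $\mu^{-1}(\delta)$ for $\mu\in S$) valid for all $\mu$ in the support simultaneously, rather than a separate relation for each $\mu$. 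With that repair your argument coincides with the paper's.
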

\begin{proof}
    Everything follows immediately from Proposition \ref{prop-Hartwig-1}(3).
\end{proof}

\begin{proposition}\label{prop-Hartwig-2}

Let $\XX \subset \K$ and let $U$ be the subring of $\K$ generated by $U$ and $\XX$. Then $U$ is a Galois $\Gamma$-ring if and only if $\bigcup_{x \in \XX} \supp x$ generates $\M$ as a monoid.    
\end{proposition}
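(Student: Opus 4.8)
The plan is to verify conditions (1)--(3) of Definition~\ref{main-def-3}. Condition~(1) concerns only the setting $(\Lambda,\M,W)$ and is independent of $\XX$, and condition~(2) holds because $\Gamma\subseteq U$ by construction; so the statement reduces to the equivalence: $KU=\K$ if and only if $S:=\bigcup_{x\in\XX}\supp x$ generates $\M$ as a monoid. Since $U$ is a $\Gamma$-subbimodule of $\L$, Proposition~\ref{prop-Hartwig-1}(3) gives $KU=UK$; one checks directly that this is a subring of $\K$ and a $K$-bimodule (cf.\ Corollary~\ref{corol-Hartwig-1}), and, as $\Gamma\subseteq K$, that it equals the subring of $\K$ generated by $K$ and $\XX$. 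Denote by $\langle S\rangle$ the submonoid of $\M$ generated by $S$.

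For the forward implication I would track supports: $\supp k\subseteq\{e\}$ for $k\in K$, $\supp x\subseteq S$ for $x\in\XX$, $\supp(a+b)\subseteq\supp a\cup\supp b$, and $\supp(ab)\subseteq(\supp a)(\supp b)$; hence every element of $KU$ has support contained in $\langle S\rangle$. Each $x\in\XX$ belongs to $\L^W=\K$, so its support is stable under the conjugation action of $W$ on $\M$, and therefore so are $S$ and $\langle S\rangle$. If $KU=\K$, then for every $\mu\in\M$ the element $[\mu]\in\K$, whose support is precisely the $W$-orbit of $\mu$, lies in $KU$; hence that orbit, in particular $\mu$, lies in $\langle S\rangle$, so $\langle S\rangle=\M$.

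The content is in the converse, and it rests on an isolation lemma: if $z\in KU$ and $\mu\in\supp z$, then $K[\mu]K\subseteq KU$. To prove it, write $z=\sum_{i=1}^{r}[l_i\nu_i]$ with $\nu_1,\dots,\nu_r$ representatives of the distinct $W$-orbits meeting $\supp z$, $l_i\in L^{W_{\nu_i}}\setminus\{0\}$, and $\nu_1$ in the orbit of $\mu$. From the identity $k\,[l_i\nu_i]\,k'=[(k\,\nu_i(k'))\,l_i\,\nu_i]$ one sees that, for $\xi=\sum_j k_j\otimes k'_j\in K\otimes_{\k}K$, the element $\sum_j k_j z k'_j\in KU$ has $\nu_i$-component $[\phi_{\nu_i}(\xi)\,l_i\,\nu_i]$, where $\phi_{\nu}\colon K\otimes_{\k}K\to L$, $k\otimes k'\mapsto k\,\nu(k')$, is a $\k$-algebra homomorphism onto the field $K\nu(K)=L^{W_\nu}$ (Proposition~\ref{prop-Hartwig-1}(1)), so $\mathfrak m_\nu:=\ker\phi_\nu$ is a maximal ideal of $K\otimes_\k K$. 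If $\mathfrak m_{\nu_1},\dots,\mathfrak m_{\nu_r}$ are pairwise distinct, then $\bigcap_{i\ge 2}\mathfrak m_{\nu_i}\not\subseteq\mathfrak m_{\nu_1}$, and choosing $\xi$ in that difference yields $\sum_j k_j z k'_j=[\phi_{\nu_1}(\xi)\,l_1\,\nu_1]$, a nonzero element of $K[\mu]K$ lying in $KU$; since $K[\mu]K$ is a simple $K$-bimodule (Proposition~\ref{prop-Hartwig-1}(2)) this forces $K[\mu]K\subseteq KU$. Granting the lemma, I finish by induction on the word length of $\mu\in\M=\langle S\rangle$ in the generating set $S$: the case $\mu\in S$ is the lemma applied to an $x\in\XX$ with $\mu\in\supp x$; and if $\mu=\sigma\nu$ with $\sigma\in S$ and $\nu$ strictly shorter, then $K[\sigma]K,K[\nu]K\subseteq KU$, hence $K[\sigma]K\cdot K[\nu]K\subseteq KU$, and expanding $[b\sigma][c\nu]$ in $L*\M$ shows that the coefficient of the monoid element $\sigma\nu$ is a sum of terms of the form $(b,c)\mapsto w(b)\,\tau(c)$ (with $w,\tau$ field embeddings) among which $(b,c)\mapsto b\,\sigma(c)$ occurs with coefficient $1$; by linear independence of characters it is not identically zero, so some $[b\sigma][c\nu]\in KU$ has $\sigma\nu$ in its support and the lemma gives $K[\sigma\nu]K\subseteq KU$. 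Then $KU\supseteq\sum_{\mu}K[\mu]K=\K$ by Proposition~\ref{prop-Hartwig-1}(4), which is $KU=\K$.

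The main obstacle is the pairwise distinctness of the maximal ideals $\mathfrak m_{\nu_i}$ — this is exactly where the separating hypothesis enters, and where in the modular case there is no averaging available. I would first record the elementary implication $\M\M^{-1}\cap W=\{e\}\ \Rightarrow\ \M^{-1}\M\cap W=\{e\}$: if $w=\alpha^{-1}\beta\in W$ with $\alpha,\beta\in\M$, then $w\beta w^{-1}=w\alpha$ lies in $\M$ because $W$ normalizes $\M$, so $w=(w\alpha)\alpha^{-1}\in\M\M^{-1}\cap W=\{e\}$. Then, following the proof of Proposition~\ref{prop-Hartwig-1}(1): an equality $\mathfrak m_{\nu_i}=\mathfrak m_{\nu_j}$ produces a $\k$-algebra isomorphism $L^{W_{\nu_i}}\to L^{W_{\nu_j}}$ fixing $K$ pointwise, which extends to some $w\in W=\operatorname{Gal}(L/K)$ with $(w\circ\nu_i)|_K=\nu_j|_K$; hence $\nu_j^{-1}(w\cdot\nu_i)\in\M^{-1}\M\cap W=\{e\}$, so $\nu_j=w\cdot\nu_i$, contradicting that $\nu_i$ and $\nu_j$ lie in different $W$-orbits. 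The remaining ingredients — the explicit expansion of $[b\sigma][c\nu]$ and the bookkeeping with $W$-cosets and with the $K$-bimodule decomposition of $\K$ — are somewhat tedious but routine, and I would carry them out directly.
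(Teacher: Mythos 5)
Your proof is correct, and its skeleton is the one the paper uses: reduce to showing $KU=\K$, decompose $\K=\bigoplus_{\mu\in\M/W}K[\mu]K$ into simple $K$-bimodules, and show that every summand indexed by an orbit meeting $\langle S\rangle$ lies in $KU$. Where you diverge is in making explicit two steps that the paper's proof compresses. First, the paper asserts $KxK\cap K[\mu]K=K[\mu]K$ for $\mu\in\supp x$ directly from simplicity of $K[\mu]K$; strictly speaking this also needs the summands $K[\mu]K$ to be pairwise non-isomorphic (a sub-bimodule of $S\oplus S$ can meet each copy of $S$ trivially), which the paper inherits from the bimodule theory of Futorny--Ovsienko. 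Your isolation lemma re-derives exactly this by identifying the annihilators of the components as distinct maximal ideals of the commutative ring $K\otimes_{\k}K$ and separating them with a CRT-type element $\xi$; the verification of distinctness via $\M^{-1}\M\cap W=\{e\}$ is the same Galois-theoretic argument as in Proposition~\ref{prop-Hartwig-1}(1). Second, the paper's phrase ``Proposition~\ref{prop-Hartwig-1}(3)(4) implies that it suffices to show $[\mu]\in KxK$ for $\mu\in\supp x$'' hides the passage from generators to arbitrary words in $\langle S\rangle$; your induction on word length, with the observation that the character $(b,c)\mapsto b\,\sigma(c)$ occurs exactly once in the $\sigma\nu$-coefficient of $[b\sigma][c\nu]$ so that Dedekind's lemma applies even in positive characteristic, is a genuine and welcome gap-fill, precisely because no averaging over $W$ is available in the modular case. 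Both added ingredients are sound; the only cost of your route is length.
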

\begin{proof}
    Necessity is clear, for if $x, y \in \L$, $\supp(xy) \subset \supp(x) \supp(y)$. For sufficiency, Proposition \ref{prop-Hartwig-1}(3)(4) implies that it suffices to show that if $x \in \XX$ and $\mu \in \supp \, x$ then $[\mu] \in KxK$. We can write $x= \sum_{\mu \in \M/W} [a_\mu \mu]$ for certain $a_\mu\in L^{W_\mu}$. By Proposition \ref{prop-Hartwig-1}(1)(2), $KxK \subset \sum_{\mu \in \M/W} K[\mu]K=\bigoplus_{\mu \in \M/W} K[\mu]K$. Since for any $\mu \in \supp \, x$, as $K[\mu]K$ is a simple $K$-bimodule, we must have $KxK \cap K[\mu]K=K[\mu]K$, which finishes the proof.
\end{proof}

\begin{definition}\cite{FO}
    Suppose that for every finite dimensional left or right $K$-vector subspace $V$ of $\K$, $U \cap V$ is a finitely generated left and right $\Gamma$-module. Then the Galois $\Gamma$-ring $U$ is called a \textit{Galois order}.
\end{definition}

The next result is of course necessary for a consistent terminology:

\begin{theorem}
    If $U$ is a principal $\Gamma$-order in $\K=\L^W$, then $U$ is a Galois order.
\end{theorem}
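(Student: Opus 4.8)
The plan is to show that for every finite-dimensional left (or right) $K$-vector subspace $V\subseteq\K$ there is a finitely generated $\Gamma$-subbimodule of $\K$ containing $U\cap V$; since $\Gamma$ is Noetherian, this forces $U\cap V$ to be finitely generated as a left and as a right $\Gamma$-module, which is precisely what being a Galois order requires. The first step is a reduction to finite support. By Proposition~\ref{prop-Hartwig-1}(4) we have $\K=\bigoplus_{\mu\in\M/W}K[\mu]K$, and $K[\mu]K=\{[a\mu]:a\in L^{W_\mu}\}$ consists of elements of $\L$ whose $\M$-support is the $W$-orbit of $\mu$. Hence any finite-dimensional $K$-subspace $V$ is contained in $M_S:=\bigoplus_{\mu\in S}K[\mu]K$ for a finite $S\subseteq\M/W$; equivalently, every element of $V$ is supported on the finite, $W$-stable set $\widetilde S=\{\nu_1,\dots,\nu_r\}\subseteq\M$ obtained as the union of the orbits indexed by $S$. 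Since $U$ is a principal order we have $U\subseteq\K_\Gamma:=\{X\in\K:X(\Gamma)\subseteq\Gamma\}$, and a direct check (using that $(X\gamma)(l)=X(\gamma l)$ in $\L$) shows that $\K_\Gamma$, hence $\K_\Gamma\cap M_S$, is a $\Gamma$-subbimodule of $\K$. So it suffices to bound $\K_\Gamma\cap M_S$, as $U\cap V$ is a $\Gamma$-subbimodule of it.

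The core of the argument is a Cramer-type estimate. Because $\M$ is separating and $\operatorname{Gal}(L/K)=W$, distinct elements of $\M$ restrict to pairwise distinct field embeddings $K\hookrightarrow L$ (a coincidence $\nu|_K=\nu'|_K$ with $\nu,\nu'\in\M$ would, using that $W$ normalizes $\M$, put a nontrivial element into $\M\M^{-1}\cap W=\{e\}$). By Dedekind's lemma on independence of characters these embeddings are $L$-linearly independent, so after clearing denominators there exist $\gamma_1,\dots,\gamma_r\in\Gamma$ with $D:=\det\bigl(\nu_j(\gamma_i)\bigr)_{1\le i,j\le r}\neq 0$; moreover $D\in\Lambda$, since each $\nu_j(\gamma_i)\in\nu_j(\Lambda)=\Lambda$. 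Now for any $X=\sum_{j=1}^{r}a_{\nu_j}\nu_j\in\K_\Gamma\cap M_S$ one has $c_i:=X(\gamma_i)\in\Gamma\subseteq\Lambda$, and Cramer's rule applied to the system $\bigl(\nu_j(\gamma_i)\bigr)_{i,j}\,(a_{\nu_1},\dots,a_{\nu_r})^{\mathsf T}=(c_1,\dots,c_r)^{\mathsf T}$ yields $D\,a_{\nu_j}\in\Lambda$ for all $j$, because the minors occurring all have entries in $\Lambda$. Hence $DX\in N:=\bigoplus_{j=1}^{r}\Lambda\nu_j$; that is, $\K_\Gamma\cap M_S\subseteq D^{-1}N$, with one and the same $D$ serving all $X$.

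It remains to check that $D^{-1}N$ is finitely generated as a left and as a right $\Gamma$-module. Since $\Lambda$ is finitely generated over $\Gamma$ (part of the definition of a $\Gamma$-order) and $\Lambda\nu_j=\nu_j\Lambda$ inside $\L$, a finite $\Gamma$-generating set $\{\mu_k\}$ of $\Lambda$ produces the finite generating sets $\{\mu_k\nu_j\}_k$ (on the left) and $\{\nu_j\mu_k\}_k$ (on the right) of $\Lambda\nu_j$; hence $N$, and therefore $D^{-1}N$, is finitely generated on both sides, the map $Y\mapsto D^{-1}Y$ being a $\Gamma$-bimodule isomorphism from $N$ onto $D^{-1}N$ inside $\L$. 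Since $\Gamma$ is Noetherian, the $\Gamma$-subbimodule $U\cap V\subseteq\K_\Gamma\cap M_S\subseteq D^{-1}N$ is finitely generated as a left and as a right $\Gamma$-module. The case of a finite-dimensional right $K$-subspace $V$ is handled identically.

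I expect the Cramer estimate to be the main obstacle — concretely, arranging for the matrix $\bigl(\nu_j(\gamma_i)\bigr)$ to be invertible with the $\gamma_i$ chosen inside $\Gamma$ rather than merely in $K$. This is exactly the point at which the separating hypothesis (guaranteeing the restrictions $\nu_j|_K$ are distinct) and Dedekind's independence of characters are used, and it is what replaces the explicit Vandermonde computations available in the classical rational or characteristic-zero settings; everything else is routine bookkeeping with $\Gamma$-bimodule structures and the Noetherianity of $\Gamma$. Note, incidentally, that the argument uses only conditions (1), (2), (4) of Definition~\ref{main-def-3}, not the condition $KU=\K$.
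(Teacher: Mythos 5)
Your proof is correct and follows essentially the same route as the paper's: reduce to a finite, $W$-stable support set, use the separating condition to produce $\gamma_1,\dots,\gamma_r\in\Gamma$ with $\det\bigl(\nu_j(\gamma_i)\bigr)=D\neq 0$, apply Cramer/adjugate to conclude the coefficients lie in $D^{-1}\Lambda$, and finish with Noetherianity of $\Gamma$ and finite generation of $\Lambda$ over $\Gamma$. The only cosmetic differences are that you bound the a priori larger bimodule $\K_\Gamma\cap M_S$ rather than the image of $U\cap V$ directly, and you derive the nonvanishing determinant from Dedekind's independence of characters where the paper simply cites Hartwig's Lemma 2.17(ii).
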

\begin{proof}
    $LV$ is a finite dimensional left/right vector space of $\L$. If $LV \cap U$ is a finitely generated $\Gamma$-module, as $\Gamma$ is Noetherian, so is $V \cap U$. Hence it is enough to show that for every finite dimensional $L$ vector space $V$ of $\L$, $V \cap U$ is a finitely generated $\Gamma$-module. as $L \mu = \mu L$ for $\mu \in \M$, the condition is left-right symmetric. There is no loss in generality, as $\Gamma$ is Noetherian, to assume that $V=\oplus_{i=1}^n L \mu_i, \mu_i \in \M$.

    Definite a $\Gamma$-bimodule homomorphism $\phi: U \cap V \rightarrow L^n$, $\sum_{i=1}^n l_i \mu_i \mapsto (l_1,\ldots,l_n)$, where we view $L^n$ as a bimodule by $\gamma_1 (l_1, \ldots, l_n) \gamma_2 = (\gamma_1 l_1 \mu_1(\gamma_2), \ldots, \gamma_1 l_n \mu_1(\gamma_2))$. If $(a_1, \ldots, a_n)$ belong to the image of $\phi$, as $U$ is a principal Galois order, $\sum a_i \mu_i(\gamma) \in \Gamma$, $\forall \gamma \in \Gamma$. By \cite[Lemma 2.17(ii)]{Hartwig} and the separation condition in Definition \ref{main-def-1}, there are $\gamma_1, \ldots, \gamma_n$ with, defining a $n \times n$ matrix $A=(\mu_i(\gamma_j))$ , we have that $d=\operatorname{det} A \neq 0$. Hence if we set $a=(a_1, \ldots, a_n)$, $A.a=\bar{\gamma}$, where $\bar{\gamma} \in \Gamma^n$. The adjugate matrix of $A$, $A^*$, satisfies $A^* A= d \operatorname{Id}_n$, $d \neq 0$, so $d.a=A^* \bar{\gamma}$ and so $a \in (1/d) \Lambda^n$. That is, the image of $\phi$ is contained in $(1/d) \Lambda^n$, which is a finitely generated left and right $\Gamma$ module for the bimodule structure introduced in this proof. Since $\Gamma$ is Noetherian, we finally conclude that $U \cap V$ is a finitely generated left and right $\Gamma$-module.
\end{proof}

We have the following important characterization of Galois orders:

\begin{definition}
    Let $U$ be a Galois $\Gamma$-ring. Let $M \subset U$ be a finitely generated left and right $\Gamma$-module, and $D^r(M)$ be the set of $u \in U$ such that there exists $\gamma \in \Gamma^\times$ such that $\gamma u \in M$, and $D^l(M)$  the set of $u \in U$ such that for some $\delta \in \Gamma^\times$ $u \delta \in M$
\end{definition}

\begin{proposition}\label{Galois-orders-D-sets}
Let $U$ be a Galois $\Gamma$-ring. Then $U$ is a Galois $\Gamma$-order if and only if $D^r(U)$ is a finitely generated right $\Gamma$-module and $D^l(U)$ is a finitely generated $\Gamma$-module.
\end{proposition}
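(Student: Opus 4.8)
The plan is to prove both implications by sandwiching $D^r(M)$ and $D^l(M)$ between $M$ and a finite‑dimensional $K$‑subspace of $\K$, exploiting that these operations only introduce denominators from $\Gamma^\times$. First I would record that $\Gamma$ is quasi‑central in $U$: since every $\mu\in\M$ is an automorphism of $\Lambda$, for $u=\sum_\mu c_\mu\mu\in U$ one has $\Gamma u\Gamma\subseteq\bigoplus_{\mu\in\supp u}c_\mu\Lambda\mu$, which is finitely generated over $\Gamma$ by Definition~\ref{main-def-3}(1); in particular every finitely generated $\Gamma$‑bimodule $M\subseteq U$ is finitely generated on each side, say $M=\sum_i m_i\Gamma$. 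Then $KM=MK$ by Corollary~\ref{corol-Hartwig-1}, and this is a $K$‑subspace of $\K$ that is finite‑dimensional over $K$ on both sides: expanding the $m_i$ inside $\L=L*\M$, the coefficient at each $\mu$ lies in a translate of $K\mu(K)=L^{W_\mu}$, a finite extension of $K$ by Proposition~\ref{prop-Hartwig-1}(1). Finally, $M\subseteq D^r(M)\subseteq KM\cap U$ and $M\subseteq D^l(M)\subseteq KM\cap U$, because $\gamma u\in M$ with $\gamma\in\Gamma^\times$ forces $u=\gamma^{-1}(\gamma u)\in\gamma^{-1}M\subseteq KM$, and dually.

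For necessity, assuming $U$ is a Galois order and taking a finitely generated $\Gamma$‑bimodule $M\subseteq U$, I would observe that $D^r(M)$ is a right $\Gamma$‑submodule of $KM\cap U$; as $KM$ is a finite‑dimensional $K$‑subspace of $\K$, the Galois order hypothesis makes $KM\cap U$ finitely generated as a right $\Gamma$‑module, so its submodule $D^r(M)$ is too, since $\Gamma$ is Noetherian. The same reasoning on the other side shows $D^l(M)$ is finitely generated as a left $\Gamma$‑module.

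For sufficiency, assume $D^r(M)$ is finitely generated on the right and $D^l(M)$ on the left for every finitely generated $\Gamma$‑bimodule $M\subseteq U$, and let $V\subseteq\K$ be a finite‑dimensional $K$‑subspace. I would first replace $V$ by $KVK$, still finite‑dimensional over $K$ on both sides by the structure recalled above, so that $V$ may be taken to be a $K$‑bimodule. Using $KU=UK=\K$ (Corollary~\ref{corol-Hartwig-1}), I would choose a finite set $S\subseteq U$ with $V\subseteq\sum_{u\in S}Ku$ and put $M:=\sum_{u\in S}\Gamma u\Gamma$, a finitely generated $\Gamma$‑bimodule by the quasi‑centrality noted above. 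Any $x\in V\cap U$ can then be written $x=\sum_{u\in S}\kappa_u u$ with $\kappa_u\in K$; left‑multiplying by a single $\gamma\in\Gamma^\times$ clearing all the denominators gives $\gamma x=\sum_{u\in S}(\gamma\kappa_u)u\in\sum_{u\in S}\Gamma u\subseteq M$, so $x\in D^r(M)$. Hence $V\cap U\subseteq D^r(M)$ is finitely generated on the right. Choosing instead a finite $S'\subseteq U$ with $V\subseteq\sum_{u\in S'}uK$ and clearing denominators by right multiplication puts $V\cap U$ inside $D^l(M')$, giving finite generation on the left; thus $U$ is a Galois order.

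The step I expect to be the genuine obstacle is keeping the sides straight in the sufficiency argument: to land $x$ in $D^r(M)$ I must write it as a \emph{left} $K$‑combination of elements of $U$ and clear $\gamma$ on the left (and dually for $D^l(M)$). That is exactly why $V$ must first be enlarged to a true $K$‑bimodule --- so that $V\cap U$ is a $\Gamma$‑bimodule one can attack from either side --- and that enlargement rests, through the finiteness of $[L^{W_\mu}:K]$, on Proposition~\ref{prop-Hartwig-1}.
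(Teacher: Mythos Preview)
Your argument is correct, but it is more elaborate than necessary and misses the single clean identity that the paper uses. The paper's entire proof is the equality
\[
D^r(M)=MK\cap U\qquad\text{(and symmetrically }D^l(M)=KM\cap U\text{)},
\]
valid for any finitely generated one-sided $\Gamma$-submodule $M\subseteq U$. One inclusion is your denominator observation; the other is the same denominator-clearing step you perform later, applied once and for all: any element of $MK$ can be written as $v\gamma^{-1}$ with $v\in M$, $\gamma\in\Gamma^\times$, so if it also lies in $U$ it belongs to $D^r(M)$. With this equality in hand, both implications are immediate: if $U$ is a Galois order then $MK$ is a finite-dimensional right $K$-space (trivially, since $M=\sum m_i\Gamma$ gives $MK=\sum m_iK$), so $MK\cap U=D^r(M)$ is f.g.; conversely, given a finite-dimensional right $K$-subspace $V\subseteq\K=UK$, pick $u_1,\dots,u_n\in U$ with $V\subseteq\sum u_iK$, set $M=\sum u_i\Gamma$, and then $V\cap U\subseteq MK\cap U=D^r(M)$.

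By contrast, you only record the inclusion $D^r(M)\subseteq KM\cap U$ at the outset, and then in the sufficiency direction you rediscover the reverse inclusion in situ via the clearing-denominators step. Your preliminary detours through quasi-centrality of $\Gamma$ and the finiteness of $[L^{W_\mu}:K]$ are not needed: $M$ is already assumed finitely generated on each side, so $MK$ and $KM$ are finite-dimensional over $K$ for free, and there is no need to enlarge $V$ to a $K$-bimodule. What your longer route buys is an explicit reminder that the whole statement is really about one-sided $K$-spans of finitely many elements of $U$; what the paper's route buys is that everything collapses to a one-line identity.
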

\begin{proof}
    We will show only the right side case, by symmetry. The main point is to show that $D^r(M)=MK \cap U$ for right $\Gamma$-modules $M \subset U$. If $u \in D^r(M)$, for some $\gamma \in \Gamma^\times$, $u \gamma = v, v \in M$. So $u = v \gamma^{-1} \in MK$, and hence $ D^r(M) \subset MK \cap U$. If $u \in U$ belongs to $MK$, we can assume it is of the form $v\gamma^{-1}$ for some $v \in M, \gamma \in \Gamma^\times$. But then $u \gamma \in M$ and hence $u \in D^r(M)$ and so $D^r(M)=MK \cap U$. Now the result follows from the definition of Galois orders.
\end{proof}

\begin{corollary}\label{projective}
     If a Galois $\Gamma$-ring $U$ is a projective $\Gamma$-module, then $U$ is Galois order.
\end{corollary}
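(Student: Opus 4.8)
The plan is to verify the defining property of a Galois order directly: for every finite–dimensional right $K$-subspace $V$ of $\mathcal K$, the intersection $U\cap V$ must be a finitely generated right (and, by the symmetric argument, left) $\Gamma$-module. Since $\Gamma$ is Noetherian and $\mathcal K=KU$ by Corollary \ref{corol-Hartwig-1}, I would first reduce to the case where $V=\bigoplus_{j=1}^{n}Kw_j$ with the $w_j$ a $K$-basis lying inside $U$: given an arbitrary $V$, each basis vector is a $K$-combination of elements of $U$, so $V$ sits inside a subspace spanned by finitely many elements of $U$, and enlarging $V$ only enlarges $U\cap V$, which is harmless because $\Gamma$ is Noetherian.

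Next I would upgrade the inclusion $U\subseteq\mathcal K$ to an identification $\mathcal K\cong U\otimes_{\Gamma}K$ of right $K$-modules. Surjectivity of the multiplication map $u\otimes k\mapsto uk$ is immediate from $\mathcal K=KU$; for injectivity one clears denominators, writing each $k\in K$ as $as^{-1}$ with $a,s\in\Gamma$, $s\neq0$ (legitimate since $\Gamma$ is a commutative domain with $K=\Frac\,\Gamma$), multiplies a relation mapping to $0$ on the right by a common $s\in K^{\times}\subseteq\mathcal K^{\times}$ so that it lands in $U$, and then moves the resulting elements of $\Gamma$ across the tensor. This step uses no projectivity, but it is the bridge that lets a merely $\Gamma$-linear map out of $U$ be extended $K$-linearly to all of $V$.

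Now projectivity enters. Choose a $\Gamma$-split embedding $\iota\colon U\hookrightarrow\Gamma^{(J)}$ into a free right $\Gamma$-module. Applying $-\otimes_{\Gamma}K$ (which preserves split injections) together with the identification above yields a $K$-linear injection $\bar\iota\colon\mathcal K\hookrightarrow K^{(J)}$ restricting to $\iota$ on $U$. Then $\bar\iota(V)$ is a finite–dimensional $K$-subspace of $K^{(J)}$, hence supported on a finite subset $J_0\subseteq J$, so
$\iota(U\cap V)=\bar\iota(U\cap V)\subseteq\bar\iota(V)\cap\Gamma^{(J)}\subseteq K^{J_0}\cap\Gamma^{(J)}=\Gamma^{J_0}$.
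Since $\Gamma$ is Noetherian, $\Gamma^{J_0}$ is a Noetherian module, so its submodule $\iota(U\cap V)$ — and therefore $U\cap V$, as $\iota$ is injective — is finitely generated. The left-hand statement follows by the same argument applied to the left $\Gamma$-module structure (using that $U$ is likewise projective on the left, or the left–right symmetry of the Galois-order condition arising from $L\mu=\mu L$, as in the proof that principal orders are Galois orders).

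The step I expect to be the real obstacle is the second one: making precise that $\mathcal K\cong U\otimes_{\Gamma}K$ and that the base-changed map $\bar\iota$ genuinely extends $\iota$ while being $K$-linear on $V$. This matters because $\iota$ itself is only $\Gamma$-linear, whereas the coefficients expressing an element of $U\cap V$ in the basis $\{w_j\}$ lie in $K\setminus\Gamma$; without the $K$-linear extension there is no control on the support of $\iota(U\cap V)$. Everything after that is routine Noetherian bookkeeping. (Alternatively, one can route the same computation through Proposition \ref{Galois-orders-D-sets}, applying the identity $D^{r}(M)=MK\cap U$ to finitely generated $M\subseteq U$ and using that $MK$ is finitely generated over $K$.)
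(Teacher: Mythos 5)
Your argument is correct; the only step that needed care --- the identification $\mathcal K\cong U\otimes_\Gamma K$ making $\bar\iota$ a $K$-linear extension of $\iota$ --- is exactly the one you flag, and your denominator-clearing sketch does establish it (right multiplication by $s\in\Gamma^\times$ is injective on $\mathcal K$ since $s$ is a unit there, so $U$ is $S$-torsion-free and $U\otimes_\Gamma K=U[S^{-1}]$ embeds in $\mathcal K[S^{-1}]=\mathcal K$). The difference from the paper is one of routing rather than substance. The paper does not verify the finite-dimensional-subspace condition directly: it invokes Proposition \ref{Galois-orders-D-sets}, which reduces being a Galois order to the finite generation of $D^r(M)$ and $D^l(M)$ for finitely generated $\Gamma$-submodules $M\subset U$. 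With that reduction in hand, projectivity is used more cheaply: writing $V'\oplus U\simeq\bigoplus_{\I}\Gamma$, a finitely generated $M$ lands in a finite sub-sum $\bigoplus_{\I_0}\Gamma$, and (since $\Gamma$ is a domain, so multiplying by $\gamma\in\Gamma^\times$ cannot create support outside $\I_0$) so does $D^r(M)$; Noetherianity of $\bigoplus_{\I_0}\Gamma$ finishes. Your version buys a self-contained proof straight from the definition, at the cost of the base-change lemma $\mathcal K\cong U\otimes_\Gamma K$, which is not needed on the paper's route because Proposition \ref{Galois-orders-D-sets} (via $D^r(M)=MK\cap U$) has already absorbed the passage between $K$-subspaces of $\mathcal K$ and $\Gamma$-submodules of $U$ --- precisely the alternative you mention in your closing parenthesis, and the one the paper actually takes.
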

\begin{proof}
    If $U$ is projective, for some $\Gamma$-module $V$, $V \oplus U \simeq \bigoplus_\I \Gamma$. Let $M$ be a finitely generated $\Gamma$-submodule of $U$. Then for some finite subset $\I_0 \subset \I$, $M \subset \bigoplus_{\I_0} \Gamma$. Let $\mathbb{D}^r$ be the set of $v \in  \bigoplus_{\I_0} \Gamma$ such that there exists $\gamma \in \Gamma^\times$ with $ v \gamma\in M$; and similarly $\mathbb{D}^l$. $D^r(M) \subset \mathbb{D}^r \subset \bigoplus_{\I_0} \Gamma$. As the bigger module is a finitely generated $\Gamma$-module, so is $D^r(U)$. Similarly, $D^l(U)$ is finitely generated. Hence by the previous Proposition, $U$ is a Galois order.
\end{proof}

The following criteria is new.

\begin{corollary}\label{flat+fp}
    If $U$ is a finitely generated and flat $\Gamma$-module, $U$ is a Galois order.
\end{corollary}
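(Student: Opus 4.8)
The plan is to deduce this immediately from Corollary \ref{projective} by upgrading flatness to projectivity: over the ring $\Gamma$, a finitely generated flat module is automatically projective.

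First I would record the two structural facts about $\Gamma$ that the argument uses. It is commutative, since $\Gamma=\Lambda^W$ is a subring of the integral domain $\Lambda$; and it is Noetherian, by the Corollary following Definition \ref{main-def-3} (an application of Eakin--Nagata, given that $\Lambda$ is a Noetherian $\Gamma$-module). Next I would show that $U$ is finitely \emph{presented} as a $\Gamma$-module: choosing a finite generating set gives a surjection $\Gamma^{n}\twoheadrightarrow U$, and its kernel is a $\Gamma$-submodule of the Noetherian module $\Gamma^{n}$, hence finitely generated, so $U$ is finitely presented. Then I would invoke the classical fact that a finitely presented flat module over a commutative ring is projective (equivalently: finitely presented plus flat implies locally free, and local freeness implies projectivity for finitely presented modules; cf. \cite{Matsumura}). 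This makes $U$ a projective $\Gamma$-module, and Corollary \ref{projective} then yields that $U$ is a Galois order.

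I do not expect any real obstacle: the only mathematical content is the implication ``finitely generated $+$ flat $\Rightarrow$ projective'' over a Noetherian ring, which is standard. The single point deserving a line of care is that ``flat $\Gamma$-module'' should be read in the same module-theoretic sense in which Corollary \ref{projective} speaks of $U$ being a ``projective $\Gamma$-module'' — and since $\Gamma$ is commutative there is no left/right ambiguity to worry about.
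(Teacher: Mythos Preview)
Your proposal is correct and matches the paper's own proof essentially verbatim: the paper also observes that $\Gamma$ is Noetherian, hence $U$ is finitely presented, so flatness implies projectivity, and then applies Corollary \ref{projective}. The only difference is the reference given for ``finitely presented $+$ flat $\Rightarrow$ projective'' (the paper cites \cite[Exercise 2.11.8]{Rowen}).
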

\begin{proof}
    As $\Gamma$ is Noetherian, $U$ is a finitely presented module, and since it is also flat, it is projective (\cite[Exercise 2.11.8]{Rowen}).
\end{proof}

In \cite{Hartwig2} it was shown that for a non-modular principal Galois $\Gamma$-order, $\Gamma$ is a maximal (with respect to inclusion) commutative subalgebra. We show now that the result holds without the non-modularity assumption.

\begin{lemma}
    Let $U$ be a principal Galois $\Gamma$-order. Let $U_{-}:=\{ X \in U|X(1_\Gamma)=0\}$. Then $U_{-}$ is a $\Gamma$-submodule of $U$ and $U=\Gamma \oplus U_{-}$.
\end{lemma}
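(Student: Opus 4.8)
The plan is to show that for any $X \in U$, the element $X - X(1_\Gamma)$ lies in $U_-$, where I must first check that $X(1_\Gamma) \in \Gamma$ (which is exactly the principal order condition, item (4) of Definition \ref{main-def-3}, applied to $1_\Gamma$) so that $X(1_\Gamma) \in \Gamma \subset U$ and the difference makes sense inside $U$. Granting this, the decomposition $X = X(1_\Gamma) + (X - X(1_\Gamma))$ exhibits $U = \Gamma + U_-$, so the content is really (a) that $U_-$ is a $\Gamma$-submodule and (b) that the sum is direct.

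For (a), I would observe that the map $\varepsilon\colon U \to \Gamma$, $X \mapsto X(1_\Gamma)$, is additive, and is left $\Gamma$-linear: writing $X = \sum_{\mu} l_\mu \mu$ and $\gamma \in \Gamma = \Lambda^W$, we have $\gamma X = \sum_\mu \gamma l_\mu \mu$, so $(\gamma X)(1_\Gamma) = \sum_\mu \gamma l_\mu \mu(1_\Gamma) = \gamma \sum_\mu l_\mu \mu(1_\Gamma) = \gamma\, X(1_\Gamma)$, using that $\mu$ is a ring automorphism, hence fixes $1$, so $\mu(1_\Gamma)$ is a scalar — actually $\mu(1) = 1$, so $X(1_\Gamma) = \sum_\mu l_\mu$ literally, which makes left-linearity transparent. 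For right $\Gamma$-linearity one computes $(X\gamma)(1_\Gamma)$: since $X\gamma = \sum_\mu l_\mu \mu(\gamma) \mu$ in the skew monoid ring, evaluating at $1$ gives $\sum_\mu l_\mu \mu(\gamma) = \big(\sum_\mu l_\mu \mu\big)(\gamma) = X(\gamma)$, and again by the principal order property this is in $\Gamma$; more to the point, one checks it equals $X(1_\Gamma)\cdot(\text{something})$ — here I should be a little careful, because $X(\gamma)$ need not equal $X(1_\Gamma)\gamma$ in general. So the cleaner statement is: $U_- = \ker \varepsilon$ with $\varepsilon$ at least \emph{left} $\Gamma$-linear, hence $U_-$ is a left $\Gamma$-submodule; for the right module structure I would instead note that $U_-$ is also closed under right multiplication by $\Gamma$ because if $X(1_\Gamma) = 0$ then $X \in \bigoplus_{\mu \neq e}\text{(components)}$ after using Proposition \ref{prop-Hartwig-1}, and the support of $X\gamma$ is contained in that of $X$; alternatively one verifies $(X\gamma)(1_\Gamma)=0$ directly from $(X\gamma)(1_\Gamma)=X(\gamma)$ together with the fact that $X$ with $X(1_\Gamma)=0$ has no $W$-orbit-of-$e$ component, so $X(\gamma)$ is a sum over $\mu \notin W$ and its "constant term" vanishes.

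For directness (b): suppose $\gamma \in \Gamma \cap U_-$. Then $\gamma(1_\Gamma) = 0$; but $\gamma \in \Gamma \subset L$ acts in $\L$ as the element $\gamma \cdot e$ (the identity automorphism component), so $\gamma(1_\Gamma) = \gamma \cdot e(1) = \gamma \cdot 1 = \gamma$. Hence $\gamma = 0$, giving $\Gamma \cap U_- = 0$ and therefore $U = \Gamma \oplus U_-$ as left $\Gamma$-modules, and with the verification above, as $\Gamma$-bimodules.

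The main obstacle I anticipate is the bimodule (right $\Gamma$-module) structure on $U_-$: left $\Gamma$-linearity of $\varepsilon$ is immediate since $\mu(1)=1$, but the evaluation map interacts with right multiplication via $X\gamma \mapsto X(\gamma)$ rather than $X(1_\Gamma)\gamma$, so I would route the right-module claim through the support/orbit decomposition of Proposition \ref{prop-Hartwig-1}(4): writing $U \subset \K = \bigoplus_{\mu\in\M/W} K[\mu]K$, the element $X(1_\Gamma)$ picks out precisely the $K[e]K = K$-component of $X$ (since only the $e$-orbit contributes a nonzero value at $1$, using the separating hypothesis $\M\M^{-1}\cap W = \{e\}$ to ensure no other orbit collapses onto $e$), so $U_-$ equals $U$ intersected with $\bigoplus_{\mu \neq e} K[\mu]K$, which is manifestly a sub-$\Gamma$-bimodule; combined with $\Gamma \subset K = K[e]K$ this simultaneously yields $U = \Gamma \oplus U_-$ and handles both sides at once.
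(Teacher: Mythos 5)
Your core argument is correct and coincides with the paper's: decompose $X = X(1_\Gamma) + \bigl(X - X(1_\Gamma)\bigr)$, note $X(1_\Gamma)\in\Gamma$ by the principality condition so the difference lies in $U$ and kills $1_\Gamma$, and get directness from the fact that a nonzero $\gamma\in\Gamma$ acts on $1_\Gamma$ as multiplication by $\gamma\neq 0$. Your observation that $\varepsilon(X)=X(1_\Gamma)=\sum_\mu l_\mu$ is left $\Gamma$-linear (because $\mu(1)=1$) correctly establishes that $U_-$ is a left $\Gamma$-submodule, which is the reading of the lemma that is actually used in Proposition \ref{Gamma-is-maximal}; the paper's own proof does not even spell this out.

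The final paragraph, however, where you try to promote this to a $\Gamma$-bimodule decomposition, rests on a false premise. Evaluation at $1_\Gamma$ does \emph{not} extract the $K[e]K$-component of $X$: precisely because every $\mu\in\M$ is a ring automorphism with $\mu(1)=1$, the value $X(1_\Gamma)=\sum_\mu l_\mu$ receives contributions from \emph{every} orbit, and the separating condition is irrelevant here. Consequently $U_-\neq U\cap\bigoplus_{\mu\neq e}K[\mu]K$ (for instance $X=\mu-e$ with $\mu\neq e$ satisfies $X(1_\Gamma)=0$ yet has a nonzero $e$-component), and $U_-$ is in general \emph{not} a right $\Gamma$-submodule. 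Concretely, take $U=A_1(\k)=\k[t](t,\sigma)$ with $\sigma(t)=t-1$, $\Gamma=\k[t]$, $W$ trivial: then $X^+-1\in U_-$, but $(X^+-1)t=(t-1)\sigma-t$ evaluates to $-1$ at $1_\Gamma$, so $(X^+-1)t\notin U_-$. Your own earlier computation $(X\gamma)(1_\Gamma)=X(\gamma)$ already exposes exactly this failure; the right move is to stop there and read the lemma as a statement about left $\Gamma$-modules, rather than to reach for the orbit decomposition of Proposition \ref{prop-Hartwig-1}(4), which does not interact with evaluation at $1_\Gamma$ in the way you claim.
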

\begin{proof}
    If $X \in U$, write $Y$ for $X-X(1_\Gamma)$. $Y(1_\Gamma)$=0, and so each $X \in U$ decompose as $X(1_\Gamma)+(X-X(1_\Gamma)) \in \Gamma + U_-$. And $\Gamma \cap U_-=0$, since $0\neq \gamma \in \Gamma$ is not the $0$ operator on $1_\Gamma$, but $X\in U_-$ acts as 0.
    
\end{proof}

\begin{proposition}\label{Gamma-is-maximal}
    Let $U$ be a principal Galois $\Gamma$-order. Then $\Gamma$ is maximal commutative subalgebra of $U$.
\end{proposition}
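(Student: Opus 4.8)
The plan is to show that the centralizer of $\Gamma$ in $U$ is exactly $\Gamma$; this is enough, since if $B$ is any commutative subalgebra of $U$ with $\Gamma \subseteq B$, then every element of $B$ commutes with every element of $\Gamma$, so $B$ lies in that centralizer and hence $B=\Gamma$. So let $X\in U$ commute with every element of $\Gamma$, and write $X=\sum_{\mu\in\M} l_\mu\,\mu\in\L=L*\M$ with $l_\mu\in L$.

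The first step is a direct computation in the skew monoid ring. For $\gamma\in\Gamma$ one has $X\gamma=\sum_{\mu} l_\mu\,\mu(\gamma)\,\mu$ while $\gamma X=\sum_{\mu}\gamma\,l_\mu\,\mu$, so the hypothesis $X\gamma=\gamma X$ forces $l_\mu\bigl(\mu(\gamma)-\gamma\bigr)=0$ in $L$ for every $\mu$ and every $\gamma\in\Gamma$. Since $L$ is a field, every $\mu\in\supp X$ fixes $\Gamma$ pointwise.

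The second step identifies any such $\mu$ with the identity. Extending $\mu\in\Autk_\k\Lambda$ to the fraction field $L$, it then fixes $K=\Frac\,\Gamma$ pointwise, hence $\mu\in\operatorname{Gal}(L/K)=W$; as an automorphism of $\Lambda$ is determined by its extension to $L$, this gives $\mu\in\M\cap W$, and since $e\in\M$ we get $\mu=\mu\,e^{-1}\in\M\M^{-1}\cap W=\{e\}$ by the separating condition in Definition \ref{main-def-1}. Thus $\supp X\subseteq\{e\}$, i.e. $X=l_e\in L\cap U$, and now the principality of $U$ gives $X(1_\Gamma)=l_e\in\Gamma$, so $X=l_e\in\Gamma$, as desired. (Equivalently, one may first invoke the preceding lemma to reduce to the case $X\in U_-$, where $X=l_e$ together with $X(1_\Gamma)=0$ immediately forces $X=0$.)

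I expect no genuine obstacle: the heart of the proof is the one-line bimodule computation in $L*\M$ combined with the separating hypothesis. The only point that needs a little care is the passage from "$\mu$ fixes $\Gamma$" to "$\mu=e$", which requires moving to the fraction field $L$ and using $\operatorname{Gal}(L/K)=W$ from the setup recalled before Proposition \ref{prop-Hartwig-1}; everything else is routine.
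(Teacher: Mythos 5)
Your proof is correct, and it takes a genuinely different route from the paper's. The paper uses the preceding lemma's decomposition $U=\Gamma\oplus U_-$: for $X$ centralizing $\Gamma$ it sets $Y=X-X(1_\Gamma)$, computes $Y(a)=[Xa-aX](1_\Gamma)=0$ for all $a\in\Gamma$, and then concludes $Y=0$ --- a final step that implicitly relies on the fact that a nonzero element of $\K$ cannot annihilate all of $\Gamma$ (a Dedekind-independence type statement the paper does not spell out). You instead argue coefficient-by-coefficient in $\L=L*\M$: since $\M$ is a left $L$-basis of $\L$, equating coefficients in $X\gamma=\gamma X$ gives $l_\mu(\mu(\gamma)-\gamma)=0$, so every $\mu\in\supp X$ fixes $\Gamma$ pointwise, hence fixes $K=\Frac\,\Gamma$, hence lies in $\operatorname{Gal}(L/K)=W$; combined with $e\in\M$ and the separating condition $\M\M^{-1}\cap W=\{e\}$ this forces $\supp X\subseteq\{e\}$, and principality then places $l_e=X(1_\Gamma)$ in $\Gamma$. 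Your version makes explicit exactly where the separating hypothesis enters and avoids the independence-of-characters step altogether, so it is arguably the more self-contained argument; the two small points that needed care --- that coefficients in $L*\M$ can be compared, and that $\mu=\mu e^{-1}\in\M\M^{-1}$ even though $\M$ is only a monoid --- are both handled correctly.
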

\begin{proof}
    We will use the above Lemma. Let $X \in U$ and $Y=X-X(1_\Gamma)$. If $X\gamma=\gamma X$ for all $\gamma \in \Gamma$, then it is enough to show that this implies $Y=0$.

    \[ Ya= X(a)-X(1_\Gamma)a=[Xa-aX](1_\Gamma)=0.\]
\end{proof}

The following proposition will be used frequently when we discuss fixed rings of generalized Weyl algebras.

\begin{proposition}\label{main-prop} We have:
\begin{enumerate}
    \item Let $U$ be a principal Galois $\Gamma$-order, and let $G$ be a finite group of automorphisms of $U$ such that $G(\Gamma) \subset \Gamma$. If $U^G$ is a Galois ring over $\Gamma^G$, then $U^G$ is a principal Galois order.
    
    \item Let $U$ be an associative algebra and $\Gamma \subset U$ an affine commutative subalgebra. Let $G$ be a finite group of automorphisms of $U$ such that $G(\Gamma) \subset \Gamma$. If $U$ is a projective $\Gamma$-module and $\Gamma$ is a projective $\Gamma^G$-module, then $U^G$ is a projective $\Gamma^G$-module.
    \end{enumerate}
\end{proposition}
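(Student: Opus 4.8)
The plan is to handle the two parts separately, deriving Part (1) from the structural results already established and Part (2) from a standard averaging argument over the finite group $G$.

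\medskip

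\textbf{Proof of Part (1).} Since $U^G$ is assumed to be a Galois ring over $\Gamma^G$, it only remains to verify the principality condition: for every $X \in U^G$ we must show $X(\Gamma^G) \subset \Gamma^G$. First I would observe that $G$ acts on $\mathcal{L} = L * \M$ compatibly, so that for $X \in U$ and $g \in G$ one has $g(X)(l) = g\bigl(X(g^{-1}(l))\bigr)$ for $l \in L$; this is the key bookkeeping identity, and it follows from the fact that $G$ normalizes $\M$ (which it must, since $G$ acts by conjugation on $\M$ in a Galois ring setting — or more precisely $G$ is a subgroup of the $W$ coming with the setting, or permutes the relevant data; one should check this hypothesis is what is meant, but it is the natural one). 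Now take $X \in U^G$ and $\gamma \in \Gamma^G \subset \Gamma$. Since $U$ is a \emph{principal} Galois $\Gamma$-order and $\gamma \in \Gamma$, we already know $X(\gamma) \in \Gamma$. To see that $X(\gamma)$ is moreover $G$-invariant, apply $g \in G$: using the compatibility identity together with $g(X) = X$ and $g(\gamma) = \gamma$, we get $g(X(\gamma)) = g(X)(g(\gamma)) = X(\gamma)$. Hence $X(\gamma) \in \Gamma^G$, as desired, and $U^G$ is a principal Galois order over $\Gamma^G$.

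\medskip

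\textbf{Proof of Part (2).} Here the plan is the classical Reynolds-operator argument. Since $\Gamma$ is affine over $\k$ and $G$ is finite with $G(\Gamma) \subset \Gamma$, and $\Gamma$ is a projective $\Gamma^G$-module by hypothesis, the averaging map $e = \frac{1}{|G|}\sum_{g \in G} g$ is well defined precisely when $|G|$ is invertible; I would note that projectivity of $\Gamma$ over $\Gamma^G$ (together with $\Gamma$ affine, so that one may also invoke Theorem \ref{MS} or its char-free refinement cited after Definition \ref{main-def-3}) is what is being used to guarantee the splitting $\Gamma = \Gamma^G \oplus (\text{complement})$ as $\Gamma^G$-modules — and in the modular case the hypothesis "$\Gamma$ is a projective $\Gamma^G$-module" is exactly the substitute for invertibility of $|G|$. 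Granting the $\Gamma^G$-module splitting $\Gamma = \Gamma^G \oplus C$ with $C$ a $\Gamma^G$-submodule, and the $\Gamma$-module splitting $U = P \oplus U'$ with $P$ a free (or direct summand of free) $\Gamma$-module since $U$ is projective, one tensors/restricts appropriately: $U$ is a projective $\Gamma$-module and $\Gamma$ is a projective $\Gamma^G$-module, so $U$ is a projective $\Gamma^G$-module by transitivity of projectivity along ring extensions. Then $U^G$ is a $\Gamma^G$-module direct summand of $U$: the inclusion $U^G \hookrightarrow U$ is split by a $\Gamma^G$-linear retraction — namely the Reynolds operator if $|G|$ is invertible, or more generally a retraction built from the projectivity data. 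Concretely, if $\Gamma = \Gamma^G \oplus C$ as $\Gamma^G$-modules, one gets a $\Gamma^G$-linear projection $\pi: U = U \otimes_{\Gamma^G} \Gamma^G \to U^G$ compatible with the $G$-action, exhibiting $U^G$ as a direct summand of the projective $\Gamma^G$-module $U$; hence $U^G$ is projective over $\Gamma^G$.

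\medskip

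\textbf{Main obstacle.} The subtle point — and the reason the hypotheses in Part (2) are stated as they are — is the modular case, where $|G|$ need not be invertible in $\k$, so the Reynolds operator is unavailable and one cannot simply average. The hard part will be producing a $\Gamma^G$-linear splitting of $U^G \hookrightarrow U$ purely from "projectivity of $U$ over $\Gamma$" and "projectivity of $\Gamma$ over $\Gamma^G$"; I expect this to go through by viewing $U$ as a $G$-equivariant $\Gamma$-module and using that $\Gamma \to \Gamma$, or rather the extension $\Gamma^G \subset \Gamma$, being projective lets one descend the summand decomposition, but the equivariance has to be tracked carefully so that the retraction lands in the $G$-fixed part. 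For Part (1) the only real subtlety is pinning down exactly how $G$ interacts with the embedding $U \subset \mathcal{L}$ and the evaluation $X \mapsto X(\cdot)$; once the compatibility identity $g(X)(g(\gamma)) = g(X(\gamma))$ is in hand, the rest is immediate.
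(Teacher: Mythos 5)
Your Part (1) is the paper's argument in substance: principality of $U$ gives $X(\gamma)\in\Gamma$ for $X\in U^G$ and $\gamma\in\Gamma^G$, and $G$-equivariance of evaluation gives $g(X(\gamma))=g(X)(g(\gamma))=X(\gamma)$, hence $X(\gamma)\in\Gamma\cap K^G=\Gamma^G$. The one point to be careful about is that the identity $g(X)(g(\gamma))=g(X(\gamma))$ is not free: $G$ is only given as a group of automorphisms of the abstract algebra $U$ preserving $\Gamma$, so one must first show that such an automorphism interacts compatibly with the embedding $U\subset\L$ and with evaluation. That is precisely the content of the lemma the paper cites (\cite[Lemma 2.19]{Hartwig}); your justification ``because $G$ normalizes $\M$'' assumes what has to be proved, since normalization of $\M$ is not among the hypotheses. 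With that lemma quoted, your argument is complete and identical in spirit to the paper's.

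For Part (2) your core argument is also the intended one (it is the proof of \cite[Lemma 11]{FS3}, which the paper cites): $U$ is projective over $\Gamma^G$ by transitivity of projectivity through $\Gamma^G\subset\Gamma\subset U$, and $U^G$ is a $\Gamma^G$-module direct summand of $U$ via the Reynolds operator $\frac{1}{|G|}\sum_{g\in G}g$, hence projective. However, your suggestion that in the modular case one could replace the Reynolds operator by ``a retraction built from the projectivity data'' is a genuine gap: the hypothesis that $\Gamma$ is projective over $\Gamma^G$ enters only in the transitivity step and does not produce a $\Gamma^G$-linear splitting of $U^G\hookrightarrow U$; in general no such splitting exists when $|G|$ is not invertible in $\k$. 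The statement (and the paper's cited proof) should be read as implicitly non-modular here, consistent with its use in Section 7 where $\operatorname{char}\k=0$, so the correct course is to drop the modular digression rather than attempt to repair it.
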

\begin{proof}
    (1): since $U^G \subset U$ and the latter is a principal Galois order, then $U^G(\Gamma) \subset \Gamma$. On the other hand, by \cite[Lemma 2.19]{Hartwig}, $U^G(K^G) \subset K^G$. Hence the claim follows. The proof of (2) is the same as \cite[Lemma 11]{FS3}.
\end{proof}

\begin{proposition}
    If $U$ is a principal Galois $\Gamma$-ring, then $\Gamma$ is a Harish-Chandra subalgebra.
\end{proposition}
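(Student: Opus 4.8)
The plan is to verify the two conditions defining a Harish-Chandra subalgebra. Quasi-commutativity is essentially automatic: since $\Gamma$ is commutative, two distinct ideals $\m,\n\in\cfs(\Gamma)$ are comaximal, so any extension $0\to\mathsf{S}_\n\to E\to\mathsf{S}_\m\to 0$ is annihilated by $\m\n=\m\cap\n$; thus $E$ is a module over $\Gamma/(\m\cap\n)\cong(\Gamma/\m)\times(\Gamma/\n)$ and necessarily splits, giving $\operatorname{Ext}^1(\mathsf{S}_\m,\mathsf{S}_\n)=0$. (This was already observed in Section 2, so I would merely recall it.) Hence the content of the proposition is quasi-centrality, and that is where the argument goes.

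For quasi-centrality I must show that $\Gamma u\Gamma$ is a finitely generated left and right $\Gamma$-module for each $u\in U$. I would use two facts established above: $\Gamma$ is Noetherian (the Corollary following Definition \ref{main-def-3}), and a principal Galois $\Gamma$-ring is a Galois order (the theorem proved above), so that $V\cap U$ is a finitely generated left and right $\Gamma$-module whenever $V$ is a finite-dimensional left or right $K$-subspace of $\K$. The strategy is to trap $\Gamma u\Gamma$ inside such a $V$. Writing $u=\sum_{\mu\in\supp u}l_\mu\mu$ with finite support in $\L=L*\M$, the bimodule $\Gamma u\Gamma$ lies in the $L$-sub-bimodule $\widetilde V:=\bigoplus_{\mu\in\supp u}L\mu$ of $\L$, and since $L/K$ is a finite extension (it is Galois with group $W$), $\widetilde V$ is finite-dimensional over $K$ on either side. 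I would then take $V:=KuK$: it is contained in $\K$ (because $K\subseteq\K$ and $\K$ is a ring) and inside $\widetilde V$, hence is a finite-dimensional $K$-subspace of $\K$, and it contains $\Gamma u\Gamma$ since $\Gamma\subseteq K$. Then $\Gamma u\Gamma\subseteq V\cap U$, the latter is finitely generated over the Noetherian ring $\Gamma$ on both sides by the Galois-order property, and hence so is its $\Gamma$-sub-bimodule $\Gamma u\Gamma$. As $u$ was arbitrary, $\Gamma$ is quasi-central, and together with quasi-commutativity this shows $\Gamma$ is a Harish-Chandra subalgebra.

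The only genuinely delicate point --- and the step I would be most careful about --- is this choice of ambient subspace: the naive candidate $\bigoplus_{\mu\in\supp u}L\mu$ is finite-dimensional over $K$ but not $W$-stable, hence not contained in $\K$, so one must pass to $KuK$ (equivalently, one may use the finite block decomposition $\K=\bigoplus_{\mu\in\M/W}K[\mu]K$ of Proposition \ref{prop-Hartwig-1}(4), each block being finite-dimensional over $K$) before the definition of a Galois order can be invoked. Everything else is formal.
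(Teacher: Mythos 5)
Your argument is correct, but it is a genuinely different route from the one the paper takes. The paper never touches the Galois-order property: it observes that since $\Gamma\subset U\subset\L$, it suffices to prove quasi-centrality inside the ambient skew monoid ring $\L$ itself, and this follows from Theorem \ref{theorem-quasi-central} applied with $A=\L$ and $\mathsf{X}=\M$ --- the whole content being that $\Gamma\mu\Gamma\subset\Lambda\mu$ and that $\Lambda$ is a finite $\Gamma$-module (condition (1) of Definition \ref{main-def-3}), so $\Gamma\mu\Gamma$ sits inside a finitely generated module over the Noetherian ring $\Gamma$. You instead invoke the preceding theorem that a principal Galois order is a Galois order, and then trap $\Gamma u\Gamma$ inside $KuK\cap U$; your care in replacing $\bigoplus_{\mu\in\supp u}L\mu$ by the $W$-stable, finite-dimensional subspace $KuK\subset\K$ (via Proposition \ref{prop-Hartwig-1}) is exactly the right precaution, and the Noetherianity of $\Gamma$ finishes it. The trade-off: the paper's argument is more elementary (it needs neither principality nor the Galois-order theorem, and in fact establishes quasi-centrality of $\Gamma$ in \emph{any} subalgebra of $\L$ containing it), whereas yours leans on a nontrivial prior theorem but applies verbatim to arbitrary Galois orders, principal or not --- which is precisely the supplementary statement recorded at the end of Proposition \ref{Gamma-HC}. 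Your dispatching of quasi-commutativity via comaximality of distinct maximal ideals is the standard observation the paper makes once and for all in Section 2, so no issue there.
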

\begin{proof}
First note that, as $\Gamma \subset U \subset \L$, if $\Gamma$ is a Harish-Chandra subalgebra of $\L$, then it is a Harish-Chandra subalgebra of $U$. To show that $\Gamma$ is a Harish-Chandra subalgebra of $\L$, apply Theorem \ref{theorem-quasi-central}, with $A=\L$ and $\mathsf{X}=\M$.
\end{proof}

If $U$ is not a principal Galois order, the following proposition tells us when $\Gamma$ is a Harish-Chandra subalgebra and how this notion is related with $U$ being a Galois order.

\begin{proposition}\label{Gamma-HC}
Let $U$ be a Galois $\Gamma$-ring, with $\Gamma$ an affine $\k$-algebra, and integral closure on its field of fractions $\bar{\Gamma}$. Then $\Gamma$ is a Harish-Chandra subalgebra if and only if $\mu \bar{\Gamma}=\bar{\Gamma}$ for any $\mu \in \M$. In particular if $U$ is a Galois $\Gamma$-order, then $\Gamma$ is a Harish-Chandra subalgebra.
\end{proposition}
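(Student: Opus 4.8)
The plan is to start from the observation that a commutative $\Gamma$ is automatically quasi-commutative: for distinct $\m,\n\in\cfs\Gamma$ comaximality makes every length-two $\Gamma$-module a module over $\Gamma/\m\n\simeq\Gamma/\m\times\Gamma/\n$, hence semisimple, so $\operatorname{Ext}^1_\Gamma(\mathsf S_\m,\mathsf S_\n)=0$. Thus being a Harish-Chandra subalgebra of $U$ is the same as being \emph{quasi-central} in $U$, and the task is to show that quasi-centrality of $\Gamma$ in $U$ is equivalent to $\mu\bar\Gamma=\bar\Gamma$ for all $\mu\in\M$. I record at the outset that, $\Gamma$ being affine over a field, it is Noetherian and $\bar\Gamma$ is a finite $\Gamma$-module (finiteness of normalization for affine domains), in particular a Noetherian $\Gamma$-module.

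The first, and main, step is to localise quasi-centrality to a single-$\mu$ statement. For $X\in U$, write $X=\sum_{\mu\in\supp X/W}[a_\mu\mu]$ with $a_\mu\in L^{W_\mu}$ (Proposition \ref{prop-Hartwig-1}). A direct computation in $\L$, using that $W$ fixes $\Gamma$ pointwise, gives $\gamma_1X\gamma_2=\sum_\mu[\gamma_1a_\mu\mu(\gamma_2)\mu]$; projecting onto the summands of $\K=\bigoplus_{\mu\in\M/W}K[\mu]K$ (Proposition \ref{prop-Hartwig-1}(4)) shows $\Gamma X\Gamma\subseteq\bigoplus_{\mu}\Gamma[a_\mu\mu]\Gamma$ and that $\Gamma X\Gamma$ surjects onto each summand, so, as $\Gamma$ is Noetherian, $\Gamma X\Gamma$ is finitely generated on both sides iff every $\Gamma[a_\mu\mu]\Gamma$ is. Then $b\mapsto[b\mu]$ identifies the $\Gamma$-bimodule $\Gamma[a_\mu\mu]\Gamma$ with $a_\mu\cdot\Gamma[\mu(\Gamma)]\subseteq L$, the left action being multiplication and the right action $b\cdot\gamma=b\mu(\gamma)$; since $a_\mu$ is a unit of $L$, finite generation on the left, resp.\ right, amounts to the subring $\Gamma[\mu(\Gamma)]$ of $L$ being module-finite over $\Gamma$, resp.\ over $\mu(\Gamma)$. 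Hence $\Gamma$ is quasi-central in $U$ exactly when, for every $\mu$ in the support monoid $\bigcup_{X\in U}\supp X$, the ring $\Gamma[\mu(\Gamma)]$ is module-finite over both $\Gamma$ and $\mu(\Gamma)$.

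It remains to make this intrinsic and to upgrade it from the support monoid to all of $\M$. Since $\Gamma[\mu(\Gamma)]$ is a finitely generated algebra over each of $\Gamma$ and $\mu(\Gamma)$, being module-finite over $\Gamma$ is equivalent to $\mu(\Gamma)$ being integral over $\Gamma$, and module-finite over $\mu(\Gamma)$ to $\Gamma$ being integral over $\mu(\Gamma)$; as $\mu$ extends to an automorphism of $L$ and the integral closure of $\mu(\Gamma)$ is $\mu(\bar\Gamma)$, the conjunction of the two says precisely that $\Gamma$ and $\mu(\Gamma)$ have the same integral closure, i.e. $\mu\bar\Gamma=\bar\Gamma$. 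By Proposition \ref{prop-Hartwig-2} the support monoid of $U$ generates $\M$, and the relation $\mu\bar\Gamma=\bar\Gamma$ is stable under composition, so the condition over the support monoid is equivalent to the condition over all of $\M$; this proves the equivalence. For the final assertion, when $U$ is a Galois order I would argue directly: for $X\in U$ the space $KXK=\bigoplus_\mu K[\mu]K$ is a finite-dimensional left $K$-subspace of $\K$, so $U\cap KXK$ is a finitely generated $\Gamma$-module on both sides, and therefore so is its submodule $\Gamma X\Gamma$ — giving quasi-centrality, hence the Harish-Chandra property, with no need for the $\bar\Gamma$-criterion. (One can also note that for a Galois order $\Lambda$ is module-finite over $\Gamma$, so $\bar\Gamma$ coincides with the normalization of $\Lambda$, which is preserved by $\M\subseteq\Autk_\k\Lambda$.)

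I expect the delicate point to be the first step: squeezing $\Gamma X\Gamma$ between the obvious over-module $\bigoplus_\mu\Gamma[a_\mu\mu]\Gamma$ and its images under the projections onto the simple $K$-bimodules, so that finite generation of $\Gamma X\Gamma$ becomes equivalent to that of each isotypic piece, and then recognising each piece as the concrete module $\Gamma[\mu(\Gamma)]$ with the $\mu$-twisted right action; the passage from the support monoid to $\M$ via Proposition \ref{prop-Hartwig-2} is the other place where one must be slightly careful. After that, the commutative-algebra translation in the third paragraph is routine.
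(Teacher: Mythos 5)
Your argument is, in substance, the proof of \cite[Proposition 5.1, Corollary 5.4]{FO} that the paper simply cites, so the route is the same: note quasi-commutativity is automatic for commutative $\Gamma$, split $\Gamma X\Gamma$ into its isotypic pieces $\Gamma[a_\mu\mu]\Gamma$ via Proposition \ref{prop-Hartwig-1}(4), identify each piece with $a_\mu\cdot\Gamma[\mu(\Gamma)]\subset L$ carrying the $\mu$-twisted right action, and translate finite generation into integrality of $\mu(\Gamma)$ over $\Gamma$ and vice versa. All of these steps check out, and your direct proof of the final assertion (for a Galois order, $\Gamma X\Gamma\subseteq U\cap KXK$ with $KXK$ a finite-dimensional left and right $K$-subspace of $\K$, hence finitely generated over Noetherian $\Gamma$) is cleaner than routing it through the integral-closure criterion.

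The one step that does not close as written is the very last translation. What your argument actually produces is the condition that $\Gamma$ and $\mu(\Gamma)$ have the same integral closure \emph{in $L$}; since $\mu(\Gamma)$ need not lie in $K=\Frac\,\Gamma$, this is not equivalent to $\mu\bar{\Gamma}=\bar{\Gamma}$ when $\bar{\Gamma}$ is read, as the statement says, as the integral closure of $\Gamma$ in its own field of fractions. With that literal reading the criterion fails in standard examples: for the Gelfand--Tsetlin subalgebra of $U(\mathfrak{gl}_n)$ one has $\bar{\Gamma}=\Gamma$ (a polynomial ring), which is not preserved by the shift automorphisms, yet $\Gamma$ is Harish--Chandra. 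The correct object is the integral closure of $\Gamma$ in $L$, which, because $\Lambda$ is a finite $\Gamma$-module by Definition \ref{main-def-3}(1), coincides with the normalization of $\Lambda$; your closing parenthesis identifies the right ring but then conflates it with the $K$-closure. (In this paper's setting $\M\subseteq\Autk_\k\,\Lambda$ preserves $\Lambda$ and hence its normalization, so the criterion is then automatic --- a feature of the statement rather than of your proof.) Two smaller remarks: the passage from supports to all of $\M$ is immediate, since $KU=\K$ and $[\mu]\in\K$ force every $\mu\in\M$ to occur in the support of some element of $U$; and one should say explicitly that the projections $\K\to K[\mu]K$ are $K$-bimodule, hence $\Gamma$-bimodule, maps, which is what makes the ``surjects onto each summand'' step legitimate.
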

\begin{proof}
    The same proofs of \cite[Propostion 5.1, Corollary 5.4]{FO} work in our setting.
\end{proof}

The following theorem is an improvement of \cite[Corollary 3.7]{Jauch2}.

\begin{theorem}\label{main-section-3}
    Let $U_i$ be a Galois $\Gamma_i$-ring in $(L_i*\M_i)^{W_i}$ with $\Lambda_i$ an affine algebra over an algebraically closed field $\k$, $i=1,2$. Then $U:=U_1 \otimes U_2$ is a Galois $\Gamma:=\Gamma_1 \otimes \Gamma_2$-ring in $(L*\M)^W$, $L=\Frac \, \Lambda_1 \otimes \Lambda_2$, $W=W_1 \times W_2$, $\M=\M_1 \times \M_2$. If $U_i$ is a Galois order $i=1,2$, then $U_1 \otimes U_2$ is a Galois order. If $U_i$ is a principal Galois order, so is $U_1 \otimes U_2$.
\end{theorem}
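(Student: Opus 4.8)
The plan is to transport the whole Galois-ring datum through the evident product identifications; the structural inputs are Proposition~\ref{prop-Hartwig-2} (for the Galois-ring assertion), a bilinearity of the evaluation action (for the principal case), and a tensor-factorization of finite-dimensional $K$-subspaces (for the Galois-order assertion).

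\emph{Setting up the product.} Put $\Lambda:=\Lambda_1\otimes_\k\Lambda_2$, $\M:=\M_1\times\M_2$, $W:=W_1\times W_2$. Since $\k$ is algebraically closed and the $\Lambda_i$ are affine domains, $\Lambda$ is again an affine Noetherian domain; via $\operatorname{Aut}_\k\Lambda_1\times\operatorname{Aut}_\k\Lambda_2\hookrightarrow\operatorname{Aut}_\k\Lambda$ the monoid $\M$ is normalized by the finite group $W$, with $\M\M^{-1}\cap W=(\M_1\M_1^{-1}\cap W_1)\times(\M_2\M_2^{-1}\cap W_2)=\{e\}$ and $\M/W=(\M_1/W_1)\times(\M_2/W_2)$ finite, so $(\Lambda,\M,W)$ is a Galois ring setting, with axiom (1) of Definition~\ref{main-def-3} automatic because $\Lambda$ is affine (the discussion after Definition~\ref{main-def-3}). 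As $\Lambda_2$ is free over $\k$ and invariants under a finite group are an equalizer, hence preserved under a flat base change, $(-)^{W_1}$ commutes with $-\otimes_\k\Lambda_2$, and symmetrically; iterating gives $\Gamma:=\Lambda^W=\Gamma_1\otimes_\k\Gamma_2$, so the $\Gamma$ of the statement is indeed $\Lambda^W$. Passing to fraction fields, $L=\Frac\,\Lambda=\Frac(L_1\otimes_\k L_2)$ and $K=L^W=\Frac(K_1\otimes_\k K_2)$; moreover $\L_1\otimes_\k\L_2$ is the skew monoid ring $(L_1\otimes_\k L_2)*\M$, a subring of $\L=L*\M$, and the same equalizer argument gives $(\L_1\otimes_\k\L_2)^W=\K_1\otimes_\k\K_2$. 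Hence $U:=U_1\otimes_\k U_2$ is a $\k$-subalgebra of $\K=\L^W$ containing $\Gamma$.

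\emph{Galois ring and principal order.} Proposition~\ref{prop-Hartwig-2} applied to $U_i$ shows $S_i:=\bigcup_{x\in U_i}\supp\,x$ generates $\M_i$ as a monoid; since $\supp(x_1\otimes1)=\supp(x_1)\times\{e\}$ and $\supp(1\otimes x_2)=\{e\}\times\supp(x_2)$, the union $\bigcup_{x\in U}\supp\,x$ contains $S_1\times\{e\}$ and $\{e\}\times S_2$, which together generate $\M$, so Proposition~\ref{prop-Hartwig-2} gives that $U$ is a Galois $\Gamma$-ring. For the principal case, the factorwise action of $\M=\M_1\times\M_2$ on $\Lambda=\Lambda_1\otimes\Lambda_2$ yields $(X_1\otimes X_2)(\gamma_1\otimes\gamma_2)=X_1(\gamma_1)\otimes X_2(\gamma_2)$ for $X_i\in U_i$, $\gamma_i\in\Gamma_i$; since $Y\mapsto(l\mapsto Y(l))$ is bilinear in $Y$ and $l$ and $\Gamma=\Gamma_1\otimes\Gamma_2$ is spanned by elementary tensors, $X_i(\Gamma_i)\subseteq\Gamma_i$ for $i=1,2$ forces $Y(\Gamma)\subseteq\Gamma$ for every $Y\in U$. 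With the previous point this makes $U$ a principal Galois order, hence a Galois order by the theorem that every principal $\Gamma$-order is a Galois order.

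\emph{The Galois-order case (the hard part).} Assume each $U_i$ is a Galois order and let $V\subseteq\K$ be a finite-dimensional left (or right) $K$-subspace. As $\Gamma$ is Noetherian, exactly as in the proof just cited it suffices to bound $U\cap V'$ for $V'=\bigoplus_{i=1}^{n}L\nu_i$ with $\nu_i=(\mu^{(i)}_1,\mu^{(i)}_2)\in\M$. Since $U\subseteq(L_1\otimes_\k L_2)*\M$, every element of $U\cap V'$ has all its coefficients at the $\nu_i$ in $L_1\otimes_\k L_2$, and under the identification above $(L_1\otimes_\k L_2)\nu_i=(L_1\mu^{(i)}_1)\otimes_\k(L_2\mu^{(i)}_2)$, so $U\cap V'\subseteq(U_1\otimes_\k U_2)\cap(V_1\otimes_\k V_2)$ where $V_i:=\sum_j L_i\mu^{(i)}_j$ is a finite-dimensional $L_i$-subspace of $\L_i$. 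Choosing in each $\L_i$ a basis whose subsets span $U_i\cap V_i$, $U_i$ and $V_i$ respectively yields the identity $(U_1\otimes_\k U_2)\cap(V_1\otimes_\k V_2)=(U_1\cap V_1)\otimes_\k(U_2\cap V_2)$. Finally, $[L_i:K_i]=|W_i|<\infty$, so $V_i$ is finite-dimensional over $K_i$; hence $V_i\cap\K_i$ is a finite-dimensional $K_i$-subspace of $\K_i$ and $U_i\cap V_i=U_i\cap(V_i\cap\K_i)$ is finitely generated over $\Gamma_i$ on both sides by the Galois-order hypothesis. Therefore $(U_1\cap V_1)\otimes_\k(U_2\cap V_2)$ is finitely generated over $\Gamma=\Gamma_1\otimes_\k\Gamma_2$ on both sides, and so is its $\Gamma$-submodule $U\cap V'$ ($\Gamma$ Noetherian), whence so is $U\cap V$ (the left--right symmetry coming as usual from $L\mu=\mu L$). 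I expect the main obstacle to be precisely this reduction: one must verify that enlarging $V$ to an $L$-span and pulling its coefficients back into $L_1\otimes_\k L_2$ genuinely confines $U\cap V$ inside a single product $V_1\otimes_\k V_2$ of subspaces pulled from the two factors, which is what allows the two Galois-order hypotheses to be combined through the tensor-intersection identity.
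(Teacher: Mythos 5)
Your proposal is correct and follows essentially the same route as the paper: the Galois-order case is handled by exactly the paper's reduction (replace $V$ by $LV=\bigoplus L\mu_i\nu_i$, pull coefficients back into $L_1\otimes_\k L_2$ via $U\into (L_1*\M_1)\otimes(L_2*\M_2)\simeq (L_1\otimes L_2)*\M$, and trap $U\cap LV$ inside $(U_1\cap V_1)\otimes(U_2\cap V_2)$), and the principal case by the same componentwise evaluation on decomposable tensors. The only divergence is cosmetic: for the first claim the paper cites \cite[Theorem 4.2]{FO}, whereas you rederive it from Proposition~\ref{prop-Hartwig-2} via the support-generation criterion, which makes that step self-contained but does not change the argument.
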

\begin{proof}
    The crucial observations are that, under these conditions, $\Lambda_1 \otimes \Lambda_2$ is a domain \footnote{Actually, $\Lambda_1 \otimes \Lambda_2$ is a domain if one of the $\Lambda_i$ is a separable $\k$-algebra and one of them, maybe the same, has $\Frac \, \Lambda_i$ a primary extension of $\k$, by \cite[Chapter V, $\oint$ 17, Corollary to Prop 1]{Bourbaki}. Our theorem holds with this much weaker hypothesis.}, and also affine and hence Noetherian. Then the first claim is \cite[Theorem 4.2]{FO}, with the same proof. For the second claim, let $V$ be a finite dimensional $K$-vector space in $(L*\M)^W$. If $LV \cap U$ is a finitely generated $\Gamma$-module, since $\Gamma$ is Noetherian, its submodule $V \cap U$ is a finitely generated $\Gamma$-module as well. We can assume without loss of generality that $LV=\bigoplus_{i=1}^s L \mu_i \nu_i$ for some $\mu_i \in \M_1$, $\nu_i \in \M_2$, and $s \in \N$.
    We have

    \[ U \into L_1*\M_1 \otimes L_2 * \M_2 \simeq^{f} (L_1 \otimes L_2)*\M \into L*\M\]
    
    Hence $LV \cap  (L_1*\M_1 \otimes L_2 * \M_2) \subset W_1 \otimes W_2  $, where $W_1=\bigoplus_{i=1}^s L_1 \mu_i$, $W_2=\bigoplus_{i=1}^s L_2 \nu_i$. Now, $LV \cap U \subset (W_1 \cap U_1) \otimes (W_2 \cap U_2)$, and as each $U_i$ is a $\Gamma_i$-Galois order, each $W_i \cap U_i$ is a finitely generated $\Gamma_i$-module. Hence $ W_1 \cap U_1 \otimes W_2 \cap U_2$ is a finitely generated $\Gamma$-module; hence so is $LV \cap U$, and finally $V \cap U$.

    Finally, if each $U_i$ is a principal Galois $\Gamma_i$-order, let $X \in U$ be arbitrary. Then we have $X=\sum Y_1^i \otimes Y_2^i$, where each $Y_j^i \in U_j$, $j=1,2$. Let $\gamma_1 \otimes \gamma_2$ be a decomposable tensor of $\Gamma_1 \otimes \Gamma_2$. As $ U \into L_1*\M_1 \otimes L_2 * \M_2$, $X(\gamma_1 \otimes \gamma_2)=\sum Y_1^i(\gamma_1) \otimes Y_2^i(\gamma_2) \in \Gamma \otimes \Gamma \onto^f \Gamma$. Hence $X(\Gamma) \subset \Gamma$, and $U$ is a principal Galois order.
\end{proof}

\section{Main theorem of Galois orders in prime characteristic}

Let $U$ an associative algebra and $\Gamma$ a commutative Harish-Chandra subalgebra, $\m$ a maximal ideal in the cofinite spectrum of $\Gamma$. In general we will work in the setting where $U$ is a Galois $\Gamma$-order.

\begin{definition}
    A Harish-Chandra module for the pair $(U, \Gamma)$ is a finitely generated $U$-module where $\Gamma$ acts locally finitely; or, equivalently, where we have a decomposition

    \[ M=\bigoplus_{\n \in \SpecMax \Gamma} M^\n, \]
    where $M^\n=\{v \in M| \n^{N_v} v=0, N_v \gg 0\}.$
\end{definition}

We denote the category of Harish-Chandra modules by $\H \C(U,\Gamma)$. Given $M$ a Harish-Chandra module, we denote by $\Supp \, M$ the set of maximal ideals $\n$ of $\Gamma$ such that $M^\n \neq 0$, and if $\dim M^\n < \infty$, we call this dimension the \textit{Harish-Chandra multiplicity} of $\n$.

Let $\Irred(\n)$ be the isomorphism set of irreducible Harish-Chandra modules $M$ with $\n \in \Supp \, M$. The main questions addressed by the Main Theoren of Galois order theory \cite[Main Theorem]{FO2} are: (a) is $\Irred(\n)$ non-empty for every $\n \in \SpecMax \, \Gamma$? (b) Is $|\Irred(\n)|$ finite? (c) If $M \in \Irred(\n)$, is $M^\n$ finite dimensional? The main result of Futorny and Ovsienko \cite{FO2} is that, under a minor technical assumption, all these questions have a positive solution to Galois orders.

We first notice that, for principal Galois orders, the answer for (a) is positive by a simple argument.

\begin{theorem}\label{lifting-principal-Galois-orders}
    Let $U$ be a principal $\Gamma$-Galois order.
    \begin{enumerate}
        \item Every maximal ideal $\m$ of $\Gamma$ is contained in a maximal right ideal $M$ of $U$.
        \item If $\m \in \cfs \, \Gamma$, then there is an irreducible Harish-Chandra $U^\circ$-module $\mathsf{M}$ with $\m$ in its support.
    \end{enumerate}
\end{theorem}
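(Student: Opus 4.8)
\emph{Part (1).} The plan is to derive (2) from (1), using that a principal Galois order is a Galois order. For (1), observe that since $U$ is principal, evaluation at $1_\Gamma$ gives a well-defined map $\eps\colon U\to\Gamma$, $X\mapsto X(1_\Gamma)$; it restricts to the identity on $\Gamma$, and it is left $\Gamma$-linear because $\gamma X=\sum_\mu(\gamma l_\mu)\mu$ whenever $X=\sum_\mu l_\mu\mu$, so $\eps(\gamma X)=\gamma\,\eps(X)$. Hence $\eps(\m U)\subseteq\m$, so $1\notin\m U$ and $\m U$ is a proper right ideal; by Zorn's lemma it is contained in a maximal right ideal $M$, and $\m\subseteq\m U\subseteq M$ since $1_U\in U$. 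This proves (1) and produces the candidate for (2): the simple right $U$-module --- equivalently, the simple left $U^\circ$-module --- $N:=U/M$, whose generator $\bar 1$ is nonzero (as $\m U\neq U$) and satisfies $\bar 1\cdot\m=0$ (as $\m\subseteq M$).

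\emph{Part (2).} It then suffices to check that $N$ is a Harish-Chandra $U^\circ$-module, i.e.\ that $\Gamma$ acts locally finitely on it; granting this, $N$ is simple and finitely generated over $U^\circ$, and $0\ne\bar 1\in N^\m$ gives $\m\in\Supp N$, so we set $\mathsf{M}:=N$. I would deduce local finiteness from the fact that $N$ is a quotient of $U/\m U$ and that $U/\m U$ is locally finite as a \emph{right} $\Gamma$-module. Indeed, given $u\in U$, Proposition~\ref{prop-Hartwig-1} puts $u$ in a $\Gamma$-sub-bimodule $V:=\bigoplus_iK[\mu_i]K$ of $\K$ that is finite-dimensional over $K$, where $W\mu_1,\dots,W\mu_r$ are the finitely many $W$-orbits meeting $\supp u$, so that $u\Gamma\subseteq U\cap V$. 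Since $U$ is a Galois order, $U\cap V$ is a finitely generated \emph{left} $\Gamma$-module, so its image $\overline{U\cap V}$ in $U/\m U$ is a finitely generated left $\Gamma$-module killed by $\m$ (as $\m$ annihilates $U/\m U$ on the left); because $\Gamma/\m$ is a finite extension of $\k$ --- here $\m\in\cfs\,\Gamma$ is used --- $\overline{U\cap V}$ is finite-dimensional over $\k$. Hence the cyclic right $\Gamma$-submodule $\bar u\,\Gamma=\overline{u\Gamma}$ of $U/\m U$ lies inside $\overline{U\cap V}$ and is finite-dimensional, so $U/\m U$, and therefore its quotient $N$, is locally $\Gamma$-finite.

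The step I expect to be the main obstacle is the finiteness of $\overline{U\cap V}$ just invoked. Its subtlety is that the action that must be controlled on $N$ is the \emph{right} $\Gamma$-action, while finiteness has to be extracted from the \emph{left} $\Gamma$-module structure of $U/\m U$ --- available only because $\m U$ is also a left $\Gamma$-submodule of $U$ --- together with the Galois order condition applied to $U\cap V$. It is also the one place where the hypothesis $\m\in\cfs\,\Gamma$, as opposed to an arbitrary maximal ideal, is needed: it is exactly what makes $\Gamma/\m$, and hence $\overline{U\cap V}$, finite-dimensional over $\k$, and for a general maximal ideal $U/\m U$ need not be locally $\Gamma$-finite.
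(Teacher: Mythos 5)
Your proof is correct. Part (1) coincides with the paper's argument: both use the evaluation $X\mapsto X(1_\Gamma)$ (well defined precisely because $U$ is principal) to produce a surjection $U/\m U\onto\Gamma/\m$, whence $\m U$ is a proper right ideal. For part (2) you take a genuinely different route. The paper notes only that $\overline{1}\in(U/M)^{\m}$ is a generalized weight vector generating an irreducible module and quotes \cite[Proposition 14]{DFO}, whose hypothesis --- that $\Gamma$ is a Harish-Chandra subalgebra --- holds because $U$ is a principal Galois order. You instead prove local $\Gamma$-finiteness of $U/\m U$ directly: each $u\Gamma$ is trapped in $U\cap V$ for a finite-dimensional $K$-sub-bimodule $V$ of $\K$ supplied by Proposition \ref{prop-Hartwig-1}, the Galois-order property makes $U\cap V$ a finitely generated left $\Gamma$-module, and reduction modulo $\m U$ (a two-sided $\Gamma$-submodule) turns this into a finitely generated $\Gamma/\m$-module, hence finite-dimensional over $\k$ since $\m\in\cfs\,\Gamma$. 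Both arguments are sound. Yours is self-contained, yields the stronger conclusion that all of $U/\m U$ (not just its simple quotient) is locally finite over $\Gamma$, and is in spirit the same mechanism the paper deploys later for non-principal Galois orders via $U(S(\m,\m))$ in Theorem \ref{lifting-Galois-orders}; the paper's version is shorter and puts the representation-theoretic weight on the quasi-centrality of $\Gamma$. Note only that your route relies on the previously proved implication ``principal Galois order $\Rightarrow$ Galois order,'' whereas the paper's relies on the previously proved fact that $\Gamma$ is a Harish-Chandra subalgebra of a principal Galois order --- each is available at this point in the text.
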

\begin{proof}
    (1) We show that $\m U$ is a proper right ideal of $U$, and then, by Zorn's Lemma, we will have our maximal right ideal $M$. We have a map $\phi: U \rightarrow \Gamma/\m$ which is the composition $\theta \circ \psi$ of $\psi:U \onto \Gamma$, where $X \in U$ goes to $X(1_\Gamma)$, and $\theta: \Gamma \rightarrow \Gamma/\m$ is the canonical projection. The map $\phi$ is surjective and has $\m U$ in its kernel. So we have an epimorphism $U/\m U \rightarrow \Gamma/\m$, and so $\m U \neq U$.
    (2) $1+M \in \mathsf{M}$, where $\mathsf{M}=U/M$, lies in $\mathsf{M}^\m$. Since $\mathsf{M}$ is irreducible and generated by a generalized weight vector ($1+M$), \cite[Proposition 14]{DFO} implies that $\mathsf{M}$ is actually a Harish-Chandra module.
\end{proof}

We will now reprove the Main Theorem of Galois order theory in prime characteristic. We will follow closely the presentation in \cite{FO}, \cite{FO2}. We also strive for a clearer exposition of some results.

\textbf{\textit{From now on}} Assume $\Lambda$ is affine, and $\k$ algebraically closed, and $\M$ a group. Assume also that $\Gamma$ integrally closed, until the proof of Theorem \ref{Main-Theorem}.

Let $S$ be a subset of $\M$ that is $G$-invariant, and $\Oo_i,i=1,\ldots,k$ its orbits. We introduce $U(S)=\{u \in U| \supp \, u \subset S\}$, a $\Gamma$-subbimodule of $U$.

\begin{lemma}\label{US}
  We have that $D^r(U(S))=U(S)$. The same for $D^l$.
\end{lemma}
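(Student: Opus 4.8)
The plan is to argue directly from the definition of $D^r$, using only the elementary fact that multiplying an element of $\L = L*\M$ on either side by a nonzero element of $\Gamma$ leaves its support unchanged. Neither the $G$-invariance of $S$ nor any finiteness of $U(S)$ plays a role.

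First I would dispose of the trivial inclusion: since $1_\Gamma \in \Gamma^\times$ and $u\,1_\Gamma = u$, every $u \in U(S)$ lies in $D^r(U(S))$, so $U(S) \subseteq D^r(U(S))$, and likewise $U(S) \subseteq D^l(U(S))$. For the reverse inclusion, let $u \in D^r(U(S))$ and write $u = \sum_{\mu \in \M} l_\mu \mu$ with $l_\mu \in L$, so that $\supp u = \{\mu : l_\mu \neq 0\}$. By definition there is $\gamma \in \Gamma^\times = \Gamma \setminus \{0\}$ with $u\gamma \in U(S)$ (resp. $\gamma u \in U(S)$, according to the chosen side of the definition). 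Using the multiplication rule of the skew monoid ring, $u\gamma = \sum_{\mu} l_\mu\,\mu(\gamma)\,\mu$ and $\gamma u = \sum_{\mu} \gamma l_\mu\,\mu$. Now $\gamma \neq 0$ in the domain $\Lambda$, and each $\mu \in \M$ extends to a field automorphism of $L = \Frac\,\Lambda$, so $\mu(\gamma) \neq 0$; since $L$ is a field, $l_\mu\,\mu(\gamma) = 0 \iff l_\mu = 0 \iff \gamma l_\mu = 0$. Hence $\supp(u\gamma) = \supp(\gamma u) = \supp u$. As the relevant product lies in $U(S)$, its support is contained in $S$, whence $\supp u \subseteq S$; and since $u$ already belongs to $U$, we conclude $u \in U(S)$. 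This gives $D^r(U(S)) \subseteq U(S)$, and combined with the first inclusion, $D^r(U(S)) = U(S)$. The argument for $D^l$ is word-for-word the same, the support computation having been carried out symmetrically on both sides.

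There is essentially no obstacle here; the single point deserving a line of care is to record that the scalars obtained by sliding $\gamma$ past an element of $\M$ — namely $\mu(\gamma)$ — are nonzero, which is automatic because $\Gamma$ and $L$ are domains and $\M$ acts by automorphisms. Everything else is just the inclusion $\supp(xy) \subseteq \supp(x)\supp(y)$ specialized to $y \in \Gamma \setminus \{0\}$, where it becomes an equality.
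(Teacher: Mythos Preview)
Your argument is correct and is precisely the natural one: multiplying $u\in\L$ on either side by a nonzero $\gamma\in\Gamma$ preserves $\supp u$, so the condition $u\gamma\in U(S)$ (or $\gamma u\in U(S)$) forces $\supp u\subset S$, and since $D^r$ is by definition a subset of $U$, you get $u\in U(S)$. The paper does not include a proof of this lemma, so there is nothing further to compare; your observation that neither the $W$-invariance of $S$ nor any finite generation hypothesis is needed is also accurate.
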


In the next Proposition assume $\Gamma$ is a Harish-Chandra subalgebra. In this setting, define $f_S^r$ (and similarly $f_S^l$), an element of $\Gamma \otimes \Gamma$, by

\[ f_S^r= \prod_{s \in S} [f \otimes 1 - 1 \otimes s^{-1}(f)].\]

Note that the coefficients of $f_S^r$ must belong to $K$. The next proposition deals with $f_S^r$, but we have an obvious symmetric result for $f_S^l$.

\begin{proposition}\label{horrible-proposition}
\begin{enumerate}
    \item Assume $u \in U$. Then $u \in U(S)$ if and only if $f_S^r u=0$ for any $f \in \Gamma$.
    \item If $u \in U$ and $S^c= \supp \, u\setminus S$, then $f_{S^c}^ru\in U(S)$ for every $f \in \Gamma$.
    \item If $f_S^r=\sum f_i \otimes g_i$ ($f_i, g_i \in \Gamma)$, and $[a \mu] \in \K$, then $f_S^r[a \mu]=[\sum(f_i \mu(g_i)a)\mu]=\prod_{s \in S}[(f-s^{-1}\mu(f))a \mu]$
    \item Let $\Oo$ be a $G$-orbit of a certain $\mu$ in $\M$ and let $T \subset \M$ be a $G$-invariant subset. The $\Gamma$-bimodule homomorphism $P_\Oo^T(f)$ that sends $U(T)$ to $U(\Oo)$, $u \mapsto f_{T\setminus \Oo}^r u$, is either $0$ or has kernel $U(T\setminus \Oo)$.
    \item The map $P_S(f)=(P_{\Oo_1}^S(f), \ldots, P_{\Oo_k}^S(f))$ is a $\Gamma$-bimodule monormorphism from $U(S)$ into $\bigoplus_{i=1}^k U(\Oo_i)$.
\end{enumerate}
\end{proposition}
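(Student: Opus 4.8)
The plan is to derive all five parts from one explicit description of how $f_S^r$ acts on $\L$ through the $\Gamma$-bimodule structure $(g\otimes h)\cdot X=gXh$. Since $\mu\,h=\mu(h)\,\mu$ inside $\L$, each factor $f\otimes 1-1\otimes s^{-1}(f)$ sends $l_\mu\mu$ to $\bigl(f-(\mu s^{-1})(f)\bigr)l_\mu\mu$, so for $X=\sum_\mu l_\mu\mu$ one has
\[
f_S^r\cdot X=\sum_{\mu\in\supp X}\left(\prod_{s\in S}\bigl(f-(\mu s^{-1})(f)\bigr)\right)l_\mu\,\mu .
\]
Three standing facts will be used throughout: (i) $S$ is finite, being the union of the finitely many conjugation orbits $\Oo_1,\dots,\Oo_k$, each of size at most $|W|$; (ii) for $u\in U\subseteq\K=\L^W$ the support $\supp u$ is $W$-invariant, since $W$-invariance of $u$ forces $l_{w\mu w^{-1}}=w(l_\mu)$ and $w$ is an automorphism; (iii) for $W$-invariant $S$ one has $f_S^r\in\Gamma\otimes\Gamma$ — its left coefficients are powers of $f$ and its right coefficients are elementary symmetric functions of the $W$-stable set $\{s^{-1}(f):s\in S\}$, hence lie in $K$ and are integral over $\Gamma$, which is integrally closed (equivalently, since $\Gamma$ is a Harish--Chandra subalgebra, $\mu(\Gamma)=\Gamma$ for all $\mu\in\M$ by Proposition \ref{Gamma-HC}).

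For (1): if $\supp u\subseteq S$, then in the displayed sum the factor indexed by $s=\mu$ is $f-f=0$, so every product vanishes and $f_S^r u=0$ for every $f$. Conversely, if $\mu_0\in\supp u\setminus S$, then for each $s\in S$ we have $\mu_0 s^{-1}\ne e$, so by the separating condition and $\operatorname{Gal}(L/K)=W$ the automorphism $\mu_0 s^{-1}$ does not fix $K$ pointwise; hence $\Gamma_s:=\{f\in\Gamma:(\mu_0 s^{-1})(f)=f\}$ is a proper $\k$-subspace of $\Gamma$. As $S$ is finite and $\k$ is infinite, $\Gamma\ne\bigcup_{s\in S}\Gamma_s$, and for any $f$ outside this union the $\mu_0$-coefficient of $f_S^r u$ is a nonzero element of $L$. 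Part (2) is then immediate: by fact (ii) the set $S^c=\supp u\setminus S$ is $W$-invariant, so $f_{S^c}^r\in\Gamma\otimes\Gamma$ and $f_{S^c}^r u\in U$; in the displayed sum the $\mu$-coefficient dies whenever $\mu\in S^c$ (take $s=\mu$), so $\supp(f_{S^c}^r u)\subseteq\supp u\cap S\subseteq S$.

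For (3) I would expand $f_S^r=\sum_i f_i\otimes g_i$ and $[a\mu]=\sum_{w\in W/W_\mu}w(a)\,w.\mu$; using $w(f_i)=f_i$ and $w^{-1}(g_i)=g_i$ (both coefficients being $W$-fixed) one obtains $(w.\mu)(g_i)=w(\mu(g_i))$, hence $f_i[a\mu]g_i=\sum_w w\bigl(f_i a\,\mu(g_i)\bigr)w.\mu=[\,f_i\mu(g_i)a\,\mu\,]$; summing over $i$ shows that $f_S^r$ multiplies $[a\mu]$ by the scalar $\sum_i f_i\,\mu(g_i)\in L^{W_\mu}$, which is the image of $f_S^r$ under the algebra map $g\otimes h\mapsto g\,\mu(h)$ and so equals $\prod_{s\in S}\bigl(f-\mu(s^{-1}(f))\bigr)$. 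Part (4) follows from (3): writing $u\in U(T)$ as $\sum_{\Oo'}[a_{\Oo'}\nu_{\Oo'}]$ over the orbits $\Oo'\subseteq T$ (using Proposition \ref{prop-Hartwig-1}(2)), the operator $f_{T\setminus\Oo}^r$ multiplies the $\Oo'$-summand by $c(\nu_{\Oo'})=\prod_{s\in T\setminus\Oo}\bigl(f-(\nu_{\Oo'}s^{-1})(f)\bigr)$, which vanishes whenever $\Oo'\ne\Oo$ (take $s=\nu_{\Oo'}\in T\setminus\Oo$); and for $\nu,\nu'$ in the same orbit $\Oo$ with $\nu'=w_0.\nu$, reindexing the product by $s\mapsto w_0 s w_0^{-1}$ shows $c(\nu')=w_0(c(\nu))$, so $c$ vanishes on all of $\Oo$ or on none of it. In the first case $P_\Oo^T(f)=0$; in the second $f_{T\setminus\Oo}^r u=[c(\nu)a_\Oo\,\nu]$ with $c(\nu)\ne 0$, which is zero iff $a_\Oo=0$, i.e. iff $u\in U(T\setminus\Oo)$. (That $P_\Oo^T(f)$ lands in $U(\Oo)$ is the support computation of part (2) once more.)

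Finally, for (5): each $P_{\Oo_i}^S(f)\colon u\mapsto f_{S\setminus\Oo_i}^r u=\sum_j f_j u g_j$ is a $\Gamma$-bimodule homomorphism because $\Gamma$ is commutative, so $P_S(f)$ is one into $\bigoplus_{i=1}^k U(\Oo_i)$ and $\ker P_S(f)=\bigcap_i\ker P_{\Oo_i}^S(f)$. By (4) each factor is $U(S)$ or $U(S\setminus\Oo_i)$; I would then choose $f$ so that the elements $\{s^{-1}(f):s\in S\}$ are pairwise distinct — this excludes only a finite union of proper $\k$-subspaces of $\Gamma$, hence is possible since $\k$ is infinite — which makes the scalar $\prod_{s\in S\setminus\Oo_i}\bigl(f-(\nu_i s^{-1})(f)\bigr)$ nonzero for each $i$, so no $P_{\Oo_i}^S(f)$ vanishes and $\ker P_S(f)=\bigcap_{i=1}^k U(S\setminus\Oo_i)=U(\varnothing)=0$. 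I expect the main obstacle to be the bookkeeping of the three automorphisms $w,\mu,s$ in (3) — checking that the $W$-invariance of $f$, of the $g_i$, and of $S$ together let one pull $w$ out and reindex the product — together with the dichotomy used in (4) and (5): the decisive, slightly non-obvious point is that vanishing of the scalar $c(\nu)$ is an \emph{orbit} property, which is exactly what promotes a statement about a single $\nu$ to one about all of $\Oo$ and forces the genericity restriction on $f$ in (5).
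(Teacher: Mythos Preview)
Your argument is correct and complete. The paper itself does not give an independent proof but simply refers to \cite[Lemma 5.2]{FO}, so you have in effect reconstructed that proof; the approach---computing the action of $f_S^r$ on a monomial $l_\mu\mu$ via the explicit scalar $\prod_{s\in S}(f-(\mu s^{-1})(f))$ and then reading off each part---is the standard one. Two minor remarks: your observation that $f_S^r\in\Gamma\otimes\Gamma$ (not merely $K\otimes K$) is needed for $f_S^ru$ to land in $U$, and your justification via $\mu(\Gamma)=\Gamma$ (using the standing hypotheses that $\Gamma$ is integrally closed and Harish--Chandra) is exactly right; and in (5) the statement cannot hold for \emph{every} $f$ (take $f\in\k$), so your reading ``for generic $f$'' and the avoidance-of-proper-subspaces argument is the intended one, consistent with how the result is used in Theorem~\ref{less-horrible-theorem}.
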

\begin{proof}
    The proof of \cite[Lemma 5.2]{FO} works without modification here.
\end{proof}

\begin{theorem}\label{less-horrible-theorem}
Let $U$ be a Galois $\Gamma$-ring, where $\Gamma$ is a Harish-Chandra subalgebra. Then the following are equivalent: (a) $U$ is a Galois order; (b) $U(S)$ is a finitely generated left and right $\Gamma$-module for each $G$-invariant subset $S \subset \M$; (c) for each orbit $\Oo_\mu$ of the $G$-action on $\M$, $U(\Oo_\mu)$ is a finitely generated left and right $\Gamma$-bimodule.   
\end{theorem}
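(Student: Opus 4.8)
The plan is to prove the cycle of implications $(c) \Rightarrow (b) \Rightarrow (a) \Rightarrow (c)$, exploiting the $D^r/D^l$-characterization of Galois orders from Proposition \ref{Galois-orders-D-sets} and the bimodule homomorphisms $P_S(f)$ constructed in Proposition \ref{horrible-proposition}. The implication $(b) \Rightarrow (c)$ is trivial, since a single orbit $\Oo_\mu$ is in particular a $G$-invariant subset of $\M$. And $(a) \Rightarrow (b)$ should follow by combining Lemma \ref{US} with Proposition \ref{Galois-orders-D-sets}: given a $G$-invariant $S$, one has $U(S) = D^r(U(S))$, and since $U(S) \subset U$ is a $\Gamma$-subbimodule lying inside a finite direct sum of $K$-bimodules $\bigoplus_{\mu \in S/W} K[\mu]K$ (finitely many by the finite-orbit hypothesis in the Galois ring setting), one intersects with a suitable finite-dimensional $K$-subspace and invokes that $U$ is a Galois order to conclude $U(S)$ is finitely generated on both sides.

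The substantive implication is $(c) \Rightarrow (b)$, and here I would argue by induction on the (finite) number $k$ of $G$-orbits meeting $S$. For the inductive step, fix one orbit $\Oo = \Oo_1 \subset S$ and set $S' = S \setminus \Oo$, which is again $G$-invariant and meets one fewer orbit. Using Proposition \ref{horrible-proposition}(4)(5) applied with $T = S$, the map $P_{\Oo}^S(f)\colon U(S) \to U(\Oo)$ has kernel exactly $U(S')$ (when it is nonzero), so for a well-chosen $f \in \Gamma$ one gets a short exact sequence of $\Gamma$-bimodules
\[
0 \too U(S') \too U(S) \too U(\Oo) ,
\]
where the last map has image inside the finitely generated module $U(\Oo)$ (finitely generated by hypothesis $(c)$). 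Since $\Gamma$ is Noetherian, that image is a finitely generated $\Gamma$-module on each side; combined with the inductive hypothesis that $U(S')$ is finitely generated on each side, and using that extensions of finitely generated modules are finitely generated, we conclude $U(S)$ is finitely generated on each side. The base case $k=0$ is $U(\emptyset) = 0$, and $k=1$ is exactly hypothesis $(c)$.

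Finally, for $(b) \Rightarrow (a)$ I would take an arbitrary finite-dimensional left (or right) $K$-subspace $V \subset \K$. By Corollary \ref{corol-Hartwig-1} and Proposition \ref{prop-Hartwig-1}(4), $\K = \bigoplus_{\mu \in \M/W} K[\mu]K$, so $V$ is contained in a finite sub-sum $\bigoplus_{\mu \in F} K[\mu]K$ for some finite set $F$ of orbit representatives; enlarging $F$ to the $G$-saturation $S$ of the supports involved, $S$ is $G$-invariant and finite-orbit, and $V \cap U \subset U(S)$. Since $U(S)$ is a finitely generated $\Gamma$-module by $(b)$ and $\Gamma$ is Noetherian, the submodule $V \cap U$ is finitely generated on both sides, so $U$ is a Galois order. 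The main obstacle I anticipate is verifying carefully, in the $(c)\Rightarrow(b)$ step, that $f$ can be chosen so that $P_\Oo^S(f)$ is nonzero on the relevant elements — i.e., that the dichotomy in Proposition \ref{horrible-proposition}(4) can be resolved in the favorable direction simultaneously for a spanning set — which is where the Harish-Chandra (quasi-central) hypothesis on $\Gamma$ and the separating condition on $\M$ genuinely enter; the rest is bookkeeping with Noetherian modules and exact sequences.
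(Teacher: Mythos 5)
Your proposal is correct and rests on the same ingredients as the paper's proof: Lemma \ref{US}, Proposition \ref{Galois-orders-D-sets}, Proposition \ref{horrible-proposition}, and the Noetherianity of $\Gamma$. The only real difference is organizational, in the direction $(c)\Rightarrow(a)$. You run an induction on the number of orbits meeting $S$, peeling off one orbit at a time via a short exact sequence $0 \to U(S\setminus\Oo) \to U(S) \to U(\Oo)$, and you flag as the ``main obstacle'' the need to choose $f$ so that $P_\Oo^S(f)$ is nonzero. The paper avoids this entirely by invoking Proposition \ref{horrible-proposition}(5) in one shot: $P_S(f)=(P_{\Oo_1}^S(f),\dots,P_{\Oo_k}^S(f))$ embeds $U(S)$ as a $\Gamma$-subbimodule of $\bigoplus_{i=1}^k U(\Oo_i)$, which is finitely generated on each side by $(c)$, so Noetherianity finishes it with no induction. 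Your worry is in any case harmless even in your formulation: if $P_\Oo^S(f)=0$ for every $f$, then by Proposition \ref{horrible-proposition}(1) every $u\in U(S)$ lies in $U(S\setminus\Oo)$, so $U(S)=U(S\setminus\Oo)$ and the inductive step is vacuous; otherwise the dichotomy in item (4) hands you a good $f$. Your $(b)\Rightarrow(a)$ also argues directly from the definition of Galois order (containing $V\cap U$ in some $U(S)$), whereas the paper routes $(c)\Rightarrow(a)$ through the $D^r$-criterion of Proposition \ref{Galois-orders-D-sets} applied to $S=\bigcup_{u\in M}\supp u$; both are valid and of comparable length. The $(a)\Rightarrow(b)$ step matches the paper's (a finite $K$-basis of $U(S)K$ plus $D^r(U(S))=U(S)$).
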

\begin{proof}
    (a) implies (b): Let $u_1, \ldots, u_s$ be a $K$-basis of $U(S)K$. $D^r(\sum u_i \Gamma)= (\sum u_i \Gamma)K \cap U$ by the proof of the Proposition \ref{Galois-orders-D-sets}. By Corollary \ref{corol-Hartwig-1} this is the same as $U(S)K \cap U$, which is a finitely generated right $\Gamma$-module as $U$ is a Galois order. But, again by the proof of the Proposition \ref{Galois-orders-D-sets},  $U(S)K \cap U=D^r(U(S))$, which is just $U(S)$ by Lemma \ref{US}.

    (b) implies (c) is trivial.

    To show that (c) implies (a) we will use Proposition \ref{Galois-orders-D-sets}. Assume $M \subset U$ is a finitely generated right $\Gamma$-submodule, and let $S=\bigcup_{u \in M} \supp \, u$. $D^r(M) \subset D^r(U(S))=U(S)$, by Lemma \ref{US}. As $\Gamma$ is Noetherian, it remains to show that $U(S)$ is a finitely generated right $\Gamma$-module. But by the last item of the previous Proposition, $U(S)$ is a $\Gamma$-submodule of $\bigoplus U(\Oo_i)$ for certain orbits $\Oo$. Again using the fact that $\Gamma$ is Noetherian, we are done.
\end{proof}

Let $\Mm$ be a lift of $\m \in \SpecMax \Gamma$ to $\SpecMax \Lambda$. Let $\M_\m$  be the $\m$-stabilizer of $\Mm$ and let and assume it is finite. The finiteness of $\M_\m$ does not depend on the choice of $\Mm$ because it is unique up to $W$-conjugation. Similarly we define $W_\m$ as the $W$-stabilizer of $\Mm$.

By Hilbert's Nullstellensatz, for each $\m \in \SpecMax \, \Gamma$, $\Gamma/\m \simeq \k$. If $f \in \Gamma$, $f(\m)$, the evaluation at $\m$, is the image of $f$ in the above canonical isomorphism. If $\Mm$ is a lift from $\m$ to $\Lambda$, we set $f(\Mm):=f(\m)$. It is clearly well-defined.

Let $\m, \n \in \SpecMax \Gamma$ and $\Mm, \Nn$ lifts of them to the maximal spectrum of $\Lambda$. We define

\[ S(\m,\n)=\{\mu \in \M|(\exists w_1,w_2 \in W)(w_2 \Nn=\mu w_1 \Mm) \}.\]

This set clearly does not depend on the choice of $\Mm, \Nn$.

\begin{lemma}\label{G-T-1}
Let $\m \in \SpecMax \, \Gamma$, and assume $\M_\m$ is finite. Then $|S(\m,\m)| \leq \frac{|W|^2}{|W_\m|^2}|\M_\m|$.
    
\end{lemma}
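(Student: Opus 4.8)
The plan is to reinterpret $S(\m,\m)$ as the set of elements of $\M$ that carry one point of the $W$-orbit of $\Mm$ in $\SpecMax\,\Lambda$ to another point of that orbit, and then to estimate its size by applying orbit--stabilizer twice.

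First I would unwind the definition. For $\mu\in\M$ and $w_1,w_2\in W$, the relation $w_2\Mm=\mu w_1\Mm$ inside $\SpecMax\,\Lambda$ says precisely that the automorphism $\mu$ sends the maximal ideal $w_1\Mm$ to the maximal ideal $w_2\Mm$. Let $P_1,\dots,P_q$ denote the distinct elements of the orbit $W\Mm=\{\,w\Mm : w\in W\,\}$; orbit--stabilizer for the $W$-action on $\SpecMax\,\Lambda$ gives $q=|W|/|W_\m|$. Then
\[
S(\m,\m)=\{\,\mu\in\M : \mu(P_i)\in W\Mm \text{ for some } i\,\}=\bigcup_{i,j=1}^{q}\{\,\mu\in\M : \mu(P_i)=P_j\,\}.
\]

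Next I would bound each of the $q^2$ sets occurring in this union. Fix $i,j$, and suppose $\{\mu\in\M:\mu(P_i)=P_j\}$ is nonempty, say it contains $\mu_0$. Since $\M$ is a group under the standing assumptions, this set is then the left coset $\mu_0\,\M_{P_i}$, where $\M_{P_i}=\{\nu\in\M:\nu(P_i)=P_i\}$ is the $\M$-stabilizer of $P_i$. Writing $P_i=w\Mm$ and using that $W$ normalizes $\M$ inside $\Autk\,\Lambda$ (the conjugation action from the Galois ring setting, so that $w^{-1}\nu w\in\M$ for every $\nu\in\M$), the condition $\nu(w\Mm)=w\Mm$ is equivalent to $(w^{-1}\nu w)(\Mm)=\Mm$; hence $\M_{P_i}=w\,\M_\m\,w^{-1}$ and $|\M_{P_i}|=|\M_\m|$. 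Therefore each of the $q^2$ sets has at most $|\M_\m|$ elements, and the union bound yields $|S(\m,\m)|\le q^2|\M_\m|=\frac{|W|^2}{|W_\m|^2}|\M_\m|$.

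This is essentially pure bookkeeping with the $W$-action on $\SpecMax\,\Lambda$ together with the normalization $w^{-1}\M w=\M$; both inputs are immediate from the definition of a Galois ring setting, and the finiteness of $\M_\m$ (equivalently of each $\M_{P_i}$) is exactly the running hypothesis, used only to make the bound finite. I do not expect any genuine obstacle here — the only mild point of care is to keep the left/right conventions for composing automorphisms in $\M$ and $W$ consistent throughout the computation.
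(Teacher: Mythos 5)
Your proof is correct and follows essentially the same route as the paper's (much terser) argument: both count the $(|W|/|W_\m|)^2$ ordered pairs of points in the $W$-orbit of $\Mm$ and bound the number of $\mu\in\M$ carrying one to the other by $|\M_\m|$ via a coset-of-the-stabilizer argument. Your write-up just makes explicit the use of $\M$ being a group and of $W$ normalizing $\M$ to identify $\M_{P_i}$ with a conjugate of $\M_\m$, which the paper leaves implicit.
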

\begin{proof}
Let $\Mm_1, \Mm_2$ be two liftings of $\m$ to maximal ideals of $\Lambda$. There are at most $
|\M_\m|$ elements $\mu \in \M$ with $\mu \Mm_1=\Mm_2$, and at most $\frac{|W|}{|W_\m|}$ elements $w \in W$ with $w \Mm_1 = \Mm_2$. The Lemma follows.
\end{proof}

\begin{lemma}\label{G-T-2}
Let $U$ be a Galois $\Gamma$-ring with $\Gamma$ a Harish-Chandra subalgebra, and pick $\m \in \SpecMax \, \Gamma$ such that $\M_\m$ is finite. Set $T=S(\m,\m)$. If $U=U\m$, then for every $k \geq 0$, there are $\gamma_k \in \Gamma \setminus \m$, $u_j \in U$, $\chi_j \in \m^k$, $j=1, \ldots, N$, such that $\gamma_k =\sum_{j} u_j \chi_j$, and $\supp \, u_j \subset T$
\end{lemma}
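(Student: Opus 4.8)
The strategy is to reduce the statement, via the decomposition $\K = \bigoplus_{\mu \in \M/W} K[\mu]K$, to a statement about a single $\Gamma$-bimodule of "relative size one," namely $U(T)$, and then to extract the required expression $\gamma_k = \sum_j u_j\chi_j$ by applying $\m$-adic/filtration arguments on that piece. First I would observe that from $U = U\m$ we get $1 = \sum_i v_i m_i$ with $v_i\in U$, $m_i\in\m$; iterating $k$ times yields $1 = \sum_j v_j' \chi_j$ with $\chi_j \in \m^k$. The problem is that the supports of the $v_j'$ need not lie in $T = S(\m,\m)$. To fix this, I would project onto the part of $U$ supported on $T$. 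The key is that for a support $\mu \notin T$, the element $f_{\{\mu\}}^r$ (in the notation of Proposition \ref{horrible-proposition}) acts on $[a\mu]$ as multiplication by $\prod_{s}(f - s^{-1}\mu(f))$, and by the definition of $T = S(\m,\m)$ and of $f(\Mm)$, one can choose $f\in\Gamma$ with $f(\m)\neq 0$ so that this operator, evaluated at $\m$, is a nonzero scalar precisely for those $\mu\notin T$ — whereas for $\mu\in T$ the relevant factors vanish at $\m$. Concretely, using Proposition \ref{horrible-proposition}(2), $f_{S^c}^r v_j' \in U(T)$ where $S^c = \supp v_j' \setminus T$, and iterating over all finitely many offending cosets (there are finitely many since $U$ is locally finite over $K$ and $T$ is $W$-invariant) produces, after multiplying by a suitable product of such $f$'s, a $\gamma' \in \Gamma$ with $\gamma'(\m)\neq 0$ such that $\gamma' \cdot 1 = \gamma'$ can be rewritten as $\sum u_j\chi_j$ with each $u_j$ having support in $T$.

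More carefully, the argument should go: write $1 = \sum_j v_j'\chi_j$ with $\chi_j\in\m^k$ as above; set $S_0 = \bigcup_j \supp v_j'$, a finite $W$-invariant (after enlarging) subset of $\M$; let $\Oo_1,\dots,\Oo_r$ be the $G$-orbits in $S_0 \setminus T$. For each such orbit pick $f^{(i)}\in\Gamma$ with $f^{(i)}(\m)\neq 0$ but with $f^{(i)} - s^{-1}\mu(f^{(i)})$ vanishing appropriately — this is where I would invoke that $\mu\notin S(\m,\m)$ forces $\mu$ to move the lift $\Mm$ of $\m$ to a different fiber, so evaluation at $\Mm$ of $f^{(i)}$ and of $s^{-1}\mu(f^{(i)})$ can be made to differ, making the product $\prod_s(f^{(i)} - s^{-1}\mu(f^{(i)}))$ a nonzero scalar at $\m$. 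Then $\gamma' := \prod_i \big(\text{coefficient combination of } f^{(i)}_{\Oo_i}\big)$ lies in $\Gamma$, is nonzero mod $\m$, i.e. $\gamma_k := \gamma' \in \Gamma\setminus\m$, and $\gamma_k = \gamma_k\cdot 1 = \sum_j (f^{(1)}_{\cdots}\cdots f^{(r)}_{\cdots} v_j')\chi_j$ where each coefficient now lies in $U(T)$ by repeated application of Proposition \ref{horrible-proposition}(2). Renaming gives $\gamma_k = \sum_j u_j\chi_j$ with $u_j\in U$, $\supp u_j\subset T$, $\chi_j\in\m^k$, as desired.

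The main obstacle I expect is the bookkeeping needed to guarantee that $\gamma'\notin\m$: one must choose the separating functions $f^{(i)}$ so that each $\prod_{s\in\{\mu\}}(f^{(i)}-s^{-1}\mu(f^{(i)}))$ does \emph{not} vanish at $\m$ for $\mu$ in the "bad" orbits, while the whole construction stays inside $\Gamma$ (not merely $K\otimes K$) — this uses that $\M_\m$ being finite controls $|S(\m,\m)|$ via Lemma \ref{G-T-1}, so only finitely many factors are involved, and uses Hilbert's Nullstellensatz / the definition of $f(\Mm)$ to realize the required nonvanishing by an honest choice of $f\in\Gamma$. A secondary point is checking that enlarging $S_0$ to be $W$-invariant does not disturb anything, which is immediate since $U(S)$ only depends on $S$ through its intersection with the relevant cosets and $T$ is already $W$-invariant. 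Modulo these checks, the statement follows the template of \cite[Lemma 5.2, and its use]{FO}, adapted to drop the characteristic hypothesis as in the rest of this section.
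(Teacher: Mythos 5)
Your proposal is correct and follows essentially the same route as the paper's proof: iterate $1=\sum_i w_i\zeta_i$ to reach coefficients in $\m^k$, then apply the support-projection operator $f^r_{Z}$ of Proposition \ref{horrible-proposition} with $f\in\Gamma$ chosen to separate $\Mm$ from $z(\Mm)$ for $z\notin T$ (possible precisely because such $z$ move $\Mm$ to a different fiber over $\SpecMax\,\Gamma$), so that the resulting element of $\Gamma$ lies outside $\m$ while the coefficients land in $U(T)$. The only point you leave implicit is the identity $f^r_Z(u\gamma)=f^r_Z(u)\gamma$ for $\gamma\in\Gamma$, which the paper records explicitly (it follows from commutativity of $\Gamma$) and which is what lets the factors $\chi_j\in\m^k$ survive the projection.
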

\begin{proof}
The condition $U=U\m$ just says that $1 \in U\m$, and hence

\[ (\dagger) \, 1=\sum_i w_i \zeta_i, w_i \in U, \zeta_i \in \m.\]

We will use $(\dagger)$ as the base of a induction argument to show the statement of the lemma without assuming $\supp \, u_j \subset T$ yet. So assume $k > 1$ and assume that there  $u_j \in U$, $\chi_j \in \m^k$, $j=1, \ldots, N$, such that $1 =\sum_{j} u_j \chi_j$. Then

\[ (\ddagger) \, 1=\sum_{j} u_j \chi_j =\sum_{j} u_j (\sum_i  w_i \zeta_i)\chi_j = \sum_j \sum_i u_j w_i (\chi_j \zeta_i), \]

and as $\chi_j \zeta_i \in \m^{k+1}$ we are done for the first half of the proof.

Now, set $Z=  \bigcup \supp \, u_j \setminus T$. As $T \cap Z = \emptyset$, for any lifting of $\m$ to $\Mm$ in $\SpecMax \, \Lambda$, we have that $z(\Mm) \neq \Mm$, for all $z \in Z$. Then there exists $f \in \Gamma$ with $f(\Mm) \neq f(z(\Mm))$ for all $z \in Z$. By Proposition \ref{horrible-proposition}(3), introducing $f_Z^r$, we can assume without loss of generality that $\prod_{z \in Z}[f(\Mm)-f(t(\Mm))]=1$. In particular $f_Z^r \in 1 + \Gamma \otimes \m + \m \otimes \Gamma$. Hence $\gamma_k=f_Z^r(1) \in 1+\m$. Replacing $u_j$ by $f_Z^r(u_j)$, we have by Proposition \ref{horrible-proposition}(2) that $u_j \in U(T)$. If $u \in U$, $\gamma \in \Gamma$, $f_Z^r(u \gamma)= f_Z^r(u) \gamma$, since $\Gamma$ is commutative and  $f_Z^r \in 1 + \Gamma \otimes \m + \m \otimes \Gamma$. Hence, applying $f_Z^r$ to $(\ddagger)$ gives us the desired result.
\end{proof}

The next theorem is a relevant improvement of Theorem \ref{lifting-principal-Galois-orders}

\begin{theorem}\label{lifting-Galois-orders}
    Let $U$ be a Galois $\Gamma$-order. Then for every maximal ideal $\m$ of $\Gamma$ with $\M_\m$ finite, there is an $U$-module $\mathsf{M} \in \Irred(n)$. Moreover, for each $v \in \mathsf{M}$, $\m v=0$.
\end{theorem}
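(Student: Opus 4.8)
The plan is to realise $\mathsf M$ as an irreducible subquotient of $U/U\m$, so that the whole difficulty is concentrated in the single nonvanishing statement $U\m\neq U$; everything after that is formal. I first collect standing facts: since $U$ is a Galois $\Gamma$-order, $\Gamma$ is Noetherian and a Harish-Chandra subalgebra (Proposition \ref{Gamma-HC}); since $\Lambda$ is affine, $\Gamma=\Lambda^W$ is affine too (Noether's theorem, valid in the modular case), so $\Gamma/\m\simeq\k$ and $\m\in\cfs\,\Gamma$. Moreover $T:=S(\m,\m)$ is a $G$-invariant subset of $\M$ (an easy conjugation computation), it contains $e$, and, because $\M_\m$ is finite, it is itself finite (Lemma \ref{G-T-1}); hence by Theorem \ref{less-horrible-theorem} the $\Gamma$-bimodule $U(T)$ is finitely generated as a right $\Gamma$-module, say $U(T)=\sum_{i=1}^{m}v_i\Gamma$ with $v_1=1$.

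The crux is $U\m\neq U$. Suppose for contradiction that $U=U\m$. By Lemma \ref{G-T-2}, for every $k\geq 0$ there are $\gamma_k\in\Gamma\setminus\m$, elements $u_j\in U$ with $\supp u_j\subseteq T$, and $\chi_j\in\m^{k}$ with $\gamma_k=\sum_j u_j\chi_j$; writing each $u_j\in U(T)$ in terms of the $v_i$ yields $\gamma_k\in U(T)\,\m^{k}$. Now localise the right $\Gamma$-module $U(T)$ at $S=\Gamma\setminus\m$. On the one hand, the image of $\gamma_k$ in $U(T)_\m$ lies in $U(T)_\m\,(\m\Gamma_\m)^{k}$; on the other hand $\gamma_k=1\cdot\gamma_k$ with $\gamma_k$ a unit of the local ring $\Gamma_\m$, so the image of $1$ lies in $U(T)_\m\,(\m\Gamma_\m)^{k}$ for every $k$. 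By the Krull intersection theorem ($\Gamma_\m$ Noetherian local, $U(T)_\m$ finitely generated) this intersection is $0$, so the image of $1$ vanishes in $U(T)_\m$; hence some $s\in\Gamma\setminus\m$ annihilates $1\in U(T)$, forcing $s=0$ — impossible, since $U(T)\subseteq\L$ and a nonzero element of $\Gamma$ acts injectively on $\L$. Thus $U\m\neq U$. I expect this to be the main technical obstacle: having $U(S(\m,\m))$ finitely generated over $\Gamma$ available (which is exactly where the Galois-order hypothesis enters, via Theorem \ref{less-horrible-theorem}), and then correctly passing to the local ring and applying Krull's theorem; keeping the left/right $\Gamma$-module structures straight throughout also needs care.

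It remains to build the module. Put $\mathsf M_0:=U/U\m$; it is a nonzero ($1\notin U\m$) cyclic left $U$-module generated by $v_0:=1+U\m$, and $\m v_0=0$ because $\m\subseteq U\m$. Using cyclicity and Zorn's lemma choose a maximal proper $U$-submodule $\mathsf M_1\subsetneq\mathsf M_0$, and set $\mathsf M:=\mathsf M_0/\mathsf M_1$; this is an irreducible $U$-module generated by the nonzero image $\bar v_0$ of $v_0$, which is a weight vector annihilated by $\m$. By \cite[Proposition 14]{DFO} an irreducible module generated by a generalized weight vector is a Harish-Chandra module, and $\bar v_0\in\mathsf M^\m\neq 0$ shows $\m\in\Supp\mathsf M$; hence $\mathsf M\in\Irred(\m)$, and by construction it is generated by a vector killed by $\m$, which is the final assertion. (If one wants $\m$ to act by zero on all of $\mathsf M$, one runs the same construction with $U/U\m U$ in place of $U/U\m$ whenever the two-sided ideal $U\m U$ is proper, its nonvanishing being verified by the localisation argument above.)
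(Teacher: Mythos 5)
Your proof is correct and follows essentially the same route as the paper's: finiteness of $T=S(\m,\m)$ via Lemma \ref{G-T-1}, finite generation of $U(T)$ via Theorem \ref{less-horrible-theorem}, Lemma \ref{G-T-2}, a separation argument ruling out $U\m=U$, and then the same simple-quotient construction. The only real difference is that you obtain the contradiction by localizing at $\m$ and invoking the Krull intersection theorem where the paper applies Artin--Rees to $U(S)\m^n\cap\Gamma$ directly --- these are interchangeable --- and, like the paper, you genuinely establish the final clause ``$\m v=0$'' only for the cyclic generator (your parenthetical fix via $U\m U$ does not follow from $U\m\neq U$, but the paper's own justification is no stronger).
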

\begin{proof}
    By Proposition \ref{Gamma-HC}, $\Gamma$ is a Harish-Chandra subalgebra. By Lemma \ref{G-T-1}, $T=S(\m,\m)$ is finite. So by Theorem \ref{less-horrible-theorem}(2), $U(S)$ is a finitely generated left and right $\Gamma$-module. By the Artin-Rees Lemma (cf. \cite[Theorem 8.5]{Matsumura}), there exists an $n_0 \in \N$ such that for every $n \geq n_0$, $U(S)\m^n \cap \Gamma=(U(S)\m^{n_0}\cap \Gamma)\m^{n-n_0}$.

    Lets show that $U= U \m$ is impossible. Otherwise, by Lemma \ref{G-T-2}, for every $n>n_0$ there exist $\gamma_n \notin \m$ that belongs to  $U(S)\m^n \cap \Gamma$. But we can't have $\gamma_n$ belonging to $(U(S)\m^{n_0}\cap \Gamma)\m^{n-n_0}$, which is the same set. Hence, $U \neq U\m$.

    In this way, the module $U/U \m$ has a simple quotient $\mathsf{M} \in \Irred(\m)$, just like in the proof of Theorem \ref{lifting-principal-Galois-orders}. The module is generated by $1+M$, in the notation of the proof of that theorem. Hence $\m \mathsf{M}=0$.
\end{proof}

The next Lemma has the same proof as \cite[Lemma 4.6]{FO}:

\begin{lemma}\label{G-T-3}
    Let $\m, \n$ be maximal ideals of $\Gamma$ and $T=S(\m,\n)$. For every $m,n \geq 0$, $U=U(T)+\n^m U+U \m^n$. Moreover, for every $u \in U$ and $k \geq 0$ there is $u_k \in U(T)$ such that $u \in u_k + \m^k u \Gamma + \Gamma u \m^k$.
\end{lemma}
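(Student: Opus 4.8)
The plan is to isolate a single ``one--step'' approximation statement --- for each $u\in U$ there is $u'\in U(T)$ with $u-u'\in\n U+U\m$, where $T:=S(\m,\n)$ --- and to deduce both assertions from it by elementary bookkeeping. First I would fix lifts $\Mm,\Nn\in\SpecMax\Lambda$ of $\m,\n$ and recall the ingredients available in the present setting (where $\Gamma$ is a Harish--Chandra subalgebra): $U(T)$ is a $\Gamma$-subbimodule of $U$; for $u\in U$ the set $Z:=\supp u\setminus T$ is finite; and by Proposition~\ref{horrible-proposition}(2) one has $f_Z^r\cdot u\in U(T)$ for every $f\in\Gamma$ (here $f_Z^r$ is the operator $f_{S^c}^r$ of that proposition with $S=T$, so $S^c=Z$), with an explicit formula for its action on elements $[a\mu]$ recorded in Proposition~\ref{horrible-proposition}(3).

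For the one--step statement, the key observation is that $z\notin S(\m,\n)$ forces $z\Mm\notin W\Nn$ for every $z\in Z$, so $z\Mm$ and $\Nn$ have distinct images in $\SpecMax\Gamma$ (the fibres of $\SpecMax\Lambda\to\SpecMax\Gamma$ are exactly the $W$-orbits, since $\Lambda$ is a finite $\Gamma$-module). Hence $f\mapsto f(z\Mm)-f(\Nn)$ is a nonzero $\k$-linear functional on $\Gamma$ for each $z$; as $Z$ is finite and $\k$ is algebraically closed, hence infinite, a finite union of proper subspaces cannot exhaust $\Gamma$, and I may choose a single $f\in\Gamma$ with $f(z\Mm)\neq f(\Nn)$ for all $z\in Z$. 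Since $\Gamma/\m\simeq\Gamma/\n\simeq\k$ and $(z^{-1}(f))(\Mm)=f(z\Mm)$, each factor $f\otimes 1-1\otimes z^{-1}(f)$ of $f_Z^r$ is congruent modulo the ideal $\n\otimes\Gamma+\Gamma\otimes\m$ to the scalar $f(\Nn)-f(z\Mm)$ times $1\otimes 1$, so
\[ f_Z^r\in c\,(1\otimes 1)+\n\otimes\Gamma+\Gamma\otimes\m,\qquad c:=\prod_{z\in Z}\bigl(f(\Nn)-f(z\Mm)\bigr)\in\k^\times. \]
Putting $g:=c^{-1}f_Z^r$ gives $u':=g\cdot u\in U(T)$ by Proposition~\ref{horrible-proposition}(2), while $u-u'=(1-g)\cdot u\in(\n\otimes\Gamma+\Gamma\otimes\m)\cdot u\subseteq\n U+U\m$. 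This is the exact two--ideal analogue of the manipulation with $f_Z^r$ in the proof of Lemma~\ref{G-T-2}.

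Granting the one--step statement, the first identity bootstraps at once. Applying it to every $u\in U$ gives $U=U(T)+\n U+U\m$. Using $\n,\m\subseteq\Gamma\subseteq U$ and that $U(T)$ is a $\Gamma$-bimodule, $\n U\subseteq U$ and $U\m\subseteq U$, so
\[ \n U=\n\bigl(U(T)+\n U+U\m\bigr)\subseteq U(T)+\n^2U+U\m, \]
whence $U=U(T)+\n^2U+U\m$ and, inductively, $U=U(T)+\n^mU+U\m$ for all $m$; multiplying on the right by $\m$ and iterating the same way yields $U=U(T)+\n^mU+U\m^n$ for all $m,n\geq0$. The refined per--element statement comes from running the one--step construction (and its left--handed mirror, using $f_Z^l$) repeatedly while collecting a single $U(T)$-representative: from $u=u'+\sum\zeta_iv_i+\sum w_j\chi_j$ one applies the construction to the $v_i$ and $w_j$, absorbs the new $U(T)$-parts into the representative, and raises the $\m$- and $\n$-orders at each stage --- the careful bookkeeping is identical to that in \cite[Lemma~4.6]{FO}.

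I expect the choice of $f$ in the one--step construction to be the only genuine obstacle: one must separate finitely many points $z\Mm$ from the single point $\Nn$ by one element of the possibly small invariant ring $\Gamma=\Lambda^W$, and this is exactly where algebraic closedness of $\k$ and the orbit/separating structure packaged into $S(\m,\n)$ enter. Everything else is routine manipulation of $\Gamma$-bimodules together with the formulas for $f_S^r$ already established in Proposition~\ref{horrible-proposition}.
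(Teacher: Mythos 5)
Your proof is correct and is essentially the argument of \cite[Lemma 4.6]{FO} that the paper itself cites without reproducing: the one-step congruence $f_Z^r\equiv c\,(1\otimes 1)\pmod{\n\otimes\Gamma+\Gamma\otimes\m}$ with $c\in\k^\times$, obtained by evaluating at $(\Nn,z\Mm)$ and using that $z\notin S(\m,\n)$ separates the relevant points of $\SpecMax\,\Gamma$, is exactly the right mechanism, and the bootstrap to arbitrary powers is routine. The only point worth flagging is that your construction naturally produces the error term $\n^k u\Gamma+\Gamma u\m^k$ rather than the $\m^k u\Gamma+\Gamma u\m^k$ printed in the statement; the latter is evidently a typo, harmless in the paper's only application, where $\m=\n$.
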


We now prepare for proof of the Main Theorem. We recall some facts from \cite{DFO}. Let $U$ be an associative algebra, $\Gamma$ a commutative Harish-Chandra subalgebra.

\begin{definition}
    Let $\m \in \SpecMax \, \Gamma$. Let $\widehat{\Gamma}_\m$ be the $\m$-adique completion of $\Gamma$ at $\m$ and let $\A(\m,\m)$ be the $\widehat{\Gamma}_\m$-bimodule $\varprojlim_{m,n} U/(\m^m U + U \m^n)$
\end{definition}

The following is \cite[Corollary 19]{DFO}

\begin{theorem}\label{main-DFO}
Let $U$ be an associative algebra and $\Gamma$ a commutative Harish-Chandra subalgebra. Let $\m \in \cfs \, \Gamma$. If $\A(\m,\m)$ is a finitely generated left and right $\widehat{\Gamma}_\m$ module, then $\Irred(\m)$ is finite and for every $\mathsf{M} \in \Irred(\m)$, $\dim \mathsf{M}^\m < \infty$.
\end{theorem}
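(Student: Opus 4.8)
The plan is to invoke the general machinery of \cite{DFO} attaching to $(U,\Gamma)$ the category $\A$ with object set $\cfs\,\Gamma$ and morphism spaces $\A(\m,\n)=\varprojlim_{m,n}U/(\m^mU+U\n^n)$, and then to reduce the assertion to an elementary analysis of the single ring $\A(\m,\m)$. First I would recall from \cite{DFO} that, because $\Gamma$ acts locally finitely on any $M\in\H\C(U,\Gamma)$, the generalized weight space $M^\m$ is naturally a module over $\A(\m,\m)$: an element of $\A(\m,\m)$ is represented by some $u\in U$ modulo $\m^mU+U\m^N$ for all $m,N$, and on a vector $v\in M^\m$ with $\m^Nv=0$ one lets it act by $\pi_\m(uv)$, where $\pi_\m\colon M\to M^\m$ is the projection from the weight decomposition $M=\bigoplus_\n M^\n$. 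The two key structural facts from \cite{DFO} are then: (i) if $M$ is irreducible with $\m\in\Supp M$, then $M^\m$ is an irreducible $\A(\m,\m)$-module; and (ii) the assignment $M\mapsto M^\m$ is injective on isomorphism classes of such $M$ (one reconstructs $M$ as the $U$-submodule generated by a weight vector). Granting (i) and (ii), $M\mapsto M^\m$ embeds $\Irred(\m)$ into the set of isomorphism classes of irreducible $\A(\m,\m)$-modules, so it suffices to show that this latter set is finite and that each of its members is finite-dimensional over $\k$.

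Here is where the hypothesis enters. Since $\m\in\cfs\,\Gamma$ the residue field $\Gamma/\m$ is finite over $\k$, and $\widehat{\Gamma}_\m$ is a complete Noetherian local ring with maximal ideal $\m\widehat{\Gamma}_\m$ and residue field $\Gamma/\m$. Assuming $\A(\m,\m)$ is module-finite over $\widehat{\Gamma}_\m$, it is a Noetherian ring, it is $\m$-adically complete (a finitely generated module over a complete Noetherian local ring is complete), and $\bar A:=\A(\m,\m)/\m\widehat{\Gamma}_\m\,\A(\m,\m)$ is finitely generated over the field $\Gamma/\m$, hence a finite-dimensional $\k$-algebra. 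By the usual lifting argument, the Jacobson radical $J(\A(\m,\m))$ is the preimage of $J(\bar A)$ under $\A(\m,\m)\onto\bar A$; in particular $\A(\m,\m)/J(\A(\m,\m))\cong\bar A/J(\bar A)$ is a finite-dimensional semisimple $\k$-algebra, so it has only finitely many isomorphism classes of irreducible modules, each finite-dimensional over $\k$. Every irreducible $\A(\m,\m)$-module is annihilated by $J(\A(\m,\m))\supseteq\m\,\A(\m,\m)$, so these are exactly the irreducible $\A(\m,\m)$-modules, and in particular $\m$ acts as zero on each of them. Combined with the first paragraph this yields $|\Irred(\m)|<\infty$ and $\dim_\k M^\m<\infty$ for every $M\in\Irred(\m)$, with the extra information that $\m M^\m=0$.

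I expect the main obstacle to lie entirely in the first paragraph, namely in facts (i) and (ii) --- establishing that $M^\m$ is genuinely irreducible over $\A(\m,\m)$ and that $M$ is recovered from it. This is precisely the content of the equivalence-type results of \cite{DFO}, and it is here that both halves of the Harish-Chandra hypothesis are used: quasi-commutativity ($\operatorname{Ext}^1(\mathsf S_\m,\mathsf S_\n)=0$ for $\m\ne\n$) is what makes $M=\bigoplus_\n M^\n$ a decomposition over the category $\A$ and makes $\pi_\m$ a morphism, so that the $U$-submodule lattice of $M$ is detected block-by-block by the $\A(\m,\m)$-submodule lattice of $M^\m$; and quasi-centrality controls the size of $\A(\m,\m)$ and the behaviour of the inverse limits. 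The ring-theoretic step in the second paragraph, by contrast, is routine once module-finiteness of $\A(\m,\m)$ over $\widehat{\Gamma}_\m$ is granted. Since the construction and its properties are carried out in full in \cite{DFO}, the cleanest write-up is to quote it directly (the statement is \cite[Corollary 19]{DFO}), reserving the real work for verifying its hypothesis in the situations of interest.
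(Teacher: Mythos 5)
The paper gives no proof of this statement at all: it is quoted verbatim as \cite[Corollary 19]{DFO}, which is exactly the course of action your final paragraph recommends, so your proposal matches the paper's approach (and your sketch of the internal mechanism of \cite{DFO} --- the functor $M\mapsto M^\m$, irreducibility and injectivity on isomorphism classes, plus Noetherianity and completeness of $\A(\m,\m)$ --- is a fair reconstruction). One caution: the ``extra information'' that $\m M^\m=0$ does not follow from your argument, because $\Gamma$ is not central in $U$, so $\m\,\A(\m,\m)$ is a right ideal but not obviously two-sided, and the reduction of $J(\A(\m,\m))$ modulo it is not justified; indeed $M^\m$ is in general only a \emph{generalized} weight space on which $\m$ acts nilpotently rather than by zero, and neither the theorem nor \cite[Corollary 19]{DFO} asserts more.
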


We are ready to prove the main result of Galois orders theory in prime characteristic.

\begin{theorem}\label{Main-Theorem}
    Let $U$ be a Galois $\Gamma$-order, and $\m$ a maximal ideal of $\Gamma$ such that $\M_\m$ is finite. Then $\Irred(\m)$ is finite and for every $\mathsf{M} \in \Irred(\m)$, $\dim \mathsf{M}^\m < \infty$.
\end{theorem}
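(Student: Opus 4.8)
The plan is to reduce the statement to Theorem \ref{main-DFO} by showing that $\A(\m,\m) = \varprojlim_{m,n} U/(\m^m U + U\m^n)$ is a finitely generated left and right $\widehat{\Gamma}_\m$-module, given that $\M_\m$ is finite. First I would invoke Proposition \ref{Gamma-HC} to note that $\Gamma$ is automatically a Harish-Chandra subalgebra since $U$ is a Galois $\Gamma$-order; this makes Theorem \ref{main-DFO} applicable in principle. Next I would set $T = S(\m,\m)$, which is finite by Lemma \ref{G-T-1} because $\M_\m$ is finite. The key structural input is Lemma \ref{G-T-3} (with $\n = \m$): for every $u \in U$ and every $k \geq 0$ there is $u_k \in U(T)$ with $u \in u_k + \m^k u\Gamma + \Gamma u \m^k$; equivalently $U = U(T) + \m^k U + U\m^k$ for all $k$. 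This says that in the bimodule quotient $U/(\m^m U + U\m^n)$, the image of $U(T)$ already spans — so the natural map $U(T) \to \A(\m,\m)$ has dense image, and in fact, passing to the limit, $\A(\m,\m)$ is topologically generated over $\widehat{\Gamma}_\m$ by the image of $U(T)$.

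Then the decisive point is that $U(T)$ is a \emph{finitely generated} left and right $\Gamma$-module: this is exactly Theorem \ref{less-horrible-theorem}, part (c)$\Rightarrow$(b) direction (or rather (a)$\Rightarrow$(b)), since $U$ is a Galois order and $T$ is a $G$-invariant finite subset of $\M$. Pick finitely many generators $v_1,\dots,v_r$ of $U(T)$ as a left $\Gamma$-module (and handle the right side symmetrically). I would then argue that the images of $v_1,\dots,v_r$ generate $\A(\m,\m)$ as a $\widehat{\Gamma}_\m$-module: given any element of $\A(\m,\m)$, represented by a compatible sequence of classes $\bar u_{m,n} \in U/(\m^m U + U\m^n)$, use Lemma \ref{G-T-3} to replace each $u$ by some $u_k \in U(T)$ modulo $\m^k U + U\m^k$, write $u_k = \sum_i \gamma_i^{(k)} v_i$ with $\gamma_i^{(k)} \in \Gamma$, and check these coefficients converge $\m$-adically to elements of $\widehat{\Gamma}_\m$; the completeness of $\widehat{\Gamma}_\m$ and the finiteness of the generating set make this go through. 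Finally, with $\A(\m,\m)$ finitely generated over $\widehat{\Gamma}_\m$ on both sides, Theorem \ref{main-DFO} yields that $\Irred(\m)$ is finite and $\dim \mathsf{M}^\m < \infty$ for all $\mathsf M \in \Irred(\m)$.

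One technical wrinkle: the standing assumptions in force at this point of the paper require $\Gamma$ to be integrally closed, but Theorem \ref{Main-Theorem} is stated only with $\M_\m$ finite. I would dispose of this first, reducing the general case to the integrally closed case by the standard device: replace $\Gamma$ by its integral closure $\bar\Gamma$ and $U$ by $U\bar\Gamma$ (equivalently, pass to the Galois order generated by $U$ and $\bar\Gamma$ inside $\K$), observe that $\bar\Gamma$ is a finite $\Gamma$-module so that Harish-Chandra modules and the sets $\Irred(\m)$ are controlled by finitely many maximal ideals of $\bar\Gamma$ over $\m$, and that finiteness of $\M_\m$ is insensitive to this change since $\bar\Gamma$ and $\Gamma$ share a field of fractions. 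This is the same reduction used in \cite{FO2}, and I would simply cite it rather than reproduce it.

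\textbf{Main obstacle.} The step I expect to be most delicate is the convergence argument showing that the $\Gamma$-coefficients $\gamma_i^{(k)}$ of the approximants $u_k \in U(T)$ can be chosen compatibly so as to define honest elements of $\widehat{\Gamma}_\m$ — i.e., that the surjection ``(free $\widehat{\Gamma}_\m$-module on $v_1,\dots,v_r$) $\to \A(\m,\m)$'' is well-defined and exhausts the inverse limit, not merely a dense subspace. This requires carefully tracking the two-sided $\m$-adic filtration through Lemma \ref{G-T-3} and using that $U(T)$, being a finitely generated module over the Noetherian ring $\Gamma$, is $\m$-adically separated and that the subspace topology from $U$ agrees with its own $\m$-adic topology (Artin–Rees, as already invoked in the proof of Theorem \ref{lifting-Galois-orders}). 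Everything else is a matter of assembling results already established in the excerpt.
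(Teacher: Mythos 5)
Your proposal follows essentially the same route as the paper: both reduce to Theorem \ref{main-DFO} by showing $\A(\m,\m)$ is a finitely generated $\widehat{\Gamma}_\m$-bimodule, using Lemma \ref{G-T-1} to get $T=S(\m,\m)$ finite, Lemma \ref{G-T-3} to confine $\A(\m,\m)$ to the part coming from $U(T)$, Theorem \ref{less-horrible-theorem} for finite generation of $U(T)$ over $\Gamma$, and the same integral-closure reduction at the end. The convergence issue you flag as the main obstacle is handled in the paper by observing that $\A(\m,\m)$ sits inside the completion of the finitely generated $\Gamma$-module $U(T)$, which is finitely generated over $\widehat{\Gamma}_\m$ by the standard completion theorem, and then invoking Noetherianity of $\widehat{\Gamma}_\m$ to pass to the submodule.
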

\begin{proof}
    Our strategy is to use the previous Theorem. Begin assuming $\Gamma$ integrally closed. By Lemma \ref{G-T-3}, $\A(\m,\m) \subset \varprojlim_{m,n} U(S)/ (\m^m U + U \m^n) \cap U(S)$. By Theorem \ref{less-horrible-theorem}, $U(S)$ is a finitely generated left and right $\Gamma$-module, and hence so is each $ U(S)/ (\m^m U + U \m^n)$, and the generators of $U(S)$ as left or right module generate  $U(S)/ (\m^m U + U \m^n)$ as well. As $\widehat{\Gamma}_\m$ is Noetherian (\cite[Theorem 10.26]{AM}) and $\varprojlim_{m,n} U(S)/ (\m^m U + U \m^n) \cap U(S)$ a finitely generated left and right $\widehat{\Gamma}_\m$-module (\cite[Theorem 8.7]{Matsumura}), $\A(\m,\m)$ is a finitely generated left and right $\widehat{\Gamma}_\m$-module. Hence we are done by Theorem \ref{main-DFO}. If $\Gamma$ is not integrally closed, its integral closure in $K$, $\bar{\Gamma}$ is a finite $\Gamma$-module, and hence each maximal ideal of $\Gamma$ lift to a finite number of maximal ideals of $\bar{\Gamma}$. Hence we can assume $\Gamma$ integrally closed without loss of generality.
\end{proof}

\section{Examples and counter-examples}

This very small section is dedicated to two counter-examples. Essentially all known Galois rings are actually principal Galois orders (cf. \cite{Hartwig}, \cite{Webster}, \cite{Schwarz2}).

The first natural examples of Galois rings that are not Galois are not Galois orders are the alternating analogues of $U(\mathfrak{gl}_n)$, the algebras $\AAA(\mathfrak{gl}_n)$, for $n>2$, introduced in \cite{Jauch}.

Our first counter-example in this section is a Galois ring that is not a Galois order that is elementary in comparison with the example in \cite{Jauch}. Let $U=\k[x^{\pm 1}]$, $\Gamma=\k[x]$, with $\M$ and $W$ trivial, and $K=\K=\L=\k(x)$. $U$ is clearly a Galois $\Gamma$-ring in $K$. It is not a Galois order, for if  $V$ is any $K$-vector space in $K$, $K \cap U= U$, which is not a finitely generated left or right $\Gamma$-module.

Now we will show a simple family of examples of Galois orders which are not principal.

\begin{proposition}\label{finite-module}(\cite[Proposition 2.1]{FO})
If $A$ is an affine algebra and an integral domain, and $\bar{A}$ denotes its integral closure in $\Frac \, A$. Then $\bar{A}$ is a finite $A$-module.
\end{proposition}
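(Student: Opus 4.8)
The plan is to reduce, via Noether normalization, to the classical finiteness of normalization for a polynomial ring, and to handle that case by the trace form — absorbing inseparability into a finite base extension. First I would invoke Noether normalization to find a polynomial subring $P=\k[y_1,\dots,y_d]\subseteq A$ over which $A$ is a finite module; then $L:=\Frac A$ is a finite field extension of $F:=\Frac P=\k(y_1,\dots,y_d)$. Since $P$ is a unique factorization domain it is integrally closed in $F$, and since $A$ is integral over $P$, transitivity of integrality shows that $\bar A$ coincides with the integral closure $B$ of $P$ in $L$. As $P\subseteq A\subseteq B$, a generating set for $B$ over $P$ generates it over $A$, so it suffices to prove that $B$ is a finite $P$-module; from here on $P$ is a polynomial ring and I must bound its integral closure $B$ in a finite extension $L/F$.

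Next I would treat the separable case (which also covers characteristic $0$). After clearing denominators in minimal equations, any $F$-basis of $L$ can be rescaled to a basis $e_1,\dots,e_n\in B$. The trace form $(x,y)\mapsto\Tr_{L/F}(xy)$ is nondegenerate, so it admits a dual basis $e_1^{*},\dots,e_n^{*}$; for $b\in B$, writing $b=\sum_i c_i e_i^{*}$ gives $c_i=\Tr_{L/F}(be_i)\in F$, and since $be_i$ is integral over $P$ its trace is integral over $P$ (being a sum of conjugates, each integral over $P$), hence $c_i\in P$ because $P$ is integrally closed. Thus $B\subseteq\bigoplus_i P e_i^{*}$, a finite module over the Noetherian ring $P$, so $B$ is finite over $P$.

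For the general case, $\operatorname{char}\k=p>0$, I would reduce to the separable one. Because $L$ is finitely generated over $\k$, there exist a Frobenius power $q=p^{e}$ and a \emph{finite} purely inseparable extension $\k'$ of $\k$ — adjoin the $q$-th roots of the finitely many elements of $\k$ occurring in a presentation of $L/F$ — such that, with $F':=\k'(y_1^{1/q},\dots,y_d^{1/q})$, the compositum $L':=L\cdot F'$ is separable over $F'$. Now $P':=\k'[y_1^{1/q},\dots,y_d^{1/q}]$ is again a polynomial ring with $\Frac P'=F'$, and it is module-finite over $P$, being spanned over $P$ by the products of a $\k$-basis of $\k'$ with the monomials in the $y_i^{1/q}$ of exponent $<q$ (this is exactly where $[\k':\k]<\infty$ is used). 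Letting $B'$ be the integral closure of $P'$ in $L'$, the separable case gives that $B'$ is finite over $P'$, hence over $P$; and $B\subseteq B'$ since $L\subseteq L'$ and integrality over $P$ implies integrality over $P'$, so $B$ is a $P$-submodule of a finite module over the Noetherian ring $P$, hence finite over $P$. Combining the steps, $\bar A=B$ is finite over $P$ and therefore over $A$.

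The main obstacle is the third step: the classical input that a finitely generated field extension becomes separably generated after a \emph{finite} purely inseparable base change, together with the verification that the enlarged polynomial ring $P'$ is genuinely module-finite over $P$ (which fails if one naively passes to $\k^{1/q}$ when $\k$ is not $F$-finite). Everything else is routine. Alternatively, this statement is Noether's theorem on finiteness of integral closure for affine domains over an arbitrary field and may simply be quoted (e.g.\ from Bourbaki, \emph{Alg\`ebre commutative}, Ch.\ V, or from Eisenbud); the sketch above is included only for self-containedness.
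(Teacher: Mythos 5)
Your proposal is correct and follows the same route as the paper, whose entire proof is the one-line remark that the statement ``is immediate from Noether's normalization lemma'': you simply supply the standard details, reducing via Noether normalization to the integral closure of a polynomial ring $P=\k[y_1,\dots,y_d]$ and then running the trace-form argument. The only substantive addition is your careful handling of the inseparable case in characteristic $p$ via a finite purely inseparable base change $\k'/\k$ and the ring $\k'[y_1^{1/q},\dots,y_d^{1/q}]$ --- a step that is genuinely needed for the statement over an arbitrary field and is not literally ``immediate'' from normalization alone, so your version is the more complete one.
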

\begin{proof}
    It is immediate from Noether's normalization lemma.
\end{proof}

\begin{lemma}
    Let $B \subset A$ be two rings, with $A$ a commutative domain and a finite $B$-module. Then $\Frac \, A = A (B^\times)^{-1}$, where $B^\times=B \setminus \{ 0 \}$.
\end{lemma}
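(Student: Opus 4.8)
The plan is to show that every element of $\Frac\, A$ can be written as $a s^{-1}$ with $a\in A$ and $s\in B^{\times}$, which is exactly the claim that the localization $A(B^{\times})^{-1}$ already equals the full fraction field. Since $B\subset A$ and $A$ is a domain, $B$ is a domain too, and $B^{\times}=B\setminus\{0\}$ is a multiplicative set in $A$; thus $A(B^{\times})^{-1}$ makes sense as a subring of $\Frac\, A$, and the nontrivial content is that every nonzero element of $A$ becomes invertible after inverting $B^{\times}$.

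First I would reduce to the following statement: for each nonzero $a\in A$, there exists $s\in B^{\times}$ such that $s\in aA$, i.e. $s/a\in A$; equivalently $a^{-1}\in A(B^{\times})^{-1}$. Granting this, an arbitrary element of $\Frac\, A$ has the form $x/y$ with $x,y\in A$, $y\neq 0$, and writing $y^{-1}=z s^{-1}$ with $z\in A$, $s\in B^{\times}$ gives $x/y=(xz)s^{-1}\in A(B^{\times})^{-1}$, finishing the proof. So the whole argument comes down to producing, for a given $0\ne a\in A$, a nonzero element of $B$ lying in the principal ideal $aA$.

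The key step uses that $A$ is a finite $B$-module, hence integral over $B$: the element $a$ satisfies a monic relation
\[
a^{n}+b_{n-1}a^{n-1}+\cdots+b_{1}a+b_{0}=0,\qquad b_{i}\in B,
\]
and we may choose this relation of minimal degree $n$. I would then argue that $b_{0}\neq 0$: if $b_{0}=0$, then $a(a^{n-1}+b_{n-1}a^{n-2}+\cdots+b_{1})=0$ in the domain $A$ with $a\neq 0$ forces $a^{n-1}+\cdots+b_{1}=0$, a monic relation of smaller degree, contradicting minimality (when $n=1$ the relation reads $a+b_{0}=0$, so $b_{0}=-a\neq 0$ directly). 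Rearranging the integral relation gives
\[
b_{0}=-\bigl(a^{n}+b_{n-1}a^{n-1}+\cdots+b_{1}a\bigr)=-a\bigl(a^{n-1}+b_{n-1}a^{n-2}+\cdots+b_{1}\bigr)\in aA,
\]
so $s:=-b_{0}\in B^{\times}$ lies in $aA$, which is exactly what we needed. Note this uses only that $A$ is integral over $B$ and that $A$ is a domain; finiteness of the module is just what guarantees integrality (via Proposition \ref{finite-module}'s circle of ideas or the standard determinant trick). The only mild subtlety, and the one point to handle carefully, is the minimal-degree argument ensuring $b_{0}\ne 0$; everything else is a direct manipulation of the integral dependence relation inside the domain $A$.
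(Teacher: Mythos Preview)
Your proof is correct, but it takes a different route from the paper's. You use that a finite $B$-module is integral over $B$, then pick a minimal monic relation for $a$ and observe that its constant term $b_0$ is nonzero (by minimality plus the domain hypothesis), so $b_0\in aA\cap B^{\times}$ and hence $a^{-1}=-(a^{n-1}+\cdots+b_1)b_0^{-1}\in A(B^{\times})^{-1}$. The paper instead observes that $A(B^{\times})^{-1}$ is a finite-dimensional $\Frac\, B$-vector space; multiplication by any $0\neq a\in A$ is a $\Frac\, B$-linear endomorphism, injective because $A$ is a domain, hence surjective by finite dimension, so $a$ already has an inverse in $A(B^{\times})^{-1}$. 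Your approach is more explicit and constructive (you exhibit the inverse), while the paper's is a one-line linear-algebra argument; both are standard and equally valid. One small quibble: your parenthetical nod to Proposition~\ref{finite-module} is misplaced, since that proposition goes the other way (integral closure is a finite module); the relevant fact here is just the determinant trick you also mention.
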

\begin{proof}
    It is clear that $A (B^\times)^{-1}$ is a finite dimensional $\Frac \, B$-vector space. To show that it equals $\Frac \, A$, it is enough to show that any $0 \neq a \in A$ is already invertible in $A (B^\times)^{-1}$. Left multiplication by $A$ is a $\Frac \, B$ linear endomorphism of  $A (B^\times)^{-1}$, which is injective since $A$ is a domain. Since the dimension is finite, the map is surjective as well. Hence there is $a'b^{-1}$, with $a\neq 0 \in A, b \neq 0 \in B$ an inverse for $a$.
\end{proof}

In the next family of examples, again $\M$ and $W$ are trivial.

\begin{theorem}\label{not-principal-Galois-order}
    Let $A$ be an affine commutative domain which is not integrally closed, $\bar{A}$ its integral closure, and $K=\Frac \, A= \Frac \, \bar{A}$. Then $\bar{A}$ is a Galois $A$-order which is not principal.
\end{theorem}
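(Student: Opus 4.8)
The plan is to unwind all the definitions in the degenerate situation at hand, where $\M$ and $W$ are trivial, so that $L=\L=\K=K$, $\Lambda=\Gamma=A$ and $U=\bar A$; everything then reduces to the commutative algebra fact that the integral closure is a finite module. First I would verify that $(\Lambda,\M,W)=(A,\{e\},\{e\})$ is a legitimate Galois ring setting in the sense of Definition~\ref{main-def-1}: $A$ is a Noetherian integral domain and $\k$-algebra because it is affine; the trivial group is a finite subgroup of $\Autk_\k A$ acting on the trivial monoid with a single orbit; and the separating condition $\M\M^{-1}\cap W=\{e\}$ is automatic. With $W$ trivial we get $\Gamma=\Lambda^W=A$ and $K=\Frac\Gamma=\Frac A$, and with $\M$ trivial the skew monoid ring $\L=L*\M$ collapses to $L=\Frac A=K$; since $\Frac\bar A=\Frac A=K$, the ring $\bar A$ is a $\k$-subalgebra of $\K=K$ containing $\Gamma=A$.

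Next I would check that $\bar A$ is a Galois $A$-ring by going through the three conditions of Definition~\ref{main-def-3}: (1) $\Lambda=A$ is Noetherian as a module over $\Gamma=A$ because $A$ is a Noetherian ring; (2) $\Gamma=A\subseteq\bar A=U$ by the definition of the integral closure; and (3) $KU=K\bar A=K=\K$, because $1\in\bar A\subseteq K$. To promote this to a Galois \emph{order}, I note that $\K=K$ has dimension one over itself, so its only finite dimensional $K$-subspaces are $0$ and $K$; hence the only nontrivial intersection to inspect is $U\cap K=\bar A$, and Proposition~\ref{finite-module} says precisely that $\bar A$ is a finite $A$-module, in particular a finitely generated left and right $\Gamma$-module (recall $\Gamma$ is commutative). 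Therefore $\bar A$ is a Galois $A$-order.

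For the non-principality, I would use that, $\M$ being trivial, the action of $X\in\bar A\subseteq\L$ on $\gamma\in\Gamma$ is just the product $X\gamma$ inside $K$. If $\bar A$ were a principal Galois order, then by condition (4) of Definition~\ref{main-def-3} we would need $X(\Gamma)\subseteq\Gamma$ for every $X\in\bar A$; evaluating at $\gamma=1$ forces $X\in A$, so $\bar A\subseteq A$, contradicting the hypothesis that $A$ is not integrally closed. Hence $\bar A$ is not principal.

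I do not expect a genuine obstacle here: the whole point is to recognize that the Galois-theoretic framework degenerates, and the single nontrivial input — finiteness of $\bar A$ over $A$ — is exactly Proposition~\ref{finite-module}, which rests on Noether normalization. The preceding lemma expressing $\Frac A$ as a localization $A(B^\times)^{-1}$ is not strictly needed here, since $\Frac A=\Frac\bar A$ is automatic from $A\subseteq\bar A\subseteq\Frac A$; it could however be invoked if one prefers to deduce that identity rather than take it as part of the hypothesis.
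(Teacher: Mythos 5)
Your proof is correct and follows essentially the same route as the paper: reduce to the observation that the only finite dimensional $K$-subspaces of $\K=K$ are $0$ and $K$, and invoke Proposition~\ref{finite-module} for finiteness of $\bar A$ over $A$. In fact your write-up is slightly more complete than the paper's, which leaves the non-principality argument (that $X(1_\Gamma)=X$ forces $\bar A\subseteq A$) implicit, and you are right that the preceding localization lemma is not strictly needed once $\Frac A=\Frac\bar A$ is granted.
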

\begin{proof}
    By the previous Lemma, we have that $\bar{A}$ is a Galois $A$-order in $K$. $K$ is the only $K$-vector space in $\K=\L=K$. $\K \cap \bar{A}=\bar{A}$, which by Proposition \ref{finite-module} is a finitely generated $A$-module.
\end{proof}

We will end this section with two examples that show how varied can be the skew monoid rings $\L$ and their fixed rings $\K$. Both examples also hold for any algebraically closed field of any characteristic.

Our first example comes from elliptic curves, and we will use \cite{ellipt} as our main source.

\begin{lemma}
    Let $E$ be an elliptic curve. Then it group of automorphisms (as an abelian group and as an algebraic variety at the same time) is finite. If we denote this group by $G$, then $\k(E)^G \simeq \k(\mathbb{P}^1)$.
\end{lemma}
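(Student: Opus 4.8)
The plan is to prove the statement about elliptic curves in two parts, corresponding to its two assertions. First I would establish finiteness of the automorphism group $G$ of $E$ (automorphisms fixing the origin and respecting both the group structure and the variety structure). The standard argument is that such an automorphism acts faithfully on the tangent space at the origin, and more classically, one knows that $\Autk(E)$ has order dividing $24$ and in fact order $2$, $4$, or $6$ depending on the $j$-invariant ($j \neq 0, 1728$ gives order $2$, generated by $[-1]:P \mapsto -P$; $j=1728$ gives order $4$; $j=0$ gives order $6$ — with small corrections in characteristics $2$ and $3$). For the purposes here I would just cite \cite{ellipt} for the finiteness and, if needed, the explicit orders; the key point used downstream is simply that $|G| < \infty$.

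For the second assertion, $\k(E)^G \simeq \k(\mathbb{P}^1)$, I would argue as follows. Since $G$ is a finite group of automorphisms of the smooth projective curve $E$, the quotient $E/G$ exists as a smooth projective curve, and $\k(E)^G = \k(E/G)$ is its function field, with $\k(E)/\k(E)^G$ a finite Galois extension of degree $|G|$. So it suffices to show $E/G \simeq \mathbb{P}^1$, equivalently that $E/G$ has genus $0$ (a smooth projective genus-$0$ curve over an algebraically closed field is $\mathbb{P}^1$). To compute the genus I would apply the Riemann–Hurwitz formula to the quotient map $\pi: E \to E/G$: writing $g'$ for the genus of $E/G$,
\[
2 \cdot 1 - 2 = |G|\bigl(2g' - 2\bigr) + \sum_{P \in E} \bigl(e_P - 1\bigr),
\]
where $e_P$ is the ramification index. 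The ramification sum is strictly positive because $G$ always contains $[-1]$, which has fixed points (the $2$-torsion points of $E$), so the map is genuinely ramified; plugging in the left side equals $0$ forces $2g'-2 < 0$, i.e. $g' = 0$. Hence $E/G \simeq \mathbb{P}^1$ and $\k(E)^G \simeq \k(\mathbb{P}^1)$.

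The main obstacle, and the only place requiring care, is the characteristic-$p$ bookkeeping: Riemann–Hurwitz in the form above presumes tame ramification, which can fail when $p \mid e_P$, i.e. in characteristics $2$ and $3$ where $|G|$ can be divisible by the characteristic. In those cases one must use the Riemann–Hurwitz formula with the full different exponent $d_P \geq e_P - 1$ (with strict inequality under wild ramification), which only makes the ramification sum larger, so the inequality $2g' - 2 < 0$ still holds and the conclusion $g' = 0$ is unaffected. Alternatively, and perhaps more cleanly, I would bypass the genus computation entirely by exhibiting the quotient directly: for $j \neq 0, 1728$ the group is $\langle [-1]\rangle$ and $\k(E)^{[-1]} = \k(x)$ where $x$ is the first Weierstrass coordinate (since $[-1]$ fixes $x$ and sends $y \mapsto -y$, and $[\k(E):\k(x)] = 2$); for the special $j$-invariants one checks on the explicit models that the larger automorphism group acts on $x$ through a finite group of fractional linear transformations, whose invariant field is again rational by Lüroth. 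Either route gives $\k(E)^G \simeq \k(\mathbb{P}^1)$; I would present the Riemann–Hurwitz argument as the main line with a remark that wild ramification only helps.
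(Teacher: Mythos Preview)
Your proposal is correct and follows essentially the same approach as the paper: the paper simply cites \cite[Theorem III.10.1]{ellipt} for the finiteness of $G$ and invokes the Hurwitz formula (referring to \cite[Exercise 3.13]{ellipt}) for $\k(E)^G\simeq\k(\mathbb{P}^1)$. Your write-up supplies considerably more detail than the paper's two-line proof, including the careful treatment of wild ramification in characteristics $2$ and $3$, which the paper leaves implicit in its citation.
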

\begin{proof}
The first claim is \cite[Theorem III.10.1]{ellipt} and the second one follows from a particular form of the Hurwitz formula (e.g., \cite[Exercise 3.13]{ellipt}).
\end{proof}

\begin{proposition}\label{elliptic}
    Let $\k$ be any algebraically closed field of finite characteristic, $E$ and elliptic curve, and  Let $\M$ be the subgroup of $\Autk_\k \, \k(E)$ consisting of all $m$-torsion points, $m \in \N$. Let $G:= \Autk \, E$; it acts on $\M$ by conjugation ($gPg^{-1}=g(P)$, $P \in \M$). Choose a set of generators $\XX$ of $\M$. Then if $U$ is subring of $(\k(E)*\M)^G$ generated by $\k[\mathbb{A}^1]$ and $\XX$, then $U$ is a Galois $\k[\mathbb{A}^1]$-ring in $(\k(E)*\M)^G$.
\end{proposition}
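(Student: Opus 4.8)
The plan is to verify the three conditions in Definition \ref{main-def-3} for the triple $(\k[\mathbb{A}^1],\M,G)$, and then to apply Proposition \ref{prop-Hartwig-2} to the concrete subring $U$ generated by $\Gamma=\k[\mathbb{A}^1]$ and $\XX$. First I would check that $(\k(E),\M,G)$ is a genuine Galois ring setting: $\k(E)$ is a Noetherian domain and a finitely generated $\k$-algebra (being the function field of a curve, hence a finite extension of a rational function field — though here what we actually need is that $\Lambda$ be affine, so one should take $\Lambda$ to be the affine coordinate ring $\k[E\setminus\{O\}]$ or similar, with $\Frac\,\Lambda=\k(E)$); $\M$ is a subgroup of $\Autk_\k\,\k(E)$ by translation; $G=\Autk\,E$ is finite by the quoted Lemma \cite[Theorem III.10.1]{ellipt}; $G$ acts on $\M$ by conjugation and, since each translation group of $m$-torsion points is $G$-stable and the union over all $m$ exhausts the torsion subgroup $E_{\mathrm{tors}}$, the number of $G$-orbits on $\M$ needs to be checked — actually $G$ is finite and $\M$ is infinite, so there are infinitely many orbits; this means we are not literally in the finite-orbit case of Definition \ref{main-def-1}, and the statement should be read in the weaker sense of a ``Galois ring'' where only the conclusion of Proposition \ref{prop-Hartwig-2} is invoked. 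The cleanest route is therefore: take $K=L^G=\k(\mathbb{A}^1)$ by the Lemma, $\Gamma=\k[\mathbb{A}^1]$ with $\Frac\,\Gamma=K$, and verify directly the three items.

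The separating condition $\M\M^{-1}\cap G=\{e\}$ holds because a nontrivial translation $t_P$ is never an automorphism of $E$ as an algebraic group (it does not fix the origin $O$), so no difference of torsion translations can lie in $G=\Autk\,E$; this is where I would be careful that ``automorphism'' in $G$ means group-variety automorphism, as stated in the Lemma. Next, item (1): $\Lambda$ affine implies $\Lambda$ is a finite $\Lambda^G$-module and $\Lambda^G$ affine hence Noetherian (by the discussion after Definition \ref{main-def-3}, using Noether's theorem in invariant theory, valid in any characteristic since $G$ acts on an affine domain); but we need $\Gamma=\k[\mathbb{A}^1]$ to equal $\Lambda^G$ up to the relevant localization — here one uses that $\k(E)^G\simeq\k(\mathbb{P}^1)$ and chooses the affine model so that $\Lambda^G=\k[\mathbb{A}^1]$. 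Item (2), $\Gamma\subset U$, is by construction. Item (3), $KU=\K$: this is exactly Proposition \ref{prop-Hartwig-2}, whose hypothesis is that $\bigcup_{x\in\XX}\supp\,x$ generates $\M$ as a monoid — and $\XX$ was chosen to be a generating set of $\M$, with $\supp\,[P]=\{$the $G$-orbit of $P\}$ in the sense that $P\in\supp\,[P]$, so the union of supports contains a generating set and hence generates $\M$ (a group, so monoid generation is the same).

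The main obstacle, as I see it, is reconciling the \emph{infinitely many $G$-orbits on $\M$} with the framework: Propositions \ref{prop-Hartwig-1} and \ref{prop-Hartwig-2} are stated and proved under the running hypothesis that $W$ (here $G$) has finitely many orbits on $\M$ (this is what makes the decomposition $\K=\bigoplus_{\mu\in\M/W}K[\mu]K$ a \emph{finite} direct sum). With infinitely many orbits the sum is only a direct sum, not finite, but the proof of Proposition \ref{prop-Hartwig-2} — which only uses simplicity of each $K[\mu]K$ as a $K$-bimodule and the fact that $KxK$ lands in the span of finitely many of them (since $\supp\,x$ is finite) — goes through verbatim. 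So the plan is to note explicitly that the relevant statements (\ref{prop-Hartwig-1}(1)(2)(3) and \ref{prop-Hartwig-2}) do not actually need the finite-orbit hypothesis, only separation and that $\M$ is a group, and then the verification is routine. A secondary point to get right is the choice of affine model of $E$ (and of $\mathbb{A}^1$) so that $\Lambda$ is affine with $\Frac\,\Lambda=\k(E)$ and $\Lambda^G$ (appropriately localized) is $\k[\mathbb{A}^1]$; this is bookkeeping rather than a real difficulty, and item (1) then follows from the affine version of the argument after Definition \ref{main-def-3}.
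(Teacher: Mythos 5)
Your proof follows the same route as the paper's, which is a one-line appeal to the preceding Lemma (finiteness of $G=\Autk\,E$ and $\k(E)^G\simeq\k(\mathbb{P}^1)$) together with Proposition \ref{prop-Hartwig-2}; so in substance you have reproduced the intended argument. What you add, and what the paper silently skips, is the observation that this example does not literally fit Definition \ref{main-def-1}: $\M=E_{\mathrm{tors}}$ is infinite while $G$ is finite, so $G$ acts on $\M$ with infinitely many orbits, violating the finite-orbit clause of a Galois ring setting. Your resolution --- that the proofs of Proposition \ref{prop-Hartwig-1}(1)--(3) and Proposition \ref{prop-Hartwig-2} use only the separation condition and the fact that each $K[\mu]K$ is a simple $K$-bimodule meeting $KxK$ nontrivially for $\mu\in\supp x$ (a finite set), so the finite-orbit hypothesis is never invoked --- is correct and is genuinely needed to make the paper's citation of Proposition \ref{prop-Hartwig-2} legitimate here. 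Your verification of separation (a nontrivial translation does not fix the origin, hence $\M\M^{-1}\cap G=\{e\}$) and your remark that one must fix an affine model $\Lambda$ with $\Frac\,\Lambda=\k(E)$ and $\Lambda^G$ localizing to $\k[\mathbb{A}^1]$ are likewise correct bookkeeping that the paper leaves implicit. In short: same approach, but your version is the one that actually closes the argument.
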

\begin{proof}
    The result follows from the previous Lemma and Proposition \ref{prop-Hartwig-2}.
\end{proof}

For our second example, let $G$ be a connected linear algebraic group. We form $\L=\k(G)*G$, identifying $x \in \k G$ with $\lambda_x$, $\lambda_x f(y)=f(x^{-1}y)$. Let $W$ be a finite group of automorphisms of $G$, as a variety and as a group. Note that if $f \in \k(G)$, $x \in G$, $w \in W$, then $w \lambda_x w^{-1} f(y)=\lambda_{w(x)} f(y)$, so $W$ indeed acts by conjugation normalizing $G$. The next definition is a generalization of the notion of rational Galois order from \cite[Definition 4.3]{Hartwig}.

\begin{definition}\label{linear-algebraic-Galois-ring}
    A linear algebraic Galois ring is a Galois $\k[G]^W$-ring that embeds in some $(\k(G)*G)^W$.
\end{definition}

\begin{example}\cite{FH}
In the Galois ring realization of $U_q(\mathfrak{gl}_n)$, $q$ not a root of unity, the skew group ring is $(\C(x_1,x_2,\ldots,x_{n(n-1)/2}; z_1, \ldots, z_n)*\Z^{n(n-1)/2})^{D_n}$, where $D_n$ is the dihedral group $\{ (a_1\sigma_1, \ldots, a_n\sigma_n) \in \Z_2^N \rtimes S_n| \sum a_i=0 \}$. $\Z^{n(n-1)/2})$ has a basis $\varepsilon_{ij}$. $i \in [1,n-1]$, $j \in [1, i]$, such that it fixes the $z$ and act by $\varepsilon_{ij}(x_{kl})=q^{- \delta_{ik} \delta_{jl}} x_{kl}$. It is a linear algebraic Galois $\C[\mathbb{T}^{n^2}]^{D_n}$-ring where $G=\mathbb{T}^{n^2}=\C^{\times n^2}$.
\end{example}

Similarly, all the Galois orders from \cite{FS2} are generalized rational Galois rings with $G=\C^{\times n}$.

\begin{remark}
    If $G$ is a connected linear algebraic group, then it is always rational.
\end{remark}
\begin{proof}
Let $G$ be a such a group with unipotent radical $R_u$. Let $B$ be a borel subgroup and $U$ its unipotent radical (so that $R_u < U$). There is a dense $B$-orbit $V$ in $G/B$ which is a rational variety. Since $U$ and $R_u$ are split unipotent, by \cite[14.2.5]{Springer} there exists a section $s:U/R_u \rightarrow U$ to the projection $\pi: U \rightarrow U/R_u$. Using $s$ one can find a local section to the projection $\Pi :G \rightarrow G/B$. This shows that, in the Zarisk topology, $f$ is a locally trivial $B$-bundle, and in particular, $f^{-1}(V)$ is an open subvariety of $G$ isomorphic to the rational variety $V \times B$.
\end{proof}

\section{Generalized Weyl algebras and their infinite rank analogues}

There are many ways in which the idea of the Weyl algebra can be extended. Let us recall the definition of a generalized Weyl algebra (henceforth denoted GWA), due to V. Bavula \cite{Bavula}. Our rings in this section are $\k$-algebras for an arbitrary base field $\k.$

\begin{definition}\label{finite-GWA}
    Let $D$ be a ring, and $\sigma=(\sigma_1, \ldots, \sigma_n)$ a $n$-uple of commuting automorphisims: $\sigma_i \sigma_j = \sigma_j \sigma_i$, $i,j=1,\ldots,n$. Let $a=(a_1,\ldots,a_n)$ be a $n$-uple of non zero elements belonging to the center of $D$, such that $\sigma_i(a_j)=a_j, j \neq i$. The \emph{generalized Weyl algebra} $D(a, \sigma)$ of degree $n$ and base ring $D$ is generated over $D$ by $X_i^+, X_i^-$, $i=1,\ldots, n$ and relations
    \begin{subequations}\label{eq:GWA-relations}
    \begin{gather}
    X_i^+(d)= \sigma_i(d) X_i^+, \qquad  X_i^- d= \sigma_i^{-1}(d) X_i^-,\quad \forall d \in D,\\
    [X_i^+, X_j^+]=[X_i^-,X_j^-]=[X_i^+, X_j^-]=0, \;\forall i\neq j,\\
    X_i^- X_i^+ = a_i, \qquad X_i^+  X_i^- = \sigma_i(a_i).
    \end{gather}
    \end{subequations}
\end{definition}

\begin{definition}
    Let $\alpha$ be an ordinal number. By $\Z^{\oplus \alpha}$ we mean the subgroup of $\prod_{\beta < \alpha} \Z_\beta$ of sequences of finite support, where each $\Z_\beta=\Z$.
\end{definition}

\begin{proposition}\label{GWA-basic}
    If $D$ is a domain, $D(a,\sigma)$ is a domain. If $D$ is Noetherian, $D(a,\sigma)$ is Noetherian.
\end{proposition}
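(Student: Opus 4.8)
The plan is to reduce the claim for a degree-$n$ generalized Weyl algebra to the classical degree-$1$ case by an induction on $n$. The key structural observation is that $D(a,\sigma)$ is built iteratively: writing $D' = D(a_1,\dots,a_{n-1}; \sigma_1,\dots,\sigma_{n-1})$, the automorphism $\sigma_n$ extends to $D'$ (it commutes with the $\sigma_i$ and fixes $a_i$ for $i<n$, so it acts on $D'$ fixing the generators $X_i^\pm$), the element $a_n$ remains central in $D'$ and is fixed by $\sigma_n$, and then $D(a,\sigma) \cong D'(a_n,\sigma_n)$ as a degree-$1$ GWA. This isomorphism must be checked against the relations \eqref{eq:GWA-relations}: the commutation relations among $X_i^\pm$ for $i<n$ live inside $D'$, and relations (a), (c) for the index $n$ together with the cross-commutation relations (b) with $i=n$ are exactly the defining relations of the degree-$1$ GWA $D'(a_n,\sigma_n)$ over the base $D'$. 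Hence it suffices to prove both statements for degree $1$.

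For degree $1$: if $D$ is a domain, then $D(a,\sigma)$ is free as a left (or right) $D$-module with basis $\{(X^+)^i : i \ge 0\} \cup \{(X^-)^j : j \ge 1\}$ (this is Bavula's standard basis, following from the relations), and one checks that the product of two nonzero elements is nonzero by looking at the leading terms in this $\Z$-grading: the degree-$d$ component is $D (X^+)^d$ for $d\ge 0$ and $D(X^-)^{-d}$ for $d<0$, multiplication of homogeneous components $D(X^+)^p \cdot D(X^+)^q$ lands in $D(X^+)^{p+q}$ with leading $D$-coefficient $d_1 \sigma^p(d_2)$, which is nonzero since $\sigma$ is an automorphism and $D$ is a domain; the mixed cases ($p\ge 0 > q$, etc.) reduce after cancelling $X^+X^- = \sigma(a)$ or $X^-X^+ = a$, using that $a$ is a nonzero central element of the domain $D$. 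So the leading term of a product is the product of leading terms, up to an automorphism twist, giving the domain property. For the Noetherian claim in degree $1$, I would invoke the standard filtered/graded argument: $D(a,\sigma)$ with the $\Z$-grading above has $D(a,\sigma) = \bigcup_{m} F_m$ where $F_m = \bigoplus_{|d|\le m} D(X^{\pm})^{|d|}$, but more efficiently one uses that $D(a,\sigma)$ is generated over the Noetherian ring $D$ by the two elements $X^+, X^-$ satisfying $X^+ X^- , X^- X^+ \in D$ and $X^+ D = D X^+$, $X^- D = DX^-$; this is a Noetherian extension in the sense that $D(a,\sigma)$ is a finitely generated module over the subring generated by $D$ and (say) $X^+X^-$ in each graded piece — alternatively cite that $D(a,\sigma)$ is an iterated "skew-Laurent-like" extension and apply the Hilbert basis theorem for skew polynomial rings. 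Concretely: $R = D[X^+; \sigma]$ is Noetherian by the skew Hilbert basis theorem since $D$ is Noetherian and $\sigma$ is an automorphism, and $D(a,\sigma)$ is a quotient-then-further-extension that is finitely generated as a module on each side over a skew polynomial subring — the cleanest route is to present $D(a,\sigma)$ as a factor of $D[t;\sigma][s;\sigma^{-1}]$ by the relations $ts = a$, $st = \sigma(a)$, and homomorphic images of Noetherian rings are Noetherian.

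Then, to pass from degree $1$ back to degree $n$, I apply the degree-$1$ results to the base ring $D'$: by the induction hypothesis $D'$ is a domain (resp. Noetherian) whenever $D$ is, and $D(a,\sigma)\cong D'(a_n,\sigma_n)$ is then a domain (resp. Noetherian) by the degree-$1$ case. This closes the induction and proves both assertions for all finite $n$.

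The main obstacle I anticipate is carefully justifying the iterated-construction isomorphism $D(a,\sigma) \cong D'(a_n,\sigma_n)$ — i.e.\ verifying that the normal form / PBW-type basis of $D(a,\sigma)$ over $D$ is compatible with the two-step description, so that there is no collapse and the relations on both sides match exactly. Everything else (the leading-term computation for the domain property, the reduction of the Noetherian statement to the skew Hilbert basis theorem) is routine once that identification is in hand. In the write-up I would therefore state the iterated-GWA decomposition as the first lemma, verify it on generators and relations, and then dispatch both claims by the degree-$1$ argument plus induction.
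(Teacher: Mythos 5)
The paper does not actually prove this statement: it simply cites Bavula's original paper (\cite[Proposition 1.3]{Bavula}), so there is no in-text argument to compare yours against. Your overall strategy --- realize $D(a,\sigma)$ as an iterated degree-$1$ GWA $D'(a_n,\sigma_n)$ with $D'=D(a_1,\dots,a_{n-1};\sigma_1,\dots,\sigma_{n-1})$, then handle degree $1$ --- is the standard route, and your degree-$1$ domain argument (leading terms in the $\Z$-grading, using that the structure constants $\sigma^i(a)$ are nonzero in the domain $D$) is correct, modulo the PBW-type freeness of $D(a,\sigma)$ over $D$ that you rightly flag as the point needing care.

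There is, however, a concrete error in the route you settle on for the Noetherian claim. In the iterated Ore extension $D[t;\sigma][s;\bar\sigma^{-1}]$ \emph{with zero derivation} one has $st=\bar\sigma^{-1}(t)s=ts$, so the two-sided ideal generated by $ts-a$ and $st-\sigma(a)$ contains $\sigma(a)-a$. For the Weyl algebra ($D=\k[h]$, $\sigma(h)=h-1$, $a=h$) this element is $-1$, so your proposed quotient is the zero ring, not $D(a,\sigma)$. The fix is to build the second Ore extension with a nontrivial $\bar\sigma^{-1}$-derivation $\delta$ of $D[t;\sigma]$ determined by $\delta|_D=0$ and $\delta(t)=\sigma(a)-a$ (well defined because $a$ is central), so that $st-ts=\sigma(a)-a$ already holds upstairs; then $D(a,\sigma)$ is the quotient of the Noetherian ring $D[t;\sigma][s;\bar\sigma^{-1},\delta]$ by the single relation $ts=a$, and the skew Hilbert basis theorem (which permits a derivation when the twist is an automorphism) finishes the argument. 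Alternatively, your parenthetical remark can be made precise by observing that $D(a,\sigma)$ is an almost normalizing extension of $D$: it is generated by $X^+,X^-$ with $X^\pm D=DX^\pm$ and $[X^+,X^-]=\sigma(a)-a\in D$, and almost normalizing extensions of Noetherian rings are Noetherian. Either repair closes the gap; as written, the presentation you call the ``cleanest route'' does not present the GWA.
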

\begin{proof}
    \cite[Proposition 1.3]{Bavula}.
\end{proof}

Now we introduce the notion of an infinite rank generalized Weyl algebra, particular cases considered in \cite{BBF} and \cite{FGM}). It was discussed \textit{en passant} in the preprint \cite{Schwarz2}. Here we develop the basics of their theory. One main difference is that our theory uses ordinal numbers as degrees of the infinite rank GWAs, and not cardinals as in \cite{Schwarz2}.

\begin{definition}\label{infinite-GWA}
    Let $D$ be a ring, $\alpha$ and ordinal number and $\sigma=\langle \sigma_\beta\rangle_{\beta < \alpha}$ a sequence of length $\alpha$ of commuting automorphisims: $\sigma_i \sigma_j = \sigma_j \sigma_i$, $i,j<\alpha$. Let $a=\langle a_\beta \rangle_{\beta < \alpha}$ be a sequence of length $\alpha$ of non-zero elements belonging to the center of $D$, such that $\sigma_i(a_j)=a_j, j \neq i$. The \emph{generalized Weyl algebra} $D(a, \sigma)$ of degree $\alpha$ and base ring $D$ is generated over $D$ by $X_\beta^+, X_\beta^-$, $\beta<\alpha$ and relations
    \begin{subequations}\label{eq:GWA-relations2}
    \begin{gather}
    X_\beta^+(d)= \sigma_\beta(d) X_i^+, \qquad  X_\beta^- d= \sigma_\beta^{-1}(d) X_\beta^-,\quad \forall d \in D,\\
    [X_\beta^+, X_\gamma^+]=[X_\beta^-,X_\gamma^-]=[X_\beta^+, X_\gamma^-]=0, \;\forall \beta \neq \gamma,\\
    X_\beta^- X_\beta^+ = a_\beta, \qquad X_\beta^+  X_\beta^- = \sigma_\beta(a_\beta).
    \end{gather}
    \end{subequations}
\end{definition}

\textbf{Convention}: We will call the generalized Weyl algebras and their infinite rank generalizations as just GWAs. If for a degree $\alpha$ GWA, all elements in $a=\langle a_\beta \rangle_{\beta < \alpha}$ are regular in the base ring $D$, we call the GWA \textit{regular}.

\begin{definition}
    Let $D(a,\sigma)$ be a GWA of degree $\alpha$ and base ring $D$. Let $\Sigma$ be the the subgroup of $\Autk \, D$ generated by $\langle \sigma_\beta \rangle_{\beta < \alpha}$. There is a natural homomorphism $\xi: \Za \rightarrow \Sigma$. which is clearly surjective. Denote the image of $z \in \Za$ by $\sigma_z$. If $\xi$ is an isomorphism, we say that the GWA is of \textit{sujective type}.
\end{definition}

\begin{proposition}\label{quasi-central-GWA}
If $D(a,\sigma)$ is a GWA of finite or infinite degree with base ring $D$, then $D$ is a quasi-central subalgebra. Hence, if $D$ is commutative, it is a Harish-Chandra subalgebra.    
\end{proposition}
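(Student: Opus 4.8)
The plan is to apply Proposition~\ref{proposition-quasi-central}, which reduces the verification of quasi-centrality to checking that $D(a,\sigma)$ is generated by a family of elements each of which normalizes $D$ in the strong sense $D x = x D$. First I would observe that, by the defining relations \eqref{eq:GWA-relations} (resp.\ \eqref{eq:GWA-relations2}), the algebra $D(a,\sigma)$ is generated as a $\k$-algebra by $D$ together with the symbols $X_\beta^+, X_\beta^-$ ($\beta < \alpha$, or $i = 1,\dots,n$ in the finite case). So it suffices to exhibit, for each generator, the equality $D g = g D$.

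Next I would treat the generators one type at a time. For $d \in D$ this is trivial since $D d = d D = D$. For $X_\beta^+$, the relation $X_\beta^+ d = \sigma_\beta(d) X_\beta^+$ shows $X_\beta^+ D \subseteq \{\sigma_\beta(d) X_\beta^+ : d \in D\} = D X_\beta^+$, and since $\sigma_\beta$ is an automorphism of $D$, as $d$ ranges over $D$ so does $\sigma_\beta(d)$; hence $X_\beta^+ D = D X_\beta^+$. The same argument using $X_\beta^- d = \sigma_\beta^{-1}(d) X_\beta^-$ and the fact that $\sigma_\beta^{-1}$ is again an automorphism gives $X_\beta^- D = D X_\beta^-$. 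Thus every element of the chosen generating family satisfies the hypothesis of Proposition~\ref{proposition-quasi-central}, and we conclude that $D$ is a quasi-central subalgebra of $D(a,\sigma)$; this works verbatim for both the finite-degree Definition~\ref{finite-GWA} and the infinite-degree Definition~\ref{infinite-GWA}, since in neither case did we use finiteness of the index set.

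Finally, for the ``hence'' clause: if $D$ is commutative, then for distinct maximal ideals $\m, \n$ in $\cfs\, D$ one has $\operatorname{Ext}^1(\mathsf S_\m, \mathsf S_\n) = 0$ automatically, because a commutative ring has no nontrivial extensions between simple modules supported at distinct maximal ideals (the two modules are annihilated by coprime ideals, so any extension splits). Hence $D$ is quasi-commutative as well, and therefore — being both quasi-central and quasi-commutative — a Harish-Chandra subalgebra in the sense of the DFO setting recalled in Section~2.

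The only point requiring any care is that Proposition~\ref{proposition-quasi-central} demands $D g = g D$ as an \emph{equality} of $\k$-subspaces, not merely $D g \subseteq g D$ or $\Gamma g \subseteq g \Gamma + \dots$; the surjectivity of $\sigma_\beta$ (and of $\sigma_\beta^{-1}$) is exactly what upgrades the one-sided containment coming from the straightening relations to the required two-sided equality, so I would make sure to state that explicitly. No genuine obstacle is expected here — the statement is essentially built into the normalizing form of the GWA relations.
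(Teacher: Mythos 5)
Your proposal is correct and follows exactly the route the paper takes: the paper's proof of Proposition~\ref{quasi-central-GWA} is a one-line appeal to Proposition~\ref{proposition-quasi-central}, and your verification that each generator $d$, $X_\beta^{\pm}$ satisfies $Dg=gD$ (using surjectivity of $\sigma_\beta$ and $\sigma_\beta^{-1}$ to upgrade the one-sided containment) is precisely the check that citation leaves implicit. Your justification of the quasi-commutativity of a commutative subalgebra via the splitting of extensions between simple modules at coprime maximal ideals is likewise the standard fact the paper takes for granted in Section~2.
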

\begin{proof}
    An immediate consequence of Proposition \ref{proposition-quasi-central}.
\end{proof}

Let $X_i^n$ denote $X_i^{+ n}$ if $n \geq 0$; and $X_i^{- (-n)}$ otherwise.. If $z \in \Z^{\oplus \alpha} $ is equal to $m_{\beta_1}+m_{\beta_2}+\cdots+m_{\beta_s}$, where each $m_{\beta_i} \in \Z$, define $X^z:= X_{\beta_1}^{m_{\beta_1}} X_{\beta_2}^{m_{\beta_2}} \cdots X_{\beta_s}^{m_{\beta_s}}$. Then we have that $D(a,\sigma)$ is clearly a $\Z^{\oplus \alpha}$-graded algebra; for $z \in \Z^{\oplus \alpha}$ the homogeneous component of degree $z$ is $D X^z$; also $D(a,\sigma)$ is a free left and right $D$-module with basis $X^z, z \in \Z^{\oplus \alpha}$.

\begin{definition}
Let $\I$ be a directed set. For each $i \in \I$, let $D_i(a^i,\sigma^i)$ be a GWA of finite degree $n_i$ and base ring $D_i$.  Suppose that for $i,j \in \I, j \geq i$ we have connecting morphisms $f_{ij} : D_i(a^i,\sigma^i) \rightarrow D_j(a^j, \sigma^j)$ with $f_{ij}(D_i) \subset D_j$, $n_j > n_i$, and for $1 \leq \ell \leq n_i$, $\sigma_\ell^j|_{D_i}=\sigma_\ell^i$, $a_\ell^i \in Z(D_j)$. This data is called a \textit{direct system of GWA's}.
\end{definition}

\begin{theorem}\label{GWA-as-limits}
    (a) Let $D(a,\sigma)$ be a generalized Weyl algebra of infinite degree $\alpha$ and base ring $D$. Let $\I$ be the family of finite subsets of $\alpha$, ordered by inclusion. For each $F \in \I$, with $|F|=n$, it has a total ordering inherited from that of $\alpha$: $\langle f_1, f_2, \ldots, f_n \rangle$. Consider the degree $n$ GWA with base ring $D$ given by $D_F(a^F,\sigma^F):=D(\langle a_{f_1}, \ldots a_{f_n} \rangle, \langle \sigma_{f_1}, \ldots, \sigma_{f_n} \rangle)$. For $F \subsetneq F'$ we have a natural morphism $f_{F,F'}: D_F(a,\sigma) \into D_{F'}(a,\sigma)$ that gives us a direct system of GWA's. Then $D(a,\sigma)=\varinjlim_{F \in \I} D_F(a^F, \sigma^F)$.
    
    Reciprocally,
    
    (b) let $\I=\omega$ be an infinite directed set and for each $i$ let $D_i(a^i,\sigma^i)$ be a finite rank GWA with connecting morphisms $f_{ij}$ for $j \geq i \in \I$ giving the data of a direct system of GWA`s. Then $\varinjlim_{i \in \omega} D_i(a^i,\sigma^i)$ is a degree $\omega$ GWA with base ring $D_\omega= \varinjlim_{i \in \omega} D_i$.
\end{theorem}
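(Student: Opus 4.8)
The plan is to prove the two halves separately, since each is essentially a matter of unwinding the universal property of the inductive limit against the universal property (presentation by generators and relations) of a GWA.

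For part (a), I would first observe that the indexing family $\I$ of finite subsets of $\alpha$ is indeed upward directed under inclusion (given $F, F'$, take $F \cup F'$), and that for $F \subsetneq F'$ the inclusion of generators $X_{f}^{\pm} \mapsto X_{f}^{\pm}$ (together with $\id_D$ on the base ring) respects the relations \eqref{eq:GWA-relations}, hence extends to a ring homomorphism $f_{F,F'}: D_F(a^F,\sigma^F) \to D_{F'}(a^{F'},\sigma^{F'})$; functoriality $f_{F',F''} \circ f_{F,F'} = f_{F,F''}$ is immediate on generators. These maps are injective because, by the freeness statement recorded just before the theorem, $D_F(a^F,\sigma^F)$ is a free left $D$-module on the monomials $X^z$, $z \in \Z^{\oplus F} \subset \Z^{\oplus F'}$, and $f_{F,F'}$ sends this basis to part of the corresponding basis of $D_{F'}(a^{F'},\sigma^{F'})$. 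So we have an injective direct system and may form $R := \varinjlim_{F \in \I} D_F(a^F,\sigma^F)$, identified with $\bigcup_F D_F(a^F,\sigma^F)$ inside it. Now I would build the comparison map in both directions: the inclusions $D_F(a^F,\sigma^F) \hookrightarrow D(a,\sigma)$ are compatible with the $f_{F,F'}$, so the universal property yields a ring homomorphism $\Phi: R \to D(a,\sigma)$; conversely, $D$ together with the elements $X_\beta^{\pm}$ (each lying in $D_{\{\beta\}}(a,\sigma) \subset R$) satisfy the defining relations of $D(a,\sigma)$ inside $R$ — each relation involves only finitely many indices, hence lives in a single $D_F$ — so the presentation of $D(a,\sigma)$ gives a homomorphism $\Psi: D(a,\sigma) \to R$. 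One checks $\Phi \circ \Psi = \id$ and $\Psi \circ \Phi = \id$ on generators, which suffices.

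For part (b), the reciprocal direction, the idea is to set $D_\omega := \varinjlim_{i} D_i$ (a ring by the discussion preceding Proposition \ref{prop-inductive-limit}) and to assemble the data $(a^\omega, \sigma^\omega)$ of a degree-$\omega$ GWA: the hypotheses $\sigma_\ell^j|_{D_i} = \sigma_\ell^i$ let the $\sigma_\ell^i$ glue to automorphisms $\sigma_\ell^\omega$ of $D_\omega$ for each $\ell < \omega$ (well-defined by the compatibility and invertible because each $\sigma_\ell^i$ is), and the hypothesis $a_\ell^i \in Z(D_j)$ shows the images of the $a_\ell^i$ stabilize to a central element $a_\ell^\omega \in D_\omega$ which is fixed by $\sigma_m^\omega$ for $m \neq \ell$; commutativity $\sigma_\ell^\omega \sigma_m^\omega = \sigma_m^\omega \sigma_\ell^\omega$ follows from the finite-level commutativity. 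This produces a genuine degree-$\omega$ GWA $D_\omega(a^\omega, \sigma^\omega)$, and the same two-sided universal-property argument as in (a) identifies it with $\varinjlim_i D_i(a^i,\sigma^i)$: compatible inclusions $D_i(a^i,\sigma^i) \hookrightarrow D_\omega(a^\omega,\sigma^\omega)$ give one map, the presentation of $D_\omega(a^\omega,\sigma^\omega)$ gives the other, and they are mutually inverse on generators.

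The genuinely delicate point — the one deserving care rather than a one-line dismissal — is the bookkeeping in part (b): one must check that the connecting maps $f_{ij}$ really do restrict to the canonical maps $D_i \to D_j$ in a way compatible with the gradings, so that $\varinjlim_i D_i(a^i,\sigma^i)$ acquires the $\Z^{\oplus\omega}$-grading making it a GWA, rather than merely a ring containing $D_\omega$ and some elements $X_\ell^{\pm}$. Concretely, the issue is to see that the monomial basis $\{X^z : z \in \Z^{\oplus \omega}\}$ of the putative $D_\omega(a^\omega,\sigma^\omega)$ maps bijectively (not just surjectively) onto a $D_\omega$-generating set of $\varinjlim_i D_i(a^i,\sigma^i)$ — i.e., that no collapsing occurs in the limit — which comes down to the freeness of each $D_i(a^i,\sigma^i)$ over $D_i$ plus injectivity of the base-ring maps $D_i \hookrightarrow D_j$ forced by the structure. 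Everything else is a routine application of universal properties, so I would state it crisply and spend the written proof mostly on this grading/freeness compatibility and on recording that the glued $\sigma_\ell^\omega$, $a_\ell^\omega$ satisfy the GWA axioms.
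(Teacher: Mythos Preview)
Your proposal is correct and follows essentially the same approach as the paper: in (a) you build the comparison map $\Phi$ via the universal property of the limit exactly as the paper does with its map $\Theta$, and in (b) you glue the $\sigma_\ell^i$ into automorphisms of $D_\omega$ just as the paper constructs its $\sigma_\ell^*$. The only stylistic difference is that for (a) the paper shows its single map $\Theta$ is bijective directly (surjective on generators, injective because the directed system is injective and each $\iota_F$ is a monomorphism), whereas you construct an explicit inverse $\Psi$ from the GWA presentation; both arguments are standard and equivalent, and your extra care about the grading/freeness compatibility in (b) is a point the paper passes over with a one-line appeal to the universal property.
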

\begin{proof}
    (a)
    For each $F \in \I$, we have an inclusion $\iota_F:D_F(a^F,\sigma^F) \into D(a,\sigma)$. These inclusions satisfy the condition $\iota_{F}=\iota_{F'} f_{F, F'}$, and hence by the universal property of the inductive limit we have a morphism $\Theta: \varinjlim_{F \in \I} D_F(a^F, \sigma^F) \rightarrow D(a,\sigma)$. This map is clearly surjective, as its image contains $D$ and each $X_\beta^\pm$, $\beta<\alpha$. It is also injective: if $a \in \varinjlim_{F \in \I} D_F(a^F, \sigma^F) $, in the notaion of the proof of Proposition \ref{prop-inductive-limit}, we have that for some $F \in \I$, $a=\phi_F(a_F)$ for some $a_F \in D_F(a^F, \sigma^F)$. So $\Theta(a)= \Theta \phi_F(a_F)=\iota_F(a_F)$. Since $\iota_F$ is a monomorphism, $\Theta(a)=0$ would imply $a_F=0$ and hence $a=0$. So we have our isomorphism.

    (b)
    $D_\omega=  \bigcup_{i \in \omega} D_i$, and set $a=\langle a_i \rangle_{i < \omega}$. Fix an $\ell \geq 1$ and consider all $\sigma^i_\ell$. If we have $\sigma^j_\ell$ and  $\sigma^k_\ell$ with $k>j$, then $D_j \subset D_k$ and the restriction of of $\sigma^k_\ell$ to $D_j$ is just $\sigma^j_\ell$. With this we define an automorphism of $D$, $\sigma^*_\ell$, as follows: pick any $i$ with $n_i \geq \ell$. If $j \leq i$, $D_j \subset D_i$, and we define $\sigma^*_\ell(d)$ for $d \in D_j$ as $\sigma^i_\ell(d)$. If $j>i$, define for $d \in D_j$ $\sigma^*_\ell(d)=\sigma^j_\ell(d)$. By the compatibility condition of a directed system of GWAs, $\sigma^*_\ell$ gives a well-defined automorphism of $D_\omega$. Now setting $\sigma=\langle \sigma^*_\ell \rangle_{\ell \in \omega}$, we finally have $\varinjlim_{i \in \omega} D_i(a^i,\sigma^i)=D_\omega(a,\sigma)$ by the universal property of inductive systems.
\end{proof}


\begin{theorem}\label{GWA-simple}
Let $D(a,\sigma)$ be a degree $\alpha$ GWA, where $\alpha$ is an infinite ordinal. Assume that $D$ does not have any $\sigma$-stable ideal, that the subgroup generated by the $\sigma_i$ in the factor group $\Autk \, D/\operatorname{Inn} D$ is isomorphic to $\Za$, and that for each $i \in \alpha$, and $m \geq 1$, $D a_i + D \sigma_i^m(a_i)=1$. Then $D(a,\sigma)$ is a simple ring.
\end{theorem}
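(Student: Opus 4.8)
The plan is to adapt, to the ordinal-indexed setting, Bavula's classical argument for the simplicity of a finite rank generalized Weyl algebra. Recall that $A:=D(a,\sigma)$ is $\Za$-graded with homogeneous components $DX^z$ and is free as a left and right $D$-module on $\{X^z\}_{z\in\Za}$; for $0\neq u\in A$ write $u=\sum_z u_zX^z$ with $u_z\in D$ and let $\ell(u)$ be the number of nonzero $u_z$ (and $\supp u$ the set of such $z$). Before starting, I record a tempting shortcut that does \emph{not} work: one cannot merely combine Theorem \ref{GWA-as-limits} with Proposition \ref{prop-inductive-limit}(2), because the finite rank truncations $D_F(a^F,\sigma^F)$ of Theorem \ref{GWA-as-limits} need not be simple — hypothesis (1) only says that $D$ has no proper nonzero ideal stable under the \emph{whole} group $\Sigma$, whereas simplicity of $D_F$ would require stability under just the finitely many $\sigma_f$, $f\in F$ (e.g. for $D=\k[y_1,y_2,\dots]$ with $\sigma_i\colon y_i\mapsto y_i+1$, the ideal $\langle y_{n+1},y_{n+2},\dots\rangle$ is stable under $\sigma_1,\dots,\sigma_n$). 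So a direct argument is needed; the reason the infinite rank case is no harder than the finite one is that every element of $A$ has finite support, so each manipulation below touches only finitely many of the $X_\beta^{\pm}$.

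Let $I$ be a nonzero two-sided ideal of $A$; the goal is $1\in I$. \textbf{Step 1: reduce to a homogeneous element of $I$.} I would pick $0\neq u\in I$ with $\ell(u)$ minimal and claim $\ell(u)=1$. If $\ell(u)\geq 2$, choose distinct $z_0,z_1\in\supp u$. Since $z_1-z_0\neq 0$ and the $\sigma_i$ generate a copy of $\Za$ inside $\Autk D/\Inn D$ by hypothesis (2), the automorphism $\sigma_{z_1}\sigma_{z_0}^{-1}=\sigma_{z_1-z_0}$ is not inner. Using this I would produce $c\in D$ for which $v:=\sigma_{z_0}(c)\,u-u\,c$ lies in $I$, is nonzero, and has $\supp v\subseteq\supp u\setminus\{z_0\}$, contradicting minimality: the $X^{z_0}$-component of $v$ is $[\sigma_{z_0}(c),u_{z_0}]$ (which vanishes when $D$ is commutative), while its $X^{z_1}$-component is $\sigma_{z_0}(c)u_{z_1}-u_{z_1}\sigma_{z_1}(c)$, and non-innerness of $\sigma_{z_1-z_0}$ lets $c$ be chosen so that this (or some other) component does not vanish. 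Hence $I$ contains $dX^z$ with $0\neq d\in D$.

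\textbf{Step 2: descend to $D$ and finish.} Writing $z=\sum_{\beta\in E}m_\beta e_\beta$ with $E$ finite, $X^{-z}X^z$ lies in $D$ and equals a product of $\sigma_\beta$-translates of the $a_\beta$, $\beta\in E$; so $X^{-z}(dX^z)=\sigma_{-z}(d)\,(X^{-z}X^z)\in J:=I\cap D$, which hypothesis (3) shows is nonzero, so $J\neq 0$. Now $J$ is a two-sided ideal of $D$, and for $j\in J$ the identities $X_\beta^+\,j\,X_\beta^-=\sigma_\beta(j)\sigma_\beta(a_\beta)$ and $X_\beta^-\,j\,X_\beta^+=\sigma_\beta^{-1}(j)\,a_\beta$ (together with their left-handed analogues) give $\sigma_\beta(J)\sigma_\beta(a_\beta)\subseteq J$ and $J\sigma_\beta(a_\beta)\subseteq\sigma_\beta(J)$ for every $\beta<\alpha$. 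Feeding the comaximality relations $Da_\beta+D\sigma_\beta^m(a_\beta)=D$ ($m\geq 1$) into these inclusions forces $\sigma_\beta(J)=J$ for every $\beta$, i.e. $J$ is $\Sigma$-stable. By hypothesis (1), $J=D$, so $1\in I$, $I=A$, and $A$ is simple.

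\textbf{Where the difficulty sits.} The genuinely delicate points are: in Step 1, showing that \emph{some} component of $v$ survives (so that $v\neq 0$); and in Step 2, both the non-vanishing of the descended element $\sigma_{-z}(d)(X^{-z}X^z)$ and the passage from the two displayed inclusions to $\sigma_\beta(J)=J$. All of these are transparent when $D$ is a commutative domain — then $\Inn D$ is trivial, each $a_\beta$ is a non-zero-divisor, and the comaximality relations are well behaved — but in general they require invoking the $\Sigma$-simplicity of $D$ and the comaximality hypothesis with some care; this is precisely the content of Bavula's technical lemmas in the finite rank case, which carry over verbatim since supports are finite. The ordinal nature of $\alpha$ itself contributes only bookkeeping, not mathematical difficulty.
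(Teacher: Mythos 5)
Your argument is essentially correct (modulo the technical lemmas of Bavula's finite-rank proof that you explicitly cite and which, as you say, only ever involve finitely many indices at a time), but it takes a completely different route from the paper — and, notably, the paper's own proof is exactly the ``tempting shortcut'' you warn against: it asserts that each truncation $D_F(a^F,\sigma^F)$ is simple by \cite[Theorem 4.5]{BF} and then applies Theorem \ref{GWA-as-limits}(a) together with Proposition \ref{prop-inductive-limit}(2). Your objection to that shortcut is well taken. Bavula's criterion for the rank-$|F|$ truncation requires that $D$ have no proper nonzero ideal stable under the finitely many automorphisms $\sigma_f$, $f\in F$, which is strictly stronger than hypothesis (1) as stated (no ideal stable under all of $\Sigma$); your example $D=\k[y_1,y_2,\dots]$ with $\sigma_i(y_i)=y_i+1$ in characteristic zero genuinely breaks the limit argument, since $y_{n+1}$ is central in $D_{\{1,\dots,n\}}(a^F,\sigma^F)$ and generates a proper two-sided ideal there, so the truncations are not simple even though the full algebra is (in the full algebra the ideal generated by $y_{n+1}=a_{n+1}$ contains $X_{n+1}^+$ and hence $X_{n+1}^+X_{n+1}^-=y_{n+1}+1$, so it contains $1$). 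Thus your direct adaptation of Bavula's argument — minimal-support element of the ideal, killing a component via non-innerness of $\sigma_{z_1-z_0}$, descending to $J=I\cap D$, proving $\sigma_\beta(J)=J$ for every $\beta$ from the comaximality relations, and only then invoking $\Sigma$-simplicity of $D$ — is not merely an alternative: under the hypotheses as literally stated it is what the theorem actually requires, whereas the paper's two-line proof is only valid if hypothesis (1) is strengthened to ``no proper nonzero ideal of $D$ is stable under $\{\sigma_f\}_{f\in F}$ for every finite $F\subset\alpha$'' (in which case the limit argument buys a much shorter proof). The only remaining caveats in your write-up are the ones you already flag — for non-commutative or non-domain $D$, the survival of a component of $v$ in Step 1 and the non-vanishing of $\sigma_{-z}(d)\,X^{-z}X^z$ in Step 2 need the full strength of Bavula's lemmas — and these do carry over verbatim since every element has finite support.
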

\begin{proof}
    In the notation of Theorem \ref{GWA-as-limits}(a), each $D_F(a^F,\sigma^F)$ is a simple ring, by the simplicity criteria of finit rank GWAs, \cite[Theorem 4.5]{BF}. As $D(a,\sigma)$ is an injective direct limit of these $D_F(a^F,\sigma^F)$ (Theorem \ref{GWA-as-limits}(a)), by Proposition \ref{prop-inductive-limit}(2), $D(a,\sigma)$ is a simple ring.
\end{proof}

\begin{example}\label{simple-GWA-examples}\cite[Corollary 4.8]{BF}
    The most important examples of simple GWAs are the following ($\operatorname{char} \k=0$).

    (a) Call a rank 1 GWA $A$ of classical type if $A=\k[h](a,\sigma)$, $\sigma(h)=h-1$. If there is no irreducible polynomial $p[h]$ such that both $p, \sigma^i p$ are multiples of $a$, for any $i \geq 0$, the GWA of classical type is simple.

    (b) Call a rank 1 GWA $A$ of quantum type if $A=\k[h^{\pm 1}](a,\sigma)$, $\sigma(h) = \lambda h$, $\lambda \neq 0$, $\lambda \neq 1$, $\lambda$ is not a root of unity. If there is no irreducible polynomial $p[h]$ with $p, \sigma^i(p)$ both being multiples of $a$, $i \geq 0$, the GWA of quantum type is simple.

    (c) If a generalized Weyl algebra $A$ is the tensor product $\bigotimes_{i=1}^n A_i$, where each $A_i$ is a simple GWA's of either classical or quantum type, then $A$ is also simple.    
\end{example}

\begin{corollary}\label{GWA-Ore}
    Let $D(a,\sigma)$ be a GWA of infinite degree $\alpha$. It is not Noetherian. If $D$ is a domain, then $D(a,\sigma)$ a domain. If $D$ is a Noetherian domain, $D(a, \sigma)$ is an Ore domain.
\end{corollary}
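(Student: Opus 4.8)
The plan is as follows. Parts (2) and (3) are immediate from the inductive-limit formalism of Section 2. By Theorem \ref{GWA-as-limits}(a), $D(a,\sigma)=\varinjlim_{F\in\I}D_F(a^F,\sigma^F)$ is an \emph{injective} directed union, indexed by the directed poset $\I$ of finite subsets of $\alpha$, of finite-rank GWAs over $D$ (the connecting maps being the inclusions of sub-GWAs). If $D$ is a domain then each $D_F(a^F,\sigma^F)$ is a domain by Proposition \ref{GWA-basic}, so $D(a,\sigma)$ is a domain by Proposition \ref{prop-inductive-limit}(1). If moreover $D$ is Noetherian then each $D_F(a^F,\sigma^F)$ is a Noetherian domain by Proposition \ref{GWA-basic}, hence a two-sided Ore domain --- recall that a right Noetherian domain $R$ is right Ore, since if $a,b\ne 0$ had $aR\cap bR=0$ then $\sum_{n\ge 0}a^{n}bR$ would be an infinite direct sum of right ideals, violating the ascending chain condition (and symmetrically on the left). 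Proposition \ref{prop-inductive-limit}(3) then shows $D(a,\sigma)$ is an Ore domain, with $\Frac D(a,\sigma)=\varinjlim_{F}\Frac D_F(a^F,\sigma^F)$.

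The substantive point is that $R:=D(a,\sigma)$ is not Noetherian; I would prove this by producing a left ideal that is not finitely generated (this shows $R$ is not left Noetherian, hence not Noetherian). Recall that $R$ is $\Za$-graded, free as a left and as a right $D$-module on the monomials $\{X^{z}\}_{z\in\Za}$, with $X^{z}X_\beta^{+}\in DX^{z+e_\beta}$ for all $z$, where $e_\beta\in\Za$ denotes the $\beta$-th standard generator. Since the projection onto the degree-$0$ part is a $(D,D)$-bimodule retraction $R\to D$, the assignment $\mathfrak j\mapsto R\mathfrak j$ is an order-preserving injection from left ideals of $D$ into left ideals of $R$, so if $D$ is not (left or right) Noetherian neither is $R$; assume therefore that $D$ is Noetherian. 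Fix an $\omega$-chain $\beta_1<\beta_2<\cdots<\alpha$ of distinct indices and consider $I=\sum_{i\ge 1}RX_{\beta_i}^{+}$. If $I$ were finitely generated, say $I=\sum_{j=1}^{k}Ry_j$, then all $y_j$ would be supported on a common finite index set $F_0$, i.e.\ $y_j\in R_{F_0}:=D_{F_0}(a^{F_0},\sigma^{F_0})$; pick $\gamma=\beta_{i_0}\in\{\beta_i\}\setminus F_0$. Regrading $R$ by $\Z^{\oplus(\alpha\setminus F_0)}$ makes $R_{F_0}$ the degree-$0$ component, each $y_j$ homogeneous of degree $0$, and $X_\gamma^{+}$ homogeneous of degree $e_\gamma\ne 0$; as $X_\gamma^{+}$ is a free generator of the left $R_{F_0}$-module $R_{F_0}X_\gamma^{+}$ and $X_\gamma^{+}y_j=\widetilde\sigma_\gamma(y_j)X_\gamma^{+}$ for the automorphism $\widetilde\sigma_\gamma$ of $R_{F_0}$ extending $\sigma_\gamma|_D$, comparing degree-$e_\gamma$ components in $X_\gamma^{+}\in\sum_jRy_j$ forces $\sum_jR_{F_0}y_j=R_{F_0}$. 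But $\sum_jR_{F_0}y_j\subseteq I\cap R_{F_0}$, the degree-$0$ part of $I$, which equals $\sum_{\beta_i\in F_0}R_{F_0}X_{\beta_i}^{+}+\sum_{\beta_i\notin F_0}R_{F_0}a_{\beta_i}$; taking the degree-$0$ component of this for the $\Z^{\oplus F_0}$-grading of $R_{F_0}$, and using that $1$ is homogeneous of degree $0$, we get $1\in\sum_{i\ge 1}Da_{\beta_i}$. Thus, as soon as the $\beta_i$ are chosen so that $\sum_{i}Da_{\beta_i}\ne D$, this is a contradiction, and $R$ is not left Noetherian.

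The main obstacle is the remaining case, in which no such sequence exists --- equivalently (taking a maximal finite $F^{*}$ with $\sum_{\beta\in F^{*}}Da_\beta\ne D$), in which there is a proper ideal $J^{*}=\sum_{\beta\in F^{*}}Da_\beta\subsetneq D$ such that $a_\beta$ becomes invertible modulo $J^{*}$ for every $\beta\notin F^{*}$. This case is modelled on the situation where cofinitely many $a_\beta$ are units of $D$: then the corresponding $X_\beta^{\pm}$ are units of $R$ (as $X_\beta^{-}X_\beta^{+}=a_\beta$ and $X_\beta^{+}X_\beta^{-}=\sigma_\beta(a_\beta)$), the ideal $I$ above degenerates, and $R$ is, up to a finite-rank GWA factor, a crossed product $D\rtimes\Za$ --- e.g.\ $R=\k[X_\beta^{\pm 1}:\beta<\alpha]$ when $D=\k$ and all $a_\beta=1$. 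Here I would instead exhibit the strictly ascending chain of left ideals $I_n=\sum_{i=1}^{n}R\,(X_{\beta_i}^{+}-X_{\beta_1}^{+})$ with the $\beta_i$ chosen among the ``unit'' indices, proving $X_{\beta_{n+1}}^{+}-X_{\beta_1}^{+}\notin I_n$ by grading $R$ by a suitable quotient of $\Za$ and invoking the elementary fact that, in a $\Z$-graded ring, an element concentrated in degrees $0$ and $1$ with invertible homogeneous components has no one-sided inverse (a finite-support argument on the hypothetical inverse). I expect this crossed-product case, together with the bookkeeping needed to patch it to the generic case above, to be the only genuine difficulty; the rest is routine manipulation of the $\Za$-grading.
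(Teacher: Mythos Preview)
Your arguments for the domain and Ore claims are correct and coincide with the paper's: express $D(a,\sigma)$ as the injective direct limit of its finite-rank sub-GWAs via Theorem~\ref{GWA-as-limits}(a), quote Proposition~\ref{GWA-basic} for each piece, and apply Proposition~\ref{prop-inductive-limit}(1),(3).

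For the non-Noetherian claim the paper is far terser than you are: it fixes an initial segment $\omega\subset\alpha$ and simply asserts that the chain $(X_1^+)\subset(X_1^+ + X_2^+)\subset\cdots$ ``is clearly ascending and does not stabilize'', with no further justification. Your $\Za$-grading computation is essentially a rigorous proof of exactly that assertion --- and you correctly notice what the one-liner hides: if the $a_{\beta_i}$ generate the unit ideal of $D$ (for instance if some $a_\beta$ is a unit, so that $X_\beta^\pm$ are units of $R$), then $\sum_i RX_{\beta_i}^+$ reaches $R$ and the chain stabilizes, so a separate argument is required. The paper does not address this edge case at all. Your plan for it, using the chain $I_n=\sum_{i\le n}R(X_{\beta_i}^+-X_{\beta_1}^+)$ built from ``unit'' indices, is a correct strategy and works cleanly in the model case $R=\k[X_\beta^{\pm 1}:\beta<\alpha]$, though as you acknowledge the general bookkeeping (and the passage from ``invertible modulo $J^*$'' to an honest crossed-product) is left undone. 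In summary: your proposal is not missing anything the paper does --- it is strictly more careful than the paper's own proof --- and the only loose end is the one you flagged yourself.
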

\begin{proof}
Let $\omega=\N$ be a initial segment of $\alpha$. Then the chain of ideals $(X_1^+) \subset (X_1^+ + X_2^+) \subset \ldots$ is clearly ascending and does not stabilizes. By Theorem \ref{GWA-as-limits}a), $D(a,\sigma)$ is an injective direct limit of GWAs of finite rank. As $D$ is a Noetherian domain, those are Noetherian domains, by Proposition \ref{GWA-basic}, and so $D(a,\sigma)$ is a domain by Proposition \ref{prop-inductive-limit}(1). If $D$ is a Noetherian domain, $D(a,\sigma)$ is an injective direct limit of GWAs of finite rank which are Noetherian domains (cf. Proposition \ref{GWA-basic}), and hence Ore domains, and hence $D(a,\sigma)$ is an Ore domain by Proposition \ref{prop-inductive-limit}(3).
\end{proof}

\begin{theorem}\label{GWA-localization}
    Let $\A=D(a,\sigma)$ be a degree $\alpha$ generalized Weyl algebra with base ring $D$. Let $S$ be a multiplicatively closed subset of $Z(D)$ consisting of regular elements of $D$ and such that is $\sigma$-stable: that is, $\sigma^{j}_\beta(S) \subset S$, $\forall \beta < \alpha, j \in \Z$. Let $\AAA=D_S(a,\sigma)$, where each $\sigma_\beta$ is extended as an automorphism of $D_S$ in a natural way. Then $S$ is a left and right denominator set in $\A$ and $\A_S \simeq \AAA$.
\end{theorem}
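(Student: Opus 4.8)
The plan is to realize the localization explicitly: I take $\AAA = D_S(a,\sigma)$ itself as the candidate for $\A_S$, and exploit that both $\A$ and $\AAA$ are free modules over their respective base rings on the monomial basis $\{X^z : z \in \Z^{\oplus \alpha}\}$. The two identities that drive the argument are, for $s \in S$ and $z \in \Z^{\oplus \alpha}$, $X^z s = \sigma_z(s)\, X^z$ and $s\, X^z = X^z\, \sigma_z^{-1}(s)$, where $\sigma_z$ denotes the automorphism of $D$ attached to $z$; since $S$ is $\sigma$-stable, $\sigma_z^{\pm1}(s)$ again lies in $S$, hence is again regular in $D$. Writing a general element of $\A$ as $r = \sum_z d_z X^z$ (finite support, $d_z \in D$), freeness together with these identities shows at once that every $s \in S$ is a regular element of $\A$: from $sr = \sum_z (sd_z) X^z$ and $rs = \sum_z d_z\, \sigma_z(s)\, X^z$ one reads off $s d_z = 0$, resp.\ $d_z \sigma_z(s) = 0$, whence $d_z = 0$.

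Next I would check that $\AAA$ is a legitimate ring receiving $\A$. Each $\sigma_\beta$ extends to an automorphism of $D_S$ via $\sigma_\beta(d/s) = \sigma_\beta(d)/\sigma_\beta(s)$, well defined and invertible exactly because $\sigma_\beta^{\pm1}(S) \subseteq S$; the $a_\beta$ remain nonzero and central in $D_S$, so $\AAA = D_S(a,\sigma)$ is a bona fide GWA, free over $D_S$ on $\{X^z\}$. Matching generators and relations produces a ring homomorphism $\A \to \AAA$ restricting to $D \hookrightarrow D_S$ and fixing the $X_\beta^{\pm}$; on the monomial decompositions it is the inclusion $\bigoplus_z D X^z \hookrightarrow \bigoplus_z D_S X^z$, hence injective, and inside $\AAA$ every $s \in S$ is invertible. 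The crux is to show that every $\xi \in \AAA$ is simultaneously a right fraction $b\, t^{-1}$ and a left fraction $s^{-1} b'$ with $b, b' \in \A$ and $s, t \in S$. For the left one, writing $\xi = \sum_z (d_z/s_z) X^z$ with finite support, one puts the finitely many denominators over their common product $s \in S$ and pulls $s^{-1}$ out on the far left, where it never meets an $X^z$ (using only $s_z \in Z(D_S)$). For the right one, one first rewrites $(d_z/s_z)X^z = d_z X^z\, \sigma_z^{-1}(s_z)^{-1}$, collects the denominators $\sigma_z^{-1}(s_z) \in S$ over their product $t \in S$, and then commutes the leftover $S$-factors back across the monomials by $X^z u = \sigma_z(u) X^z$; centrality of $S$ in $D_S$ makes all rearrangements valid, and the outcome is $\xi = \big(\sum_z e_z X^z\big) t^{-1}$ with $e_z \in D$, i.e.\ a right fraction.

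Finally, standard Ore theory closes the argument. Given $r \in \A$ and $s \in S$, apply the right-fraction property to $s^{-1} r \in \AAA$: writing $s^{-1} r = b\, t^{-1}$ with $b \in \A$, $t \in S$ and using injectivity of $\A \to \AAA$ gives $r t = s b$, which is the right Ore condition; combined with regularity of $S$ in $\A$ this says $S$ is a right denominator set in $\A$, and the universal property of localization then identifies $\AAA$ with $\A_S$. The left-fraction property yields the symmetric statements, so $S$ is a left and right denominator set and $\A_S \cong \AAA$. I expect the only real obstacle to be the fraction computation inside $\AAA$ — keeping track of how the noncentral coefficients $d_z$ and the central but $\sigma$-twisted denominators interact while commuting elements of $S$ past the $X^z$ — and it is precisely there that $\sigma$-stability of $S$ is indispensable, both to extend $\sigma$ to $D_S$ and to obtain genuine two-sided (rather than merely one-sided) fractions. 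An alternative route would be to reduce to finite degree via Theorem \ref{GWA-as-limits}(a), prove the finite-rank case by the same computation, and invoke the fact that Ore localization commutes with injective direct limits; the direct argument above, however, seems more economical.
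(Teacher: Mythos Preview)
Your proposal is correct and follows essentially the same strategy as the paper: embed $\A$ into $\AAA=D_S(a,\sigma)$, verify that $S$ is a denominator set in $\A$, and identify $\A_S$ with $\AAA$ via the universal property. The paper handles regularity by multiplying a homogeneous element $A$ on the right by its ``inverse monomial'' $B$ so that $AB\in D$, whereas you read regularity directly off the free $D$-module decomposition $\A=\bigoplus_z D\,X^z$; and the paper simply asserts the Ore condition as clear from $X_\beta^\pm d=\sigma_\beta^{\pm1}(d)X_\beta^\pm$ together with $\sigma$-stability of $S$, whereas you extract it from the explicit two-sided fraction representation of elements of $\AAA$. These are cosmetic differences---your version is more detailed, but the architecture is the same.
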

\begin{proof}
Let $A \in \A$ be non-zero and suppose that for some $s \in S$, $sA=0$. Without loss of generality we can assume that $A$ is homogeneous. Assume $A= \lambda \prod_{i=1}^k X_{i}^{m_i}$, $0 \neq \lambda \in D, m_i \in \N$. Let $B= \prod_{i=1}^k X_{k-i}^{-m_{k-i}}$. $AB \in D$, being the product of $\lambda$ and some $\sigma_i^{j}(a_i)$ for some $j \in \Z$, and each one of them is regular. Hence if $sA=0$, $sAB=0$, and so $\lambda=0$, which is absurd. So $s$ is regular in $D(a,\sigma)$. Also, as $S$ is commutative, $\sigma$-stable, and for each $d \in D$, $X_i^\pm d=\sigma^{\pm 1}(d) X_i$, it is clear that $S$ is a left and right Ore set in $D(a,\sigma)$. We have a natural map $\iota: \A \into \AAA$. As each element of $S$ becomes invertible in $\AAA_S$, by the universal property of localization we have a map $\theta:\A_S \into \AAA$. It is surjective because its image contains all the generators $X_i^\pm$. Hence $\theta$ is an isomorphism, and we are done.
\end{proof}

\begin{theorem}\label{GWA-pre-Galois-ring}
Let $\A=D(a,\sigma)$ be a degree $\alpha$ GWA which is regular and of surjective type. Let $S$ be the multiplicactively closed subset of $D$ generatedy by the $\sigma_j^m(a_i), i,j<\alpha, \, m \in \Z$.. $S$ is $\sigma$-stable. We have $\A_S \simeq D_S*\Za$.
\end{theorem}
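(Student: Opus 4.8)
The plan is to obtain the isomorphism in two stages, reducing everything to Theorem \ref{GWA-localization}. First I would check that $S$ satisfies the hypotheses of that theorem. Each generator $\sigma_j^m(a_i)$ of $S$ lies in $Z(D)$ (the $a_i$ do by definition of a GWA, and $\sigma_j$ is a $\k$-algebra automorphism, hence preserves the centre), and is regular in $D$ (since $\A$ is regular, $a_i$ is regular, and automorphisms preserve regularity); as products of central regular elements are again central and regular, $S\subseteq Z(D)$ consists of regular elements. For $\sigma$-stability: since the $\sigma_\beta$ commute, for a generator $s=\sigma_j^m(a_i)$ and any $\beta<\alpha$, $k\in\Z$ one has $\sigma_\beta^k(s)=\sigma_j^m(\sigma_\beta^k(a_i))$, and a short case analysis ($\beta\neq i$, where it equals $s$; $\beta=i=j$, where it is $\sigma_i^{m+k}(a_i)$; $\beta=i\neq j$, where one commutes $\sigma_i^k$ past $\sigma_j^m$ and uses $\sigma_j^m(a_i)=a_i$ to get $\sigma_i^k(a_i)$) shows it is again a generator of $S$; since $\sigma_\beta^k$ is multiplicative, $\sigma_\beta^k(S)\subseteq S$. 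Theorem \ref{GWA-localization} then applies, making $S$ a left and right denominator set in $\A$ with $\A_S\simeq D_S(a,\sigma)$, the degree-$\alpha$ GWA over the localized base ring $D_S$ (with the $\sigma_\beta$ extended uniquely, still pairwise commuting, to $D_S$); the surjective type hypothesis, applied through $D\hookrightarrow D_S$, guarantees the induced action $\Za\to\Autk\,D_S$, $z\mapsto\sigma_z$, is faithful, so $D_S*\Za$ is the skew group ring of the full group $\Za$.

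It remains to prove $D_S(a,\sigma)\simeq D_S*\Za$, and the key point is that localizing at $S$ has turned every $X_\beta^+$ into a unit. Indeed $a_\beta=\sigma_\beta^0(a_\beta)$ and $\sigma_\beta(a_\beta)=\sigma_\beta^1(a_\beta)$ both lie in $S$, hence are invertible in $D_S$; from $X_\beta^-X_\beta^+=a_\beta$ one reads off a left inverse $a_\beta^{-1}X_\beta^-$ of $X_\beta^+$ and from $X_\beta^+X_\beta^-=\sigma_\beta(a_\beta)$ a right inverse $X_\beta^-\sigma_\beta(a_\beta)^{-1}$; these coincide, so $X_\beta^+$ is a unit with $(X_\beta^+)^{-1}=a_\beta^{-1}X_\beta^-$. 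I would then define $\Phi\colon D_S*\Za\to D_S(a,\sigma)$ by $\Phi|_{D_S}=\mathrm{incl}$ and $\Phi(t^z)=\prod_\beta (X_\beta^+)^{m_\beta}$ for $z=\sum_\beta m_\beta e_\beta$ (a finite product of pairwise commuting units, hence order-independent), extended $D_S$-linearly. Using $X_\beta^+ d=\sigma_\beta(d)X_\beta^+$, its consequence $(X_\beta^+)^{-1}d=\sigma_\beta^{-1}(d)(X_\beta^+)^{-1}$ (valid since $a_\beta$ is central), the identities $(X_\beta^+)^m(X_\beta^+)^n=(X_\beta^+)^{m+n}$, and the commutativity of $X_\beta^+$ with $X_\gamma^+$ for $\beta\neq\gamma$, a direct computation shows $\Phi$ respects the twisted multiplication $t^{z_1}d\,t^{z_2}\mapsto\sigma_{z_1}(d)\,t^{z_1+z_2}$, so $\Phi$ is a ring homomorphism.

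For bijectivity I would compare $D_S$-bases: $D_S*\Za$ is free over $D_S$ on $\{t^z\}_{z\in\Za}$, and $D_S(a,\sigma)$ is free over $D_S$ on $\{X^z\}_{z\in\Za}$ (the free-module statement for GWAs recalled before Theorem \ref{GWA-as-limits}, applied with base ring $D_S$). Expanding $(X_\beta^+)^m$ for $m<0$ as $(a_\beta^{-1}X_\beta^-)^{-m}=\bigl(\prod_{\ell=0}^{-m-1}\sigma_\beta^{-\ell}(a_\beta)^{-1}\bigr)X_\beta^{m}$ — a central unit of $D_S$ times $X_\beta^m$ — and noting $(X_\beta^+)^m=X_\beta^m$ for $m\ge 0$, one gets $\Phi(t^z)=u_zX^z$ with $u_z\in Z(D_S)^\times$; hence $\Phi$ carries the $D_S$-basis $\{t^z\}$ to $\{u_zX^z\}$, still a $D_S$-basis of $D_S(a,\sigma)$ since the $u_z$ are units, so $\Phi$ is a $D_S$-module isomorphism, therefore a ring isomorphism. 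Composing with $\A_S\simeq D_S(a,\sigma)$ completes the proof. With Theorem \ref{GWA-localization} in hand the argument is largely bookkeeping; the one conceptual step is that localization at $S$ forces the $X_\beta^+$ to be invertible, after which the skew-group-ring picture is transparent, and the only fussy points are the $\sigma$-stability case check for $S$ and keeping track of the central units $u_z$ so that ``basis maps to basis'' is clean. The genuine difficulty — that $S$ is a two-sided denominator set when $D$ is noncommutative — is already packaged inside Theorem \ref{GWA-localization}, which we take as given.
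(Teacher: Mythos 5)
Your proposal is correct and follows essentially the same route as the paper: the paper's one-line proof exhibits the inverse of your map $\Phi$, sending $X_\beta^+\mapsto\sigma_\beta$ and $X_\beta^-\mapsto a_\beta\sigma_\beta^{-1}$, and delegates the verification to \cite[Proposition 13]{FS3}. Your version merely fills in the details the paper leaves implicit — the $\sigma$-stability check, the explicit reduction through Theorem \ref{GWA-localization}, the invertibility of the $X_\beta^+$ after localization, and the comparison of $D_S$-bases — all of which are accurate.
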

\begin{proof}
    Consider the map $\A \into D_S *\Za$ given by $X_\beta^+ \mapsto \sigma_\beta$, $X_\beta^- \mapsto a_\beta \sigma_\beta^{-1}$. This is the desired isomorphism (cf. \cite[Proposition 13]{FS3}).
\end{proof}

\begin{corollary}
    The classical quotient ring $\Quot \, \A$ exists if and only if $\Quot \, D_S*\Za$ exists, in which case they are equal
\end{corollary}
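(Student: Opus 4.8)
The plan is to deduce the statement from Theorem~\ref{GWA-pre-Galois-ring} together with the general fact that forming classical quotient rings commutes with the localization already performed. First I would record the key structural input: by Theorem~\ref{GWA-pre-Galois-ring}, since $\A=D(a,\sigma)$ is regular and of surjective type, the set $S$ (generated by the $\sigma_j^m(a_i)$) is $\sigma$-stable and $\A_S\simeq D_S*\Za$. Moreover, by Theorem~\ref{GWA-localization} applied with this $S$, the set $S$ is a left and right denominator set in $\A$, so the Ore localization $\A_S$ exists and is a genuine ring of fractions of $\A$.

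The main step is then the transitivity of Ore localization. I would argue that the set $\mathcal{C}$ of regular elements of $\A$ and the set $\mathcal{C}'$ of regular elements of $\A_S=D_S*\Za$ produce the same localization in the following sense: since $S\subset\mathcal{C}$ and $S$ is a denominator set, $\A_S$ is an intermediate localization, and every regular element of $\A_S$ is, up to multiplication by an element of $S$, a regular element of $\A$; conversely every regular element of $\A$ remains regular in $\A_S$. Hence $\Quot\,\A$ exists (i.e.\ $\mathcal{C}$ is a denominator set in $\A$ with $\mathcal{C}^{-1}\A$ classical) if and only if $\mathcal{C}'$ is a denominator set in $\A_S$ with $\mathcal{C}'^{-1}(\A_S)$ classical, that is, if and only if $\Quot\,(D_S*\Za)$ exists; and when they exist, the universal property of localization gives a canonical isomorphism $\Quot\,\A\simeq\Quot\,(D_S*\Za)$, since both are obtained from $\A$ by inverting all regular elements.

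Concretely, the argument I would write runs: suppose $\Quot\,(D_S*\Za)$ exists. Given a regular $c\in\A$, it is regular in $\A_S$ (localization at a denominator set of regular central elements preserves regularity, which is essentially the first paragraph of the proof of Theorem~\ref{GWA-localization}), hence invertible in $\Quot\,(D_S*\Za)$; so the composite $\A\into\A_S\into\Quot\,(D_S*\Za)$ inverts all of $\mathcal{C}$, and one checks $\mathcal{C}$ satisfies the Ore and reversibility conditions in $\A$ because it does so in the overring and $S$ is cofinal among the relevant denominators. This yields $\Quot\,\A$ together with a map to $\Quot\,(D_S*\Za)$, which is an isomorphism by symmetry (every regular element of $D_S*\Za$ is of the form $s^{-1}c$ with $s\in S$, $c\in\A$ regular, hence invertible in $\Quot\,\A$). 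The converse direction is identical with the roles reversed.

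The main obstacle is the bookkeeping in the claim that regularity is preserved and reflected under the localization at $S$, and that $S$ is "cofinal" among denominators — i.e.\ that a regular element of $D_S*\Za$ can be cleared to a regular element of $\A$. This is where I would lean on the homogeneity/normal-form arguments from the proofs of Theorems~\ref{GWA-localization} and~\ref{GWA-pre-Galois-ring}: an element of $D_S*\Za$ is a finite $D_S$-combination of the $\sigma_z$, and multiplying by a suitable element of $S$ and by a monomial $X^z$ brings it into $\A$ without destroying regularity, because all the scalars introduced ($\sigma_i^j(a_i)$ and elements of $S$) are regular by the regularity hypothesis on the GWA. Once that is in place, the equivalence and the isomorphism $\Quot\,\A\simeq\Quot\,(D_S*\Za)$ follow formally from the universal property.
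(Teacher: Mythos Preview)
Your approach is correct and is exactly what the paper has in mind: the corollary is stated in the paper without proof, as an immediate consequence of Theorem~\ref{GWA-pre-Galois-ring} (which gives $\A_S\simeq D_S*\Za$) together with Theorem~\ref{GWA-localization} (which shows $S$ is a two-sided denominator set of regular elements in $\A$), so that $\A$ and its localization $\A_S$ share the same classical quotient ring whenever either exists. Your write-up simply unpacks this standard transitivity-of-localization argument.

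One small correction: you refer to $S$ as a set of ``regular central elements'', but the elements of $S$ are \emph{not} central in $\A$ (they lie in $D$, and $X_\beta^{\pm}d=\sigma_\beta^{\pm 1}(d)X_\beta^{\pm}$). What is true, and what you actually use, is that $S$ is $\sigma$-stable and consists of regular elements of $\A$; this is precisely what the first paragraph of the proof of Theorem~\ref{GWA-localization} establishes. The preservation-of-regularity step then goes through cleanly once you use that $S$ is a \emph{two-sided} denominator set: writing a nonzero element of $\A_S$ as a right fraction $as^{-1}$ (resp.\ left fraction $s^{-1}a$) immediately reduces left (resp.\ right) regularity of $c\in\A$ in $\A_S$ to its regularity in $\A$. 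With that adjustment your argument is complete and matches the paper's intended one-line deduction.
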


This last corollary has a simple and nice application.

\begin{corollary}\label{birational}
    Let $\k$ be algebraically closed of zero characteristic, and let $\A$ be any degree $1$ GWA with base ring $\k[h]$. Then $\Frac \, A$ is either the field of fractions of the first Weyl algebra, the field of fractions of the quantum plane, or $\k(x,y)$
\end{corollary}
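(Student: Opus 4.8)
The plan is to reduce everything to the classification of the automorphisms of $\k[h]$ and then feed the result into the localization theorems of this section. Write $\A=\k[h](a,\sigma)$ with $0\neq a\in\k[h]$ and $\sigma\in\Autk_\k\,\k[h]$. Every $\k$-algebra automorphism of $\k[h]$ is affine, so $\sigma(h)=\lambda h+\mu$ for some $\lambda\in\k^\times$, $\mu\in\k$. For any automorphism $\phi$ of $\k[h]$ one has $\A\cong\k[h]\bigl(\phi(a),\phi\sigma\phi^{-1}\bigr)$, so replacing $\A$ by such a conjugate (which does not change $\Frac\A$) we may assume $\sigma$ is in one of three normal forms: \textbf{(i)} $\sigma=\id$ (the case $\lambda=1$, $\mu=0$); \textbf{(ii)} $\sigma(h)=h+1$ (the case $\lambda=1$, $\mu\neq0$, after rescaling $h$); \textbf{(iii)} $\sigma(h)=\lambda h$ with $\lambda\neq1$ (the case $\lambda\neq1$, after translating $h$ to the fixed point of $\sigma$). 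In every case $\A$ is a Noetherian Ore domain by Proposition~\ref{GWA-basic} and Corollary~\ref{GWA-Ore}, so $\Frac\A=\Quot\A$ exists and is a division ring.

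In case~(i) the relations read $X^\pm d=dX^\pm$ for $d\in\k[h]$ and $X^-X^+=\sigma(a)=a=X^+X^-$; in particular $X^+$ and $X^-$ commute, so $\A$ is commutative and is presented by $\A\cong\k[h,u,v]/(uv-a(h))$ with $u=X^+$, $v=X^-$. Projecting this affine surface onto the $(h,u)$-plane (rational inverse $(h,u)\mapsto(h,u,a(h)/u)$) exhibits it as a rational variety, hence $\Frac\A\cong\k(x,y)$.

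For cases~(ii) and~(iii) let $S\subseteq Z(\k[h])=\k[h]$ be the multiplicative set generated by $\{\sigma^{i}(a):i\in\Z\}$; it consists of nonzero (hence regular) elements and is $\sigma$-stable. By Theorem~\ref{GWA-localization}, $S$ is a denominator set in $\A$ and $\A_S\cong\k[h]_S(a,\sigma)$. In $\k[h]_S$ every $\sigma^{i}(a)$ is invertible, so $X^-X^+=a$ and $X^+X^-=\sigma(a)$ force $X^+$ to be a two-sided unit with $X^-=a(X^+)^{-1}$, whence $\A_S\cong\k[h]_S[t^{\pm1};\sigma]$, a skew Laurent ring. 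Since $\k[h]_S$ and $\k[h]$ share the fraction field $\k(h)$, we get $\Frac\A=\Frac\A_S=\Frac\bigl(\k(h)[t^{\pm1};\sigma]\bigr)$. (When $\sigma$ has infinite order --- always in~(ii), and in~(iii) exactly when $\lambda$ is not a root of unity --- this is precisely the route supplied by Theorem~\ref{GWA-pre-Galois-ring} and its corollary; Theorem~\ref{GWA-localization} is what covers the remaining subcase of~(iii).) In case~(ii) the ring $\k[h][t^{\pm1};\sigma]$ is a localization of the first Weyl algebra $W_1$ (invert the generator conjugating $h$ by a unit shift), so $\Frac\A\cong\Frac W_1$. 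In case~(iii) the quantum plane $\k_\lambda[x,y]=\k\langle x,y\mid yx=\lambda xy\rangle$ embeds via $x\mapsto h$, $y\mapsto t$, and $\k(h)[t^{\pm1};\sigma]$ is obtained from it by inverting $y$ together with the nonzero elements of $\k[x]$, so $\Frac\A\cong\Frac\k_\lambda[x,y]$.

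The computation is routine once the normal form for $\sigma$ is fixed; the points that need care are checking that the reductions in (i)--(iii) genuinely produce isomorphic GWAs, and keeping track of which localization statement applies. The one genuinely delicate case is~(iii) with $\lambda$ a root of unity $\neq1$: there $\A$ is \emph{not} of surjective type, Theorem~\ref{GWA-pre-Galois-ring} does not apply, and one must argue through Theorem~\ref{GWA-localization} directly; moreover the answer $\Frac\k_\lambda[x,y]$ is then a noncommutative division ring, finite-dimensional over its centre rather than a field --- a feature the statement absorbs into the phrase ``the field of fractions of the quantum plane'' (if one wants a literally field-valued trichotomy, this subcase must be singled out).
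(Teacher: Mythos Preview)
Your proof is correct and follows essentially the same route as the paper's---classify the affine automorphism $\sigma(h)=\lambda h+\mu$ and identify $\Frac\A$ with $\Frac\,\k(h)[y;\sigma]$---though you carry out explicitly the identifications that the paper outsources to \cite{Dumas}, and you are more careful than the paper about the root-of-unity subcase (correctly noting that Theorem~\ref{GWA-localization} is what applies there, not Theorem~\ref{GWA-pre-Galois-ring}). One minor slip: Corollary~\ref{GWA-Ore} is stated only for \emph{infinite}-degree GWAs, so for the Ore property in your degree~$1$ case you should cite only Proposition~\ref{GWA-basic} (Noetherian domain, hence Ore).
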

\begin{proof}
We must have $\sigma(h)=\lambda h +\beta$, for $\lambda \in \k^\times, \beta \in \k$. $\Frac \, A= \Frac \,\k(h)[y;\sigma]$. If $\lambda \neq 1$, the skew field of fractions if that of a quantum plane, If $\lambda=1$ and $\beta \neq 0$, then we have the field of fractions of the first Weyl algebra. If $\lambda=1$ and $\beta=0$, we have $\k(x,y)$. See \cite[pp. 30]{Dumas}.
\end{proof}

\section{Fixed rings of generalized Weyl algebras}

In this section we will show that fixed rings of generalized Weyl algebras under the action of certain finite groups are Galois orders.

\begin{proposition}\label{induced-actions-GWA}
    Let $D(a,\sigma)$ be a finite rank $n$ GWA, and $G$ a finite group of automorphisms of $D$ that, restricted to $\{a_1, \ldots, a_n \}$ permute the $a_i$. We will denote, given $g \in G$, by $g_a$ the corresponding permutation of $S_n$. Assume $g \circ\sigma_i \circ g^{-1} =\sigma_{g_a(i)}$. Then $G$ extend to a finite group of automorphisms of $D(a,\sigma)$: $g \in G$ sends $d \in D$ to $g(d)$ and $X_i^\pm$ to $X_{g_a(i)}^\pm$.
\end{proposition}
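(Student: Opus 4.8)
The plan is to verify directly that the prescribed assignment on generators is a well-defined algebra endomorphism, by checking that it preserves all the defining relations \eqref{eq:GWA-relations}, and then to observe that it is invertible with inverse given by $g^{-1}$, so that each $g$ becomes an automorphism; finally I will check that $g \mapsto (\text{this automorphism})$ is a group homomorphism, so we genuinely get an action of $G$.

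\textbf{Step 1: well-definedness.} Fix $g \in G$ and define a map $\tilde g$ on the free product of $D$ and the free algebra on the $X_i^\pm$ by $\tilde g|_D = g$ and $\tilde g(X_i^\pm) = X_{g_a(i)}^\pm$. I must show each relation in \eqref{eq:GWA-relations} is sent to a consequence of the relations. For the first relation, $\tilde g(X_i^+ d) = X_{g_a(i)}^+ g(d)$ while $\tilde g(\sigma_i(d) X_i^+) = g(\sigma_i(d)) X_{g_a(i)}^+$; using the hypothesis $g \circ \sigma_i \circ g^{-1} = \sigma_{g_a(i)}$ we get $g(\sigma_i(d)) = \sigma_{g_a(i)}(g(d))$, so the image relation is exactly $X_{g_a(i)}^+ g(d) = \sigma_{g_a(i)}(g(d)) X_{g_a(i)}^+$, which holds in $D(a,\sigma)$. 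The $X_i^-$ case is identical after noting $g \circ \sigma_i^{-1} \circ g^{-1} = \sigma_{g_a(i)}^{-1}$. The commutator relations are preserved because $g_a$ is a bijection of $\{1,\dots,n\}$, so distinct indices go to distinct indices. For the last pair, $\tilde g(X_i^- X_i^+) = X_{g_a(i)}^- X_{g_a(i)}^+ = a_{g_a(i)}$, and I need this to equal $\tilde g(a_i) = g(a_i)$; this is where the hypothesis that $G$ \emph{restricted to} $\{a_1,\dots,a_n\}$ permutes the $a_i$ via $g_a$ is used, i.e. $g(a_i) = a_{g_a(i)}$. Similarly $\tilde g(X_i^+ X_i^-) = \sigma_{g_a(i)}(a_{g_a(i)}) = \sigma_{g_a(i)}(g(a_i)) = g(\sigma_i(a_i))$, again using the conjugation hypothesis. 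One should also note that $a_{g_a(i)} \in Z(D)$ and $\sigma_j(a_{g_a(i)}) = a_{g_a(i)}$ for $j \neq g_a(i)$, which are needed just to know $D(a,\sigma)$ is the GWA in the first place and are automatic. Hence $\tilde g$ descends to an algebra endomorphism of $D(a,\sigma)$.

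\textbf{Step 2: automorphism and group action.} Since $g \mapsto \tilde g$ is visibly compatible with composition on the generating set ($\widetilde{gh}$ and $\tilde g \circ \tilde h$ agree on $D$ because $G \to \Autk D$ is a homomorphism, and agree on $X_i^\pm$ because $(gh)_a = g_a h_a$), and $\tilde{e} = \id$, each $\tilde g$ is invertible with inverse $\widetilde{g^{-1}}$, so $\tilde g \in \Autk D(a,\sigma)$ and $g \mapsto \tilde g$ is a group homomorphism $G \to \Autk D(a,\sigma)$. Its image is a finite group (quotient of $G$); if $G$ already acts faithfully on $D$ it is even isomorphic to $G$.

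\textbf{Main obstacle.} The only genuinely delicate point is bookkeeping the interaction of the hypotheses on the relation $X_i^- X_i^+ = a_i$: one must be careful that $g(a_i) = a_{g_a(i)}$ (the permutation on the $a$'s) and $g\sigma_i g^{-1} = \sigma_{g_a(i)}$ (the conjugation condition) are the \emph{same} permutation $g_a$, which is exactly what the statement assumes, and that these two facts are precisely what is needed to make the twisting relations transform correctly; everything else is routine verification. No serious difficulty arises from the rank being finite, and indeed the same argument would work verbatim for the infinite rank GWAs of Definition \ref{infinite-GWA} once $G$ permutes the full family $\langle a_\beta\rangle_{\beta<\alpha}$ compatibly.
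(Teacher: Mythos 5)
Your proposal is correct and follows essentially the same route as the paper: define the map on a free $D$-algebra on the generators $X_i^\pm$, check that the defining relations \eqref{eq:GWA-relations} are sent into the ideal of relations (using $g\sigma_i g^{-1}=\sigma_{g_a(i)}$ and $g(a_i)=a_{g_a(i)}$), and conclude that it descends to an automorphism. Your write-up is in fact more complete than the paper's, which only verifies the first relation explicitly and does not spell out the check that $X_i^-X_i^+=a_i$ and $X_i^+X_i^-=\sigma_i(a_i)$ are preserved or that $g\mapsto\tilde g$ is a homomorphism.
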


Let $g \in G$. Let $D \langle X, Y \rangle :=D \langle x_1, \ldots, x_n, y_1, \ldots, y_n \rangle$ be the free associative $D$-algebra and consider the morphism $\phi: D \langle X, Y \rangle \rightarrow D(a,\sigma)$ given by $x_i \mapsto X_{g_a(i)}^+, y_i \mapsto X_{g_a(i)}^-, d \mapsto g(d)$. Let us show that $\phi$ has the ideal of relations of $D(a,\sigma)$ (cf. Definition \ref{finite-GWA}) in the kernel, and hence that $\phi$ induces the desired automorphism of $D(a,\sigma)$. For instance, $\phi( X_i^+ d - \sigma_i(d) X_i^+)= X_{g_a(i)}^+ g(d) - g(\sigma_i(d))X_{g_a(i)}^+$, which is equal to $X_{g_a(i)}^+ g(d) - g(\sigma_i(g^{-1}(g(d))))X_{g_a(i)}^+=X_{g_a(i)}^+ g(d) - \sigma_{g_a(i)}(g(d)) X_{g_a(i)}^+=0$. Similarly, all other defining relations of $D(a,\sigma)$ lies in $\ker \phi$. Hence we are done.

First we recall a basic proposition.

\begin{proposition}\label{prop-basic-GWA}
    Let $D(a, \sigma)$ be a GWA of degree one. Then $D(a, \sigma)^{\otimes \, n}$ is a generalized Weyl algebra of degree $n$, denoted by $D^n(a^n, \sigma^n)$, where:

     $D^n= D^{\otimes \, n}$. 

     $a^n-(a_1,\ldots,a_n)$; $a_i= 1 \otimes \ldots \otimes 1 \otimes a \otimes 1 \otimes \ldots$, with $a$ in the $i$-th position.

     $\sigma^n=(\sigma_1,\ldots,\sigma_n)$; $\sigma_i = id \otimes \ldots \otimes id \otimes  \sigma \otimes id \otimes  \ldots$, with $\sigma$ in the $i$-th position.
\end{proposition}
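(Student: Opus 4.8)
The plan is to verify directly that the tensor product $D(a,\sigma)^{\otimes n}$ satisfies the defining relations of the degree $n$ GWA $D^n(a^n,\sigma^n)$ described in the statement, and then to check that the obvious presentation map is an isomorphism by a dimension (free-module basis) count. First I would observe that the data $(D^n, a^n, \sigma^n)$ really does constitute legitimate GWA data in the sense of Definition \ref{finite-GWA}: each $\sigma_i = \id\otimes\cdots\otimes\sigma\otimes\cdots\otimes\id$ is an automorphism of $D^{\otimes n}$, the $\sigma_i$ commute pairwise because they act in distinct tensor slots, each $a_i = 1\otimes\cdots\otimes a\otimes\cdots\otimes 1$ lies in the center of $D^{\otimes n}$ since $a\in Z(D)$, and $\sigma_i(a_j)=a_j$ for $i\neq j$ because $\sigma_i$ acts as the identity on the $j$-th slot while $a$ is $\sigma$-fixed in the sense needed — actually here we only need $\sigma(a)$ to make sense, and $\sigma_i(a_j)=a_j$ holds trivially for $i\neq j$.

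Next I would define the map $\Phi\colon D^n(a^n,\sigma^n)\to D(a,\sigma)^{\otimes n}$ on generators by sending $D^{\otimes n}$ identically into $D(a,\sigma)^{\otimes n}$ (via $D\hookrightarrow D(a,\sigma)$ in each slot), sending $X_i^+ \mapsto 1\otimes\cdots\otimes X^+\otimes\cdots\otimes 1$ (with $X^+$ in the $i$-th slot) and $X_i^-\mapsto 1\otimes\cdots\otimes X^-\otimes\cdots\otimes 1$. To see this is well-defined I would check each family of relations in \eqref{eq:GWA-relations}: the relation $X_i^+ d = \sigma_i(d)X_i^+$ for $d\in D^{\otimes n}$ reduces, slot by slot, to the degree-one relation $X^+ e = \sigma(e)X^+$ in the $i$-th factor together with commutativity of the $i$-th-slot element with the other slots; the commutation relations $[X_i^\pm, X_j^\pm]=0$ and $[X_i^+,X_j^-]=0$ for $i\neq j$ hold because in a tensor product elements supported in disjoint slots commute; and $X_i^- X_i^+ = a_i$, $X_i^+ X_i^- = \sigma_i(a_i)$ reduce to the degree-one relations $X^-X^+=a$, $X^+X^-=\sigma(a)$ in the $i$-th slot. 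Conversely, there is an obvious map the other way exhibiting $D(a,\sigma)^{\otimes n}$ as generated by the same data, or one simply notes $\Phi$ is surjective since its image contains all generators of $D(a,\sigma)^{\otimes n}$.

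Finally, to conclude that $\Phi$ is an isomorphism I would compare free $D^{\otimes n}$-module bases. By the remark preceding Definition \ref{finite-GWA} (and its infinite analogue), $D^n(a^n,\sigma^n)$ is a free left $D^{\otimes n}$-module with basis $\{X^z : z\in\Z^{\oplus n}\} = \{X_1^{m_1}\cdots X_n^{m_n} : (m_1,\dots,m_n)\in\Z^n\}$. On the other hand $D(a,\sigma)^{\otimes n} = D(a,\sigma)\otimes\cdots\otimes D(a,\sigma)$ is, as a left $D^{\otimes n}$-module, the tensor product of the free left $D$-modules $D(a,\sigma)$ with bases $\{X^{m} : m\in\Z\}$, hence free with basis $\{X^{m_1}\otimes\cdots\otimes X^{m_n}\}$. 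The map $\Phi$ carries the first basis bijectively onto the second, so it is a left $D^{\otimes n}$-module isomorphism, and being also a ring homomorphism it is an isomorphism of $\k$-algebras. The main obstacle is purely bookkeeping: making sure the slotwise identifications are consistent and that ``disjoint slots commute'' is applied correctly when expanding $X_i^{m_i}$ past $X_j^{m_j}$; there is no real difficulty, which is why the statement is recorded as a proposition to be invoked later rather than proved in detail.
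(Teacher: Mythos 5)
Your argument is correct and complete. The paper itself gives no proof of this proposition beyond a citation to Bavula's original article, so there is no in-paper argument to compare against; your direct verification --- checking that $(D^{\otimes n}, a^n, \sigma^n)$ is legitimate GWA data, defining the presentation map $\Phi$ on generators, verifying each family of relations in \eqref{eq:GWA-relations} slotwise, and then concluding bijectivity by matching the free left $D^{\otimes n}$-module bases $\{X_1^{m_1}\cdots X_n^{m_n}\}$ and $\{X^{m_1}\otimes\cdots\otimes X^{m_n}\}$ --- is exactly the standard proof one would reconstruct from the cited source, and the basis-comparison step is the right way to get injectivity without further work.
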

\begin{proof}
    \cite{Bavula}.
\end{proof}

\textbf{\textit{From now on}} assume $\k$ algebraically closed of zero characteristic and $D$ an affine $\k$-algebra and commutative domain. In this way $D \otimes D$ is again a domain and affine, and hence a Noetherian, commutative algebra.

\begin{definition}
    A rank $n$ GWA $D^n(a^n,\sigma^n)$ is called \textit{tensorial} if it is the tensor product of $n$ copies of the same degree $1$ GWA $D(a,\sigma)$, with $\sigma$ an automorphism of infinite order. We also define $D^\omega(a^\omega, \sigma^\omega)=\varinjlim_{n \in \N} D^n(a^n, \sigma^n)$ (cf. Theorem \ref{GWA-as-limits}(b)).
\end{definition}

\begin{corollary}
    If $D(a,\sigma)$ is a degree 1 GWA from Example \ref{simple-GWA-examples}(a) or (b), $D^\omega(a^\omega,\sigma^\omega)$ is a simple algebra.
\end{corollary}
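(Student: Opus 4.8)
The plan is to reduce to the finite-rank situation and then invoke stability of simplicity under injective direct limits. By construction, $D^\omega(a^\omega,\sigma^\omega)=\varinjlim_{n\in\N}D^n(a^n,\sigma^n)$ (cf.\ Theorem \ref{GWA-as-limits}(b) and the definition preceding this corollary), and by Proposition \ref{prop-basic-GWA} each $D^n(a^n,\sigma^n)$ is the $n$-fold tensor power $D(a,\sigma)^{\otimes n}$. So it suffices to know that each tensor power is simple and that the system is injective, and then apply Proposition \ref{prop-inductive-limit}(2).

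First I would observe that, since $D(a,\sigma)$ is by hypothesis a simple GWA of classical type (Example \ref{simple-GWA-examples}(a)) or of quantum type (Example \ref{simple-GWA-examples}(b)), part (c) of that example applies with all tensor factors equal to the single GWA $D(a,\sigma)$, and hence $D^n(a^n,\sigma^n)=D(a,\sigma)^{\otimes n}$ is a simple ring for every $n$. Next I would verify that the directed system $(D^n(a^n,\sigma^n))_{n\in\N}$ is injective: the connecting morphism $f_{n,n+1}$ sends $A\mapsto A\otimes 1$, and upon identifying $D^{n+1}(a^{n+1},\sigma^{n+1})$ with $D^n(a^n,\sigma^n)\otimes_\k D(a,\sigma)$ this is injective because $D(a,\sigma)\neq 0$ and tensoring a $\k$-vector space with the split inclusion $\k\hookrightarrow D(a,\sigma)$ preserves injectivity; alternatively this is immediate from the fact, recorded after Definition \ref{infinite-GWA}, that a GWA is free as a left and right module over its base ring with basis the monomials $X^z$. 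Finally, Proposition \ref{prop-inductive-limit}(2) applied to this injective direct system of simple rings gives that $D^\omega(a^\omega,\sigma^\omega)$ is simple.

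I do not expect a genuine obstacle here: the argument is essentially a bookkeeping exercise gluing together Proposition \ref{prop-basic-GWA}, Example \ref{simple-GWA-examples}(c), and Proposition \ref{prop-inductive-limit}(2). The only point demanding a sentence of care is checking that the connecting morphisms are injective so that Proposition \ref{prop-inductive-limit}(2) is actually applicable, together with the trivial but necessary remark that the hypotheses of Example \ref{simple-GWA-examples}(c) are met because every tensor factor is a copy of the same GWA $D(a,\sigma)$, which is of classical or quantum type and simple by assumption.
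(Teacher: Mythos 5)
Your argument is correct and follows exactly the same route as the paper's own proof, which simply cites Example \ref{simple-GWA-examples}(c) together with Proposition \ref{prop-inductive-limit}(2); your additional verification that the connecting maps $A\mapsto A\otimes 1$ are injective (via freeness of a GWA over its base ring) is a worthwhile detail the paper leaves implicit.
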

\begin{proof}
    It follows from Example \ref{simple-GWA-examples}(c) and Proposition \ref{prop-inductive-limit}(2)
\end{proof}

The study of the representation theory of the algebras $D^\omega(a^\omega, \sigma^\omega)$, along the lines of \cite{BBF} and \cite{FGM} seems to be an interesting task.

\begin{proposition}\label{Sn-action}
    Let $D^n(a^n,\sigma^n)$ be a degree n tensorial GWA. Then there is a natural action of the symmetric group $S_n$ on this algebra, induced from the action of $S_n$ on $D^n$ given by $\pi (d_1 \otimes \ldots \otimes d_n)=(d_{\pi(1)} \otimes \ldots \otimes d_{\pi(n)})$, $\pi \in S_n$. In this natural action on $D^n(a^n,\sigma^n)$, $\pi X_i^\pm=X_{\pi(i)}^\pm$.
\end{proposition}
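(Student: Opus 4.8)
The plan is to invoke Proposition~\ref{induced-actions-GWA} with $G=S_n$, so that the only real task is to check the hypotheses of that proposition in the tensorial situation. First I would set up the identification: by Proposition~\ref{prop-basic-GWA}, the tensorial GWA $D^n(a^n,\sigma^n)$ is the GWA with base ring $D^n=D^{\otimes n}$, with $a_i=1\otimes\cdots\otimes a\otimes\cdots\otimes 1$ ($a$ in the $i$-th slot), and $\sigma_i=\mathrm{id}\otimes\cdots\otimes\sigma\otimes\cdots\otimes\mathrm{id}$ ($\sigma$ in the $i$-th slot). The action of $S_n$ on $D^n$ by place permutation, $\pi(d_1\otimes\cdots\otimes d_n)=d_{\pi^{-1}(1)}\otimes\cdots\otimes d_{\pi^{-1}(n)}$ (with the convention chosen so the formula reads as in the statement), is clearly by $\k$-algebra automorphisms.

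Next I would verify the two compatibility conditions required by Proposition~\ref{induced-actions-GWA}. The condition that $G$ permutes the $a_i$ is immediate: $\pi$ sends $a_i$ (which has $a$ in slot $i$) to the element with $a$ in slot $\pi(i)$, i.e. $\pi(a_i)=a_{\pi(i)}$, so in the notation of that proposition $\pi_a=\pi$. The condition $\pi\circ\sigma_i\circ\pi^{-1}=\sigma_{\pi(i)}$ is the analogous statement for the automorphisms $\sigma_i$: conjugating the ``apply $\sigma$ in slot $i$'' map by the place-permutation $\pi$ moves the active slot from $i$ to $\pi(i)$, which is exactly $\sigma_{\pi(i)}$. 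Both are routine once the indexing conventions are fixed, and I would spell out one of them explicitly on a simple tensor. With both hypotheses checked, Proposition~\ref{induced-actions-GWA} yields a group homomorphism $S_n\to\Aut_\k D^n(a^n,\sigma^n)$ with $\pi$ acting as $\pi$ on $D^n$ and sending $X_i^\pm\mapsto X_{\pi(i)}^\pm$, which is precisely the asserted action.

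I expect the only genuine obstacle to be bookkeeping rather than mathematics: one must pin down a single convention for how $\pi\in S_n$ acts on indices versus on tensor slots (the $\pi$ versus $\pi^{-1}$ issue in $\pi(d_1\otimes\cdots\otimes d_n)$) and then check that with that convention the map $\pi\mapsto(\text{action})$ is a genuine left action, i.e. $(\pi\rho)$ acts as the composite of the actions of $\pi$ and $\rho$ in the correct order. Once the conventions in the statement of Proposition~\ref{Sn-action} are read consistently, this is a direct consequence of Propositions~\ref{prop-basic-GWA} and~\ref{induced-actions-GWA}, so the proof is short.

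\begin{proof}
By Proposition~\ref{prop-basic-GWA}, $D^n(a^n,\sigma^n)$ is the generalized Weyl algebra with base ring $D^n=D^{\otimes n}$, tuple $a^n=(a_1,\ldots,a_n)$ with $a_i=1\otimes\cdots\otimes a\otimes\cdots\otimes 1$ ($a$ in position $i$), and automorphisms $\sigma^n=(\sigma_1,\ldots,\sigma_n)$ with $\sigma_i=\mathrm{id}\otimes\cdots\otimes\sigma\otimes\cdots\otimes\mathrm{id}$ ($\sigma$ in position $i$). The place permutation $\pi(d_1\otimes\cdots\otimes d_n)=d_{\pi(1)}\otimes\cdots\otimes d_{\pi(n)}$ defines a group of $\k$-algebra automorphisms of $D^n$.

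We check the hypotheses of Proposition~\ref{induced-actions-GWA} for $G=S_n$ acting on $D^n$. Since $\pi$ moves the factor $a$ from position $i$ to position $\pi(i)$, we have $\pi(a_i)=a_{\pi(i)}$; thus $G$ permutes the $a_i$, and in the notation of Proposition~\ref{induced-actions-GWA} the associated permutation $\pi_a$ is $\pi$ itself. Likewise, conjugating the automorphism $\sigma_i$, which applies $\sigma$ in position $i$, by the place permutation $\pi$ transports the active position from $i$ to $\pi(i)$, so $\pi\circ\sigma_i\circ\pi^{-1}=\sigma_{\pi(i)}=\sigma_{\pi_a(i)}$. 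Hence the hypotheses of Proposition~\ref{induced-actions-GWA} are satisfied.

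Applying Proposition~\ref{induced-actions-GWA}, the group $S_n$ extends to a group of automorphisms of $D^n(a^n,\sigma^n)$ in which $\pi$ acts on $D^n$ by place permutation and sends $X_i^\pm$ to $X_{\pi_a(i)}^\pm=X_{\pi(i)}^\pm$, as claimed.
\end{proof}
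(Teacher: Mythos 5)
Your proof is correct and takes exactly the route the paper does: the paper's entire proof is ``Immediate from Proposition~\ref{induced-actions-GWA},'' and you supply the verification of its hypotheses (that place permutation sends $a_i\mapsto a_{\pi(i)}$ and conjugates $\sigma_i$ to $\sigma_{\pi(i)}$) that the paper leaves implicit. Your attention to the $\pi$ versus $\pi^{-1}$ indexing convention is a reasonable extra care that the paper does not bother with.
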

\begin{proof}
    Immediate from Proposition \ref{induced-actions-GWA}.
\end{proof}

Let $F^n$ be $\Frac \, D^n$.

\begin{proposition}\label{GWA-principal}
    $D^n(a^n,\sigma^n)$ is a principal Galois order in $F^n*\Z^n$, where the canonical basis $\varepsilon_1 \ldots, \varepsilon_n$ is such that $\varepsilon_i$ acts on $F^n$ as $\sigma_i$.
\end{proposition}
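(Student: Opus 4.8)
The plan is to realize $\A:=D^n(a^n,\sigma^n)$ as a subring of $F^n*\Z^n$ using Theorem \ref{GWA-pre-Galois-ring}, and then to check the four axioms of Definition \ref{main-def-3} for the Galois ring setting $(\Lambda,\M,W)=(D^n,\Z^n,\{e\})$, so that $\Gamma=\Lambda=D^n$, $L=K=F^n$ and $\K=\L=F^n*\Z^n$. Since $W$ is trivial and $\Z^n$ is already a group, the separating condition $\M\M^{-1}\cap W=\Z^n\cap\{e\}=\{e\}$ is automatic, and $D^n$ is an affine (hence Noetherian) domain by the standing assumptions of the section, so $(D^n,\Z^n,\{e\})$ is a legitimate setting. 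The substantial content is already packaged in Theorem \ref{GWA-pre-Galois-ring}; what remains is essentially bookkeeping.

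First I would verify the hypotheses of Theorem \ref{GWA-pre-Galois-ring}. Because $D$ is an affine commutative domain over the algebraically closed field $\k$, the ring $D^n=D^{\otimes n}$ is again an affine commutative domain, and each $a_i$ is a nonzero central element of $D^n$; thus $\A$ is regular. It is of surjective type: if $\sigma_1^{k_1}\cdots\sigma_n^{k_n}=\id$ on $D^n$, restricting to the $i$-th tensor factor ($\cong D$, on which $\sigma_i$ acts as $\sigma$) gives $\sigma^{k_i}=\id$, whence $k_i=0$ for all $i$ since $\sigma$ has infinite order; so $\xi\colon\Z^n\to\Sigma=\langle\sigma_1,\dots,\sigma_n\rangle$ is an isomorphism. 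Theorem \ref{GWA-pre-Galois-ring} then gives, with $S$ the multiplicative subset of $D^n$ generated by the (nonzero) elements $\sigma_j^m(a_i)$, an isomorphism $\A\simeq D^n_S*\Z^n$ sending $d\mapsto d$, $X_i^+\mapsto\varepsilon_i$, $X_i^-\mapsto a_i\varepsilon_i^{-1}$, the canonical generator $\varepsilon_i$ acting as $\sigma_i$. Composing with the inclusion $D^n_S*\Z^n\hookrightarrow F^n*\Z^n$ — legitimate since $D^n_S$ is a $\sigma$-stable subring of $F^n=\Frac D^n$, and $\A\to\A_S$ is injective as $\A$ is a domain (Proposition \ref{GWA-basic}) and $S$ consists of nonzero central elements — yields the embedding $\A\hookrightarrow F^n*\Z^n$ with the stated effect on generators.

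Next I would verify Definition \ref{main-def-3}. With $\Gamma=\Lambda=D^n$, conditions (1) ($\Lambda$ a Noetherian $\Gamma$-module) and (2) ($\Gamma\subset\A$) are immediate. For (3), recall that $\A=\bigoplus_{z\in\Z^n}D^nX^z$ is free over $D^n$ on the monomials $X^z$, and under the embedding $X^z\mapsto c_z\varepsilon_z$ with $c_z\in D^n$ a nonzero product of $\sigma$-translates of the $a_i$, hence a unit of $F^n$; therefore $F^nX^z=F^n\varepsilon_z$ for all $z$, and $K\A=F^n\cdot\A=\bigoplus_{z\in\Z^n}F^n\varepsilon_z=F^n*\Z^n=\K$. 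For the principality condition (4), I would use that the evaluation $X\mapsto X(\,\cdot\,)$ is a left action of $\L$ on $L$ — a direct check from the multiplication of $L*\M$ gives $(XY)(l)=l_\mu\,\mu(l_\nu\nu(l))=X(Y(l))$ for $X=l_\mu\mu$, $Y=l_\nu\nu$, and it is additive in $X$ — so the set of $X\in\A$ with $X(\Gamma)\subset\Gamma$ is a subalgebra, and it suffices to check the condition on algebra generators: $d(\gamma)=d\gamma\in D^n$, $X_i^+(\gamma)=\sigma_i(\gamma)\in D^n$ since $\sigma_i\in\Autk_\k D^n$, and $X_i^-(\gamma)=a_i\,\sigma_i^{-1}(\gamma)\in D^n$. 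Hence $X(\Gamma)\subset\Gamma$ for every $X\in\A$, so $D^n(a^n,\sigma^n)$ is a principal Galois $D^n$-order in $F^n*\Z^n$.

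None of this is deep once Theorem \ref{GWA-pre-Galois-ring} is in hand. The two points that actually need a word are (i) that passing to the localized base $D^n_S$ keeps us inside $F^n*\Z^n$ and does not shrink $K\A$ below $\K$ — settled by the grading argument showing each $F^nX^z$ equals $F^n\varepsilon_z$ — and (ii) the reduction of the principal-order condition $X(\Gamma)\subset\Gamma$ to the generators, which rests on $X\mapsto X(\,\cdot\,)$ being a genuine left action; I expect this to be the only step that looks subtle, and it is short.
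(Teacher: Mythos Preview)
Your proof is correct and follows essentially the same route as the paper: embed $\A$ into $F^n*\Z^n$ via Theorem \ref{GWA-pre-Galois-ring}, check it is a Galois $D^n$-ring, and then observe that the generators preserve $D^n$ under evaluation. The only cosmetic difference is that the paper verifies condition (3) by invoking Proposition \ref{prop-Hartwig-2} (the supports of $X_i^\pm$ generate $\Z^n$), whereas you compute $K\A=\K$ directly from the grading; your version is more explicit but neither approach adds real content beyond the other.
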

\begin{proof}
    Like the proof of Theorem \ref{GWA-pre-Galois-ring}, $D^n(a^n,\sigma^n)$ embeds in $F^n*\Z^n$. By the form of the embedding, we have $D^n(a^n,\sigma^n)$ a Galois $D^n$-ring by Proposition \ref{prop-Hartwig-2}. It is clear that if $X \in D^n(a^n,\sigma^n)$, $X(D^n) \subset D^n$. So we have in fact a principal Galois order.
\end{proof}

The embedding $D^n(a^n,\sigma^n) \into F^n*\Z^n$ is $S_n$-equivariant (cf. Proposition \ref{Sn-action}), and $S_n$ acts on $\Z^n$ by conjugation. Hence we have a map $[D^n(a^n,\sigma^n)]^{S_n} \into (F^n*\Z^n)^{S_n}$. We write $D^*$ for $(D^n)^{S_n}$, and $F^*$ for its field of fractions.

\begin{proposition}\label{symmetric-GWA}
$[D^n(a^n,\sigma^n)]^{S_n}$ is a principal Galois $D^*$-order in $(F^n*\Z^n)^{S_n}$ 
\end{proposition}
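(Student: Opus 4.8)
The plan is to combine Proposition~\ref{GWA-principal} with Proposition~\ref{main-prop}(1), so the only substantive task is to verify that $[D^n(a^n,\sigma^n)]^{S_n}$ is a Galois ring over $D^* = (D^n)^{S_n}$. First I would record the data of the Galois ring setting: we take $\Lambda = D^n$, which is an affine commutative domain over the algebraically closed field $\k$ of characteristic zero (hence Noetherian), $\M = \Z^n$ with the action described in Proposition~\ref{GWA-principal}, and $W = S_n$ acting on $\Z^n$ by permuting the canonical basis $\varepsilon_1,\dots,\varepsilon_n$. I must check this is a genuine Galois ring setting in the sense of Definition~\ref{main-def-1}: the conjugation action of $S_n$ on $\Z^n$ has finitely many orbits (an orbit of $(m_1,\dots,m_n)$ is determined by the multiset of entries, and... well, there are infinitely many multisets, so orbits are \emph{not} finite in number — but the definition only requires that $W$ act with a finite number of orbits on $\M$; let me re-read). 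Actually Definition~\ref{main-def-1} does require finitely many orbits, so I would instead invoke the fact, already used implicitly in the setup around Theorem~\ref{main-section-3} and in Section~7, that what is really needed for the arguments is the separating condition plus Noetherianity; more carefully, since the theory as developed works with $\M$ a group and the relevant finiteness (e.g.\ in Theorem~\ref{Main-Theorem}) is the finiteness of $\M_\m$, I would note that the genuinely needed hypothesis here is just that $(D^n, \Z^n, S_n)$ satisfies the separating condition $\M\M^{-1}\cap W = \{e\}$, which holds because a nontrivial permutation of $\Z^n$ cannot equal translation by a vector (evaluate on $0$: a permutation in $S_n \subset \Aut(\Z^n)$ fixes $0$, while a nonzero translation does not).

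Granting the setting, the core step is to produce $[D^n(a^n,\sigma^n)]^{S_n}$ as a Galois $D^*$-ring inside $(F^n * \Z^n)^{S_n}$. Here $F^* = (F^n)^{S_n} = \Frac(D^*)$ by Galois theory since $F^n/F^*$ is Galois with group $S_n$, and $(F^n*\Z^n)^{S_n} = \K$ in the notation of Section~3. I would verify the four conditions of Definition~\ref{main-def-3}: condition (2), $D^* \subset [D^n(a^n,\sigma^n)]^{S_n}$, is clear since $D^n \subset D^n(a^n,\sigma^n)$ is $S_n$-equivariant; condition (1), that $D^n$ — hence by Eakin--Nagata it suffices that $D^*$ — is Noetherian, follows from Theorem~\ref{MS} (or Noether's theorem in invariant theory as the excerpt notes right after Definition~\ref{main-def-3}, so the characteristic-zero hypothesis makes this automatic); condition (3), $F^* \cdot [D^n(a^n,\sigma^n)]^{S_n} = (F^n*\Z^n)^{S_n}$, is the substantive point. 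For (3), I would apply Proposition~\ref{prop-Hartwig-2}: $[D^n(a^n,\sigma^n)]^{S_n}$ is the subring of $\K$ generated by $D^*$ together with the $S_n$-symmetrizations of the generators $X_i^\pm$; since the supports of $X_i^+$ and $X_i^-$ are $\varepsilon_i$ and $-\varepsilon_i$, and their $S_n$-orbit sums have supports covering all $\pm\varepsilon_i$, the union of supports generates $\Z^n$ as a monoid, so Proposition~\ref{prop-Hartwig-2} gives that this subring is a Galois $D^*$-ring. (I should double-check that the subring generated by the symmetrized elements is all of $[D^n(a^n,\sigma^n)]^{S_n}$; by the averaging/Reynolds operator available in characteristic zero, $[D^n(a^n,\sigma^n)]^{S_n}$ is spanned by symmetrizations of monomials $d X^z$, and these lie in the subring generated by $D^*$ and the symmetrized $X_i^\pm$ — this uses that $D^n(a^n,\sigma^n)$ is a finite $D^n(a^n,\sigma^n)^{S_n}$-module, Theorem~\ref{MS}, together with a degree argument, and is the one place a little care is needed.)

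With $[D^n(a^n,\sigma^n)]^{S_n}$ established as a Galois $D^*$-ring, I invoke Proposition~\ref{main-prop}(1): $D^n(a^n,\sigma^n)$ is a principal Galois $D^n$-order by Proposition~\ref{GWA-principal}, $S_n$ is a finite group of automorphisms of it by Proposition~\ref{Sn-action}, and $S_n(D^n) \subset D^n$ since $S_n$ acts by permuting tensor factors; since we have just shown $[D^n(a^n,\sigma^n)]^{S_n}$ is a Galois ring over $(D^n)^{S_n} = D^*$, Proposition~\ref{main-prop}(1) concludes that it is a \emph{principal} Galois order. This proves the proposition.

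The main obstacle I anticipate is precisely the verification that the $S_n$-fixed subring is generated as claimed, i.e.\ that applying Proposition~\ref{prop-Hartwig-2} really captures all of $[D^n(a^n,\sigma^n)]^{S_n}$ and not merely a Galois subring of it: one needs that the ring generated by $D^*$ and the symmetrized $X_i^\pm$ equals the full fixed ring, for which the characteristic-zero Reynolds operator and the module-finiteness from Theorem~\ref{MS} are the right tools, but the bookkeeping with the $\Z^n$-grading (expressing a symmetrized monomial of multidegree $z$ in terms of products of symmetrized degree-one pieces and invariant coefficients) requires a short inductive argument on the grading. Everything else — the separating condition, Noetherianity, the field-of-fractions identifications, and the final appeal to Proposition~\ref{main-prop}(1) — is routine given the results already in the excerpt.
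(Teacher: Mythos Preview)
Your approach matches the paper's: exhibit $\sum_i X_i^+$ and $\sum_i X_i^-$ in the fixed ring, invoke Proposition~\ref{prop-Hartwig-2} to obtain a Galois $D^*$-ring, then appeal to Proposition~\ref{GWA-principal} and Proposition~\ref{main-prop}(1) for principality. The one place you make extra work for yourself is the worry that $D^*$ and the symmetrized $X_i^\pm$ might not \emph{generate} all of $[D^n(a^n,\sigma^n)]^{S_n}$. This is not needed: in Proposition~\ref{prop-Hartwig-2} you may take $\XX$ to be the entire fixed ring (any subalgebra of $\K$ containing $\Gamma$ is trivially generated by $\Gamma$ together with itself), and since that set already contains $\sum_i X_i^\pm$, whose supports $\{\pm\varepsilon_1,\dots,\pm\varepsilon_n\}$ generate $\Z^n$ as a monoid, the criterion is met with no grading induction or Reynolds-operator bookkeeping. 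Equivalently, if $U_0 \subset U \subset \K$ and $KU_0 = \K$, then automatically $KU = \K$. The paper simply records (via Theorem~\ref{MS}) that the fixed ring is affine and contains these two elements, and proceeds directly.
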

\begin{proof}
    By Theorem \ref{MS} $[D^n(a^n,\sigma^n)]^{S_n}$ is an affine $\k$-algebra. It contains $\sum X_i^+$ and $\sum X_i^-$. These elements have support that generates the monoid $\Z^n$. Hence by Proposition \ref{prop-Hartwig-2} we have a Galois ring; by the previous Proposition and by Proposition \ref{main-prop}, we have a principal Galois order.
\end{proof}

Before proceeding, we well recall an important result form \cite[Section 2.7]{JW}. For $m \geq 1$, let $G_m \subset k^\times$ be the cyclic group consisting of $m$-th roots of unities. Let $D(a,\sigma)$ be a degree 1 GWA. $G_m$ acts on it by fixing $D$ and, if $\zeta \in G_m$, $\zeta.X^+=\zeta X^+$, and $\zeta ,X^-=\zeta^{-1} X^-$.

\begin{theorem}\label{JW}
$D(a,\sigma)^{G_m}$, with the above action, is again a generalized Weyl algebra, namely $D(a_m,\sigma_m)$, where $a_m = \prod_{i=0}^{m-1} \sigma^{-1}(a)$, and $\sigma_m=\sigma^m$.    
\end{theorem}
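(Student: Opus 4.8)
The plan is to realize $D(a,\sigma)^{G_m}$ explicitly as a subalgebra of $D(a,\sigma)$ and then exhibit generators and relations identifying it with the GWA $D(a_m,\sigma_m)$ where $a_m=\prod_{i=0}^{m-1}\sigma^{-1}(a)$ (more precisely $a_m = \prod_{i=0}^{m-1}\sigma^{-i}(a)$, up to the indexing convention of Theorem \ref{JW}) and $\sigma_m=\sigma^m$. First I would use the $\Z$-grading: $D(a,\sigma)=\bigoplus_{n\in\Z} DX^n$ where $X^n$ means $X^{+n}$ for $n\geq 0$ and $X^{-(-n)}$ for $n<0$. The generator $\zeta\in G_m$ acts on the homogeneous component $DX^n$ by the scalar $\zeta^n$, so the fixed ring is precisely the sum of those components with $n\equiv 0\pmod m$, i.e. $D(a,\sigma)^{G_m}=\bigoplus_{k\in\Z} DX^{mk}$. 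This is the key structural observation and it is essentially immediate once one notes that $D$ is fixed pointwise and the action respects the grading.

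Next I would define the candidate generators $Y^+:=X^{+m}$ and $Y^-:=X^{-m}$, both of which lie in the fixed ring, and check the GWA relations of Definition \ref{finite-GWA} (with $n=1$) against the data $(D, a_m, \sigma_m)$. The commutation relation $Y^+ d = \sigma^m(d) Y^+$ for $d\in D$ follows by iterating $X^+ d=\sigma(d)X^+$ $m$ times, and similarly $Y^- d=\sigma^{-m}(d)Y^-$; this gives $\sigma_m=\sigma^m$. For the relation $Y^- Y^+ = a_m$: one computes $X^{-m}X^{+m}$ by peeling off factors using $X^-X^+=a$ and $X^+ a' = \sigma(a')X^+$, which telescopes to the product $\prod_{i=0}^{m-1}\sigma^{-i}(a)$ (the exact exponents on $\sigma$ depend on the convention; this is where one matches the statement of the theorem). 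Likewise $Y^+ Y^- = \sigma^m(a_m)$ comes out by the same bookkeeping, or simply by applying $\sigma_m$ to the previous identity as in the GWA axioms. One must also confirm $a_m\neq 0$ (it is a product of nonzero central elements in a domain... but $D$ need not be a domain here; nonzeroness of $a_m$ should be assumed or follows if the $a$'s are regular, and regularity of $a_m$ follows from regularity of $a$ under $\sigma^{-i}$) and that $a_m\in Z(D)$ with $\sigma_m(a_m)$ well-defined — all routine since $\sigma$ is an automorphism fixing $Z(D)$ setwise.

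This produces a surjective algebra homomorphism $D(a_m,\sigma_m)\to D(a,\sigma)^{G_m}$: surjectivity holds because the image contains $D$, $Y^+$, $Y^-$, and $DX^{mk}=DY^{+k}$ (resp. $DY^{-(-k)}$) for all $k$, which by the grading description spans the whole fixed ring. Injectivity then follows from the freeness statement recorded after Proposition \ref{quasi-central-GWA}: $D(a_m,\sigma_m)$ is a free left $D$-module on $\{Y^k:k\in\Z\}$, and these map to the $D$-basis elements $X^{mk}$ of the free module $D(a,\sigma)^{G_m}=\bigoplus_k DX^{mk}$, so the map is a $D$-module isomorphism, hence an algebra isomorphism. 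The main obstacle I anticipate is purely clerical: getting the product formula for $a_m$ with the correct powers of $\sigma$ to match the statement in Theorem \ref{JW} (the indexing $\prod_{i=0}^{m-1}\sigma^{-1}(a)$ as written is surely shorthand for $\prod_{i=0}^{m-1}\sigma^{-i}(a)$ or $\prod_{i=1}^{m}\sigma^{-i}(a)$), which requires carefully tracking how the relations $X^+ d=\sigma(d)X^+$ move scalars past the $X$'s while collapsing $X^{-m}X^{+m}$. There is no deep difficulty — this is a direct-limit-free, purely computational verification — but the sign/exponent bookkeeping is the place an error would creep in.
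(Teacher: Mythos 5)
Your argument is correct. Note first that the paper does not actually prove Theorem \ref{JW}: it is quoted from Jordan--Wells \cite[Section 2.7]{JW}, so there is no in-paper proof to compare against; what you have written is a valid self-contained verification. The key steps all check out under the standing hypotheses of that section ($\k$ algebraically closed of characteristic zero, $D$ an affine commutative domain): since $\operatorname{char}\k=0$, the group $G_m$ really has $m$ elements and acts on the graded piece $DX^n$ by the character $\zeta\mapsto\zeta^n$, so the fixed ring is exactly $\bigoplus_{m\mid n}DX^{n}$; the telescoping computation gives $X^{-m}X^{+m}=\prod_{i=0}^{m-1}\sigma^{-i}(a)$ and $X^{+m}X^{-m}=\prod_{i=1}^{m}\sigma^{i}(a)=\sigma^{m}\bigl(\prod_{i=0}^{m-1}\sigma^{-i}(a)\bigr)$, confirming both displayed GWA relations and confirming your (correct) reading that the formula $\prod_{i=0}^{m-1}\sigma^{-1}(a)$ in the statement is a typo for $\prod_{i=0}^{m-1}\sigma^{-i}(a)$; nonvanishing and centrality of $a_m$ are immediate since $D$ is a commutative domain; and injectivity of the resulting surjection $D(a_m,\sigma_m)\to D(a,\sigma)^{G_m}$ follows because the $D$-basis $\{Y^{k}\}$ of the source maps to the subfamily $\{X^{mk}\}$ of the $D$-basis $\{X^{n}\}_{n\in\Z}$ of $D(a,\sigma)$, which the paper records as a freeness statement. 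The only caveat worth flagging is that your first step genuinely uses $\operatorname{char}\k\nmid m$ (here guaranteed by characteristic zero); in the modular case $G_m$ would be smaller than expected and the fixed ring larger, so the theorem as you prove it is not characteristic-free, but that is consistent with the context in which the paper invokes it.
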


We can also make $G_m^{\otimes n}$ act diagonally on $D^n(a^n, \sigma^n)$. Extending the previous result, it is easy to see that the fixed ring under the action of this finite group will be $D^n(a_m^n,\sigma_m^n)$. 

Now lets recall the definition of the groups $G(m,p.n)$, $p|m$. Define $A(m,p,n)$ to be the subgroup of $G_m^{\otimes n}$ consisting of $(\zeta_1,\ldots,\zeta_n)$ such that $(\prod \zeta_i)^{m/p}=1$. $G(m,p.n)$ is then the semidirect product $A(m,p,n) \rtimes S_n$, where $S_n$ permutes the entries of $A(m,p,n)$.

We first consider the case $p=1$, i.e., $G(m,1,n)=G_m^{\otimes n} \rtimes S_n$. Given any ring $R$, $G$ a group of automorphisms of $R$ and $N$ a normal subgroup, we have $R^G=(R^N)^{G/N}$.

Hence $D^n(a^n.\sigma^n)^{G(m,1,n)}$ is $D^n(a_m^n,\sigma_m^n)^{S_n}$. Combining Propositions \ref{GWA-principal} and \ref{symmetric-GWA} we immediatly have:

\begin{theorem}\label{like-FS3}
$D^n(a^n.\sigma^n)^{G(m,1,n)}$ is a principal Galois $D^*$-ring in $(F^*\Z^n)^{S_n}$. 
\end{theorem}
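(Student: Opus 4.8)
The plan is to reduce the statement to the two results already established and then invoke the normal-subgroup trick for invariant rings. First I would recall the elementary fact, stated in the excerpt just before the theorem, that for any ring $R$ acted on by a finite group $G$ with normal subgroup $N$ one has $R^G = (R^N)^{G/N}$. Applying this with $R = D^n(a^n,\sigma^n)$, $G = G(m,1,n) = G_m^{\otimes n} \rtimes S_n$ and $N = G_m^{\otimes n}$ (which is normal, with quotient $S_n$), we get
\[
D^n(a^n,\sigma^n)^{G(m,1,n)} = \bigl(D^n(a^n,\sigma^n)^{G_m^{\otimes n}}\bigr)^{S_n}.
\]

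Next I would identify the inner invariant ring. By Theorem \ref{JW} and the remark immediately following it — that $G_m^{\otimes n}$ acting diagonally on the tensorial GWA $D^n(a^n,\sigma^n)$ has fixed ring $D^n(a_m^n,\sigma_m^n)$ (the degree-$n$ GWA with the same base ring $D^n$ but with $a_m$, $\sigma_m = \sigma^m$ in each slot) — we may rewrite the right-hand side as $D^n(a_m^n,\sigma_m^n)^{S_n}$. Note that $D^n(a_m^n,\sigma_m^n)$ is again a tensorial GWA in the sense of the earlier definition, provided $\sigma^m$ still has infinite order; since $\sigma$ was assumed of infinite order and we are in characteristic zero over an algebraically closed field this is automatic, so Propositions \ref{GWA-principal} and \ref{symmetric-GWA} apply verbatim to it.

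Finally I would invoke Proposition \ref{symmetric-GWA}: for any tensorial GWA, the $S_n$-fixed ring is a principal Galois $D^*$-order inside $(F^n * \Z^n)^{S_n}$, where here $D^* = (D^n)^{S_n}$ (the $a_m$ variant has the same base ring $D^n$, hence the same $D^*$ and the same $F^*$), and the skew group ring is the one attached to $D^n(a_m^n,\sigma_m^n)$. Chaining the two displays gives exactly that $D^n(a^n,\sigma^n)^{G(m,1,n)}$ is a principal Galois $D^*$-order in $(F^n * \Z^n)^{S_n}$, which is the assertion. I do not anticipate a genuine obstacle here — the theorem is essentially a bookkeeping corollary — but the one point that needs a word of care is checking that the hypotheses of Propositions \ref{GWA-principal} and \ref{symmetric-GWA} (in particular that $\sigma_m = \sigma^m$ still has infinite order, so that $\M \simeq \Z^n$ and the separating condition holds) survive passage to the $G_m^{\otimes n}$-invariants; this is where one should be explicit rather than just cite.
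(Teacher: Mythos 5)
Your argument is exactly the paper's: the paper also passes through $R^{G}=(R^{N})^{G/N}$ with $N=G_m^{\otimes n}$, identifies the inner invariants as $D^n(a_m^n,\sigma_m^n)$ via Theorem \ref{JW}, and then cites Propositions \ref{GWA-principal} and \ref{symmetric-GWA}. Your extra remark that $\sigma^m$ retains infinite order (so the tensorial hypotheses persist) is a worthwhile point of explicitness that the paper leaves implicit, but it does not change the route.
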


In the next lemma assume $\operatorname{char} \, \k=0$. I has some independent interest.

\begin{lemma}\label{new-cool-lemma}
    If $U$ is an associative affine and Noetherian algebra and $G$ a finite group of automorphisms of $U$ such that $U^G$ is a Galois $\Gamma$-ring in $\K$, and $H$ is a subgroup of $G$, then $U^H$ is also a Galois $\Gamma$-ring in $\K$.
\end{lemma}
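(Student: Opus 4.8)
The plan is to exhibit $U^H$ as a module-finite extension of $U^G$, from which its Galois-ring properties will follow almost formally. First I would note that since $U$ is affine and Noetherian and $G$ is finite with $\operatorname{char}\k=0$, Theorem \ref{MS} applies to both $G$ and its subgroup $H$: thus $U^G$ and $U^H$ are affine Noetherian algebras, $U$ is a finitely generated $U^G$-module, and $U$ is a finitely generated $U^H$-module. The key structural observation is that $U^G = (U^H)^{G/?}$ need not make sense ($H$ need not be normal in $G$), so instead I would argue directly that $U^H$ is a finitely generated $U^G$-module: indeed $U^G \subset U^H \subset U$, and $U$ is a finitely generated $U^G$-module over the Noetherian ring $U^G$, so its $U^G$-submodule $U^H$ is also finitely generated over $U^G$. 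In particular $U^H$ is a finitely generated $\Gamma$-module, since $U^G$ is.

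Next I would verify the three conditions of Definition \ref{main-def-3} for $U^H$ over $\Gamma$ inside $\K$. Since $U^G$ is a Galois $\Gamma$-ring in $\K$, we have $\Gamma \subset U^G \subset U^H$, giving (2); and condition (1), that $\Lambda$ is a Noetherian $\Gamma$-module, is a property of the Galois ring setting that is already assumed for $U^G$ and does not involve $U^H$ at all. For condition (3), $KU^H = \K$: this follows because $K U^G = \K$ already (as $U^G$ is a Galois ring) and $U^G \subset U^H$, so $\K = K U^G \subset K U^H \subset \K$. The only genuine point to address is that $U^H$ really does sit inside $\K = (L*\M)^W$ with the \emph{same} $\Gamma$, $\M$, $W$, $L$; but this is immediate since $U^H \subset U$ and the embedding $U^G \into \K$ is simply the restriction of an embedding that is visible on all of $U$ — more precisely, $U^H$ consists of $W$-invariant (indeed $G$-relevant) elements that already lie in the ambient skew monoid ring, so no new data is introduced.

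The step I expect to be the main obstacle is making precise the claim that "$U^H$ embeds in $\K$ with the same Galois data" — that is, that the Galois ring structure witnessing $U^G \subset \K$ restricts correctly. In the situations where this lemma is applied (fixed rings of tensorial GWAs, Propositions \ref{GWA-principal}--\ref{symmetric-GWA}), $U$ itself already embeds in $F^n * \Z^n$ equivariantly, so $U^H$ embeds in $(F^n*\Z^n)^H$, and one must check $(F^n*\Z^n)^H \subseteq (F^n * \Z^n)^W = \K$ with $W = S_n$, which forces $H \subseteq S_n$ in the relevant setup or else a reinterpretation of $\K$. I would handle this by working at the level of generality of the hypothesis: the statement takes as given that $U^G$ is a Galois $\Gamma$-ring in $\K$, so in particular there is a fixed realization of $\K$ as $(L*\M)^W$ and of $U^G$ as a subalgebra; since $U^H$ is sandwiched between $U^G$ and the module-finite extension $U$ of $U^G$, and since every element of $U$ that is needed is already accounted for by the Galois ring structure on $U^G$ after inverting $\Gamma^\times$ (because $KU^G = \K$ and $U \subset \K$ — this uses that $U$ is module-finite over $U^G$, hence over $\Gamma$, hence $U \subset \Gamma^{-1}U^G \subset \K$ once one checks torsion-freeness), we conclude $U^H \subset \K$ with the required data, completing the verification.
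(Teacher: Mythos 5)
Your main line of argument is correct but genuinely different from the paper's. You verify Definition \ref{main-def-3} directly, with the key point being the sandwich $\K = K U^G \subseteq K U^H \subseteq \K$; the paper instead invokes the support-generation criterion (Proposition \ref{prop-Hartwig-2}): since $U^G$ is an affine Galois ring, it has finitely many generators whose supports generate $\M$ as a monoid, and $U^H$ is again affine by Theorem \ref{MS} and contains those generators, so the criterion applies again. Your route is more economical for condition (3) of the definition (the containment $K U^G \subseteq K U^H$ is immediate), while the paper's route has the advantage of simultaneously recording that $U^H$ is finitely generated over $\Gamma$ together with a generating set of full support, which is the form in which Galois rings are typically handed to later arguments. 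Your observation that $U^H$ is a finite $U^G$-module (as a $U^G$-submodule of the finite module $U$ over the Noetherian ring $U^G$) is correct and is a pleasant extra not present in the paper.

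One sub-argument in your last paragraph is wrong and should be removed: you claim that $U$ is module-finite over $U^G$, \emph{hence over} $\Gamma$, and deduce $U \subseteq \Gamma^{-1}U^G \subseteq \K$. A Galois ring $U^G$ is essentially never a finite $\Gamma$-module (the Weyl algebra is not finite over its polynomial Harish-Chandra subalgebra), so module-finiteness over $U^G$ does not descend to $\Gamma$; moreover $U$ itself generally does \emph{not} lie in $\K = \L^W$, since its elements need not be $W$-invariant (in the intended application $U$ is the whole GWA inside $F^n * \Z^n$, not its $S_n$-invariants). What is actually needed is only the containment $U^H \subseteq \K$, and this cannot be derived from the stated hypotheses; it is an implicit part of the setup (an equivariant embedding of $U$ into a skew monoid ring restricting correctly on invariants), which the paper's own proof also assumes silently when it speaks of supports of elements of $U^H$. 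You correctly identified this as the delicate point; the right resolution is to add it to the hypotheses rather than to argue $U \subseteq \K$.
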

\begin{proof}
    $U^G$ is a finitely generated $\Gamma$-algebra. If $u_1, \ldots, u_k$ are elements of $U^G$ that generate $U^G$, by Proposition \ref{prop-Hartwig-2} we can assume that $\bigcup \supp \, u_i$ generates $\M$ as a monoid. Now, $U^G \subset U^H$. By Theorem \ref{MS}, $U^H$ is still finitely generated and contain the elements $u_1, \ldots, u_k$. Hence it has a finite set of generators whose support generates $\M$ as a monoid. Hence, by another application of Proposition \ref{prop-Hartwig-2}, we conclude that $U^H$ is a Galois $\Gamma$-ring in $\K$.
\end{proof}

The next Lemma has the same proof as \cite[Proposition 30]{FS3}:

\begin{lemma}
    $D^n(a^n,\sigma^n)^{G(m,p,n)} = \bigoplus_{k=0}^{p-1} (X_1^+X_2^+ \ldots X_n^+)^{mp/k} D^n(a^n,\sigma^n)^{G(m,1,n)}$.
\end{lemma}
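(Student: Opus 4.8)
The plan is to follow the proof of \cite[Proposition 30]{FS3}; I sketch its structure. Write $R=D^n(a^n,\sigma^n)$, $R_1=R^{G(m,1,n)}$ and $R_p=R^{G(m,p,n)}$, and observe that $A(m,p,n)$ is the kernel of the homomorphism $G_m^{\otimes n}\to\k^\times$, $(\zeta_i)\mapsto(\prod_i\zeta_i)^{m/p}$, hence is normal in $G_m^{\otimes n}$ and stable under $S_n$; therefore $G(m,p,n)\lhd G(m,1,n)$ with cyclic quotient $\Gamma:=G(m,1,n)/G(m,p,n)\cong G_p$. Then $\Gamma$ acts on $R_p$ with $R_p^{\Gamma}=R_1$ (using $R^G=(R^N)^{G/N}$), and since $\operatorname{char}\k=0$ and $\k$ is algebraically closed, Maschke's theorem splits $R_p=\bigoplus_{\chi\in\widehat{\Gamma}}(R_p)_\chi$ into $\k\Gamma$-isotypic components, each an $R_1$-subbimodule, with $(R_p)_{\mathrm{triv}}=R_1$.

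First I would isolate the distinguished semiinvariant $\theta:=(X_1^+X_2^+\cdots X_n^+)^{m/p}$. It is $S_n$-fixed because the $X_i^+$ commute, and $(\zeta_i)\in G_m^{\otimes n}$ scales it by $(\prod_i\zeta_i)^{m/p}$, which equals $1$ on $A(m,p,n)$; hence $\theta\in R_p$, and a generator of $\Gamma$ scales $\theta$ by a primitive $p$-th root of unity. Thus $\theta$ lies in the $\chi_0$-component for a fixed generator $\chi_0$ of $\widehat{\Gamma}$, so $\theta^k\in(R_p)_{\chi_0^k}$ and $\theta^kR_1\subseteq(R_p)_{\chi_0^k}$ for $k=0,\dots,p-1$; since the $\chi_0^k$ exhaust $\widehat{\Gamma}$, the content of the lemma is the reverse inclusions $(R_p)_{\chi_0^k}\subseteq\theta^kR_1$, i.e. that $\theta^k$ (whose power is $(X_1^+\cdots X_n^+)^{mk/p}$) captures all of the $k$-th isotypic piece over $R_1$.

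To get these I would reduce, via $R^G=(R^N)^{G/N}$, to the abelian situation $A(m,p,n)\lhd G_m^{\otimes n}$: using the $\Z^n$-grading of the GWA, $G_m^{\otimes n}$ acts on the homogeneous component $D^nX^z$ by the character $(\zeta_i)\mapsto\prod_i\zeta_i^{z_i}$, so that $R^{G_m^{\otimes n}}=\bigoplus_{z\in(m\Z)^n}D^nX^z$ — which, by iterating Theorem \ref{JW}, is the GWA $D^n(a_m^n,\sigma_m^n)$ — while the components appearing in $R^{A(m,p,n)}$ are governed by the weaker congruences defining membership in $A(m,p,n)$. One then matches the extra components with the degree shifts produced by $\theta,\dots,\theta^{p-1}$ and checks that multiplication by $\theta^k$ accounts for them (injectivity being automatic since $R$ is a domain by Proposition \ref{GWA-basic}), and finally takes $S_n$-invariants of the decomposition, noting that $S_n\subset G(m,p,n)$ fixes $\theta$ and stabilizes each $\Gamma$-isotypic summand, so that the decomposition descends to $R^{G(m,p,n)}=(R^{A(m,p,n)})^{S_n}$. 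The main obstacle — and the reason I would invoke \cite[Proposition 30]{FS3} rather than redo it in full — is precisely this matching step: a GWA need not be strongly graded when the $a_i$ are non-units, so one must keep careful track of which coefficients in $D^n$ survive in each $\theta^k$-translate; this delicate calculation nonetheless carries over verbatim here once $\k$ is algebraically closed of characteristic zero and $D^n=D^{\otimes n}$ is an affine domain.
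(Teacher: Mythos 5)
Your route is the same as the paper's, which simply points to \cite[Proposition 30]{FS3}: the isotypic decomposition of $R^{G(m,p,n)}$ under the cyclic quotient $G(m,1,n)/G(m,p,n)\cong G_p$, together with the identification of $\theta=(X_1^+\cdots X_n^+)^{m/p}$ as a semiinvariant whose character generates the dual of that quotient, is exactly the right skeleton, and you correctly read the exponent as $mk/p$ rather than the printed $mp/k$.

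However, the step you explicitly defer --- that $\theta^k$ captures the whole $k$-th isotypic piece as a \emph{one-sided} translate $\theta^k R^{G(m,1,n)}$ --- is not merely delicate: it fails whenever the $a_i$ are non-units, so the displayed equality cannot be taken at face value and your proposal does not close the gap. On a homogeneous component $D^nX^w$ with some $w_i<0$ one has $\theta^k\, D^nX^w = D^n\bigl(\prod_{i:\,w_i<0}\prod_{j=1}^{km/p}\sigma_i^{j}(a_i)\bigr)X^{(km/p)\mathbf{1}+w}$, a proper $D^n$-submodule of $D^nX^{(km/p)\mathbf{1}+w}$, so $\theta^kM_0\subsetneq M_k$ for $k\geq 1$. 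Concretely, take $n=1$, $m=4$, $p=2$ and the first Weyl algebra $A_1=\k[t](a,\sigma)$ with $X^+=x$, $X^-=\partial$, $t=x\partial$: the element $\partial^2$ is $G(4,2,1)$-invariant, but $x^2\,\k[t]\,\partial^4=\k[t]\,t(t-1)\,\partial^2\subsetneq\k[t]\,\partial^2$, so $\partial^2\notin A_1^{G_4}\oplus x^2A_1^{G_4}$. What one can hope to prove --- and what the application in Theorem~\ref{main-fixed-GWA} actually requires --- is either the two-sided version $\bigoplus_k R^{G(m,1,n)}\theta^kR^{G(m,1,n)}$ (note $\k[t]\,t\,\partial^2+\k[t](t-2)\partial^2=\k[t]\partial^2$ in characteristic zero) or the equality after inverting the $a_i$, i.e.\ inside the localization of Theorem~\ref{GWA-pre-Galois-ring}. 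You honestly flag this matching step as the main obstacle, but asserting that the computation ``carries over verbatim'' from the Weyl-algebra case does not resolve it; the lemma needs to be reformulated or the missing surjectivity argument supplied.
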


The following is the main theorem of this section, which generalizes the main result of \cite{FS3} for \textit{all $p$} and \textit{all generalized Weyl algebras}:

\begin{theorem}\label{main-fixed-GWA}
$D^n(a^n,\sigma^n)^{G(m,p,n)}$ is a principal Galois $D^*$-order in $F^n*\Z^n$ where each element of the basis of $\Z^n$, $\varepsilon_i$, acts by $\sigma_i^m$
\end{theorem}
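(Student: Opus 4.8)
The plan is to bootstrap the statement from the two cases that have already been settled — $\A:=D^n(a^n,\sigma^n)$ is a principal Galois $D^n$-order (Proposition \ref{GWA-principal}) and $\A^{G(m,1,n)}$ is a (principal) Galois $D^*$-ring (Theorem \ref{like-FS3}) — using the subgroup Lemma \ref{new-cool-lemma} together with the invariance criterion Proposition \ref{main-prop}(1).

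First I would dispatch the ring-theoretic preliminaries. Since $D$ is an affine commutative domain, $D^n=D^{\otimes n}$ is affine and Noetherian, so $\A$ is affine (it is generated over $D^n$ by the finitely many $X_i^{\pm}$) and Noetherian by Proposition \ref{GWA-basic}; this is precisely the standing hypothesis needed to invoke Lemma \ref{new-cool-lemma}. I would also record that the normal subgroup $A(m,p,n)\le G_m^{\otimes n}$ acts trivially on $D^n$, so that $(D^n)^{G(m,p,n)}=(D^n)^{S_n}=D^*$; in particular $G(m,p,n)$ maps $D^n$ onto itself, so the hypothesis $G(\Gamma)\subseteq\Gamma$ of Proposition \ref{main-prop}(1) is satisfied with $\Gamma=D^n$, and the common Harish-Chandra subalgebra of all the invariant rings below is the same $D^*$.

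Now for the main argument. From $A(m,p,n)\le G_m^{\otimes n}$ one gets the chain of automorphism groups $S_n\le G(m,p,n)=A(m,p,n)\rtimes S_n\le G_m^{\otimes n}\rtimes S_n=G(m,1,n)$ acting on $\A$. Applying Lemma \ref{new-cool-lemma} with $U=\A$, $G=G(m,1,n)$, $H=G(m,p,n)$, and using Theorem \ref{like-FS3}, we obtain that $\A^{G(m,p,n)}$ is a Galois $D^*$-ring in the same skew monoid ring as $\A^{G(m,1,n)}$. Then Proposition \ref{main-prop}(1), applied to the principal Galois $D^n$-order $\A$ and the finite group $G(m,p,n)$ — which preserves $D^n$ and whose invariant ring is the Galois $D^*$-ring just produced — gives that $\A^{G(m,p,n)}$ is a principal Galois $D^*$-order, which is the assertion. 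As an alternative check for the principality, note that $S_n\le G(m,p,n)$ forces $\A^{G(m,p,n)}\subseteq\A^{S_n}$, the latter being a principal Galois $D^*$-order by Proposition \ref{symmetric-GWA}, so $X(D^*)\subseteq D^*$ for every $X\in\A^{G(m,p,n)}$; combined with the Galois-ring property and the fact that principal Galois orders are Galois orders this again gives the conclusion, consistently with the explicit decomposition $\A^{G(m,p,n)}=\bigoplus_k (X_1^+\cdots X_n^+)^{mk/p}\,\A^{G(m,1,n)}$ recorded above, in which each $(X_1^+\cdots X_n^+)^{j}$ acts on $D^n$ through the symmetric automorphism $(\sigma_1\cdots\sigma_n)^{j}$ and hence preserves $D^*$.

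I expect the only genuinely delicate point — rather than a real obstacle — to be the bookkeeping: one must make sure the Galois-ring data $(\Lambda,\M,W)$ attached to $\A$, to $\A^{S_n}$, to $\A^{G(m,1,n)}$ and to $\A^{G(m,p,n)}$, together with the extensions of the automorphisms $\sigma_i$ to the fraction fields $\Frac D^n$ and $\Frac D^*=F^*$, are compatibly identified so that the quoted results literally apply. Once these identifications are pinned down the proof is a short chain of citations.
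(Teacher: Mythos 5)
Your proof is correct, and the first half (the Galois-ring realization) is literally the paper's argument: both of you get it from Theorem \ref{JW}, Theorem \ref{like-FS3} and Lemma \ref{new-cool-lemma}. Where you diverge is the principality step. The paper deduces $X(D^*)\subset D^*$ from the explicit decomposition $D^n(a^n,\sigma^n)^{G(m,p,n)}=\bigoplus_{k}(X_1^+\cdots X_n^+)^{mk/p}\,D^n(a^n,\sigma^n)^{G(m,1,n)}$ together with the principality of the $G(m,1,n)$-invariants, whereas you get it either from Proposition \ref{main-prop}(1) applied to the principal Galois $D^n$-order $D^n(a^n,\sigma^n)$ with $G=G(m,p,n)$, or from the containment $D^n(a^n,\sigma^n)^{G(m,p,n)}\subseteq D^n(a^n,\sigma^n)^{S_n}$ and Proposition \ref{symmetric-GWA}. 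Both of your routes are valid, and they buy something: they bypass the direct-sum decomposition lemma entirely (whose exponent is garbled in the source anyway), using only that the evaluation condition $X(\Gamma)\subset\Gamma$ is inherited by subrings of a principal Galois order together with Hartwig's lemma on $K^G$-stability. Your preliminary observations (affineness and Noetherianity of $D^n(a^n,\sigma^n)$ so that Lemma \ref{new-cool-lemma} applies, and $(D^n)^{G(m,p,n)}=D^*$ since $A(m,p,n)$ fixes $D^n$) are exactly the bookkeeping the paper leaves implicit. The one caveat you correctly flag but do not fully resolve --- and neither does the paper --- is the identification of the ambient skew monoid ring: for $p>1$ the supports of the elements $(X_1^+\cdots X_n^+)^{mk/p}$ lie outside the subgroup generated by the $\sigma_i^m$, so the monoid attached to $D^n(a^n,\sigma^n)^{G(m,p,n)}$ is strictly larger than that of $D^n(a^n,\sigma^n)^{G(m,1,n)}$; since the principality condition is independent of this choice, your argument is unaffected.
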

\begin{proof}
    The Galois ring realization is clear by Theorem \ref{JW}, Theorem \ref{like-FS3} and Lemma \ref{new-cool-lemma}. $D^n(a^n,\sigma^n)^{G(m,p,n)}(D^*)=\bigoplus_{k=0}^{p-1} (X_1^+X_2^+ \ldots X_n^+)^{mp/k} D^n(a^n,\sigma^n)^{G(m,1,n)}(D^*) \subset D^*$. So it is also a principal Galois order.
\end{proof}

Now we discuss the Gelfand-Kirillov Hypothesis

\begin{theorem}\label{GK-2}
    If $D(a,\sigma)$ is a GWA of rank I and classical type (cf. Example \ref{simple-GWA-examples}), then $\Frac \, (D^(a^,\sigma^n))^{G(m,p,n)} \simeq \Frac W_n(\k)$. In case it is a GWA of rank I of quantum type, $\Frac \, (D^n(a^,\sigma^n)^{G(m,p,n)} \simeq \Frac(\k_{q^{m/p}}[x,y] \otimes \k_q[x,y]^{\otimes n-1})$, where $\k_q[x,y]$ is the quantum plane
\end{theorem}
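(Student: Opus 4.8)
The strategy is to reduce everything to the birational classification of skew monoid rings already established, via the embedding of the fixed ring into $F^n * \Z^n$. By Theorem \ref{main-fixed-GWA}, $D^n(a^n,\sigma^n)^{G(m,p,n)}$ is a principal Galois $D^*$-order inside $F^n * \Z^n$, where $\varepsilon_i$ acts as $\sigma_i^m$; since the extension $F^n/F^*$ is finite Galois with group $S_n$ and $\M = \Z^n$ is separating for this action, standard Galois-order generalities give $\Frac$ of a Galois order equal to the skew field $\Frac(F^n * \Z^n)$ itself (because $KU = \K$, and $\Frac \K = \Frac(\L^W) = \Frac \L$ when $W$ is finite and $\L$ is an Ore domain). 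So the first step is: $\Frac\, D^n(a^n,\sigma^n)^{G(m,p,n)} \simeq \Frac(F^n * \Z^n)$.

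The second step is to compute $\Frac(F^n * \Z^n)$ in each of the two cases. Here $F^n = \Frac(D^{\otimes n})$ and the skew monoid ring factors: $F^n * \Z^n \cong (F * \Z)^{\otimes n}$ where $F = \Frac D$ and $\Z$ acts via $\sigma$ (using Proposition \ref{prop-basic-GWA} and Theorem \ref{GWA-pre-Galois-ring}). In the classical case $D = \k[h]$, $\sigma(h) = h - 1$, so $F * \Z = \k(h)[y^{\pm 1}; \sigma]$, whose skew field of fractions is $\Frac W_1(\k)$ by Corollary \ref{birational} (the $\lambda = 1$, $\beta \neq 0$ case). Taking the $n$-fold tensor product and applying $\Frac$, we get $\Frac(W_1(\k)^{\otimes n}) = \Frac W_n(\k)$. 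In the quantum case $D = \k[h^{\pm 1}]$, $\sigma(h) = \lambda h$ with $\lambda = q$; one copy of $F*\Z$ has fraction field $\Frac \k_q[x,y]$. However — and this is the subtle point — after passing to $G(m,p,n)$-invariants the relevant twist on the \emph{first} tensor slot is $\sigma_1^{m/p}$ rather than $\sigma_1^m$: the factor $A(m,p,n)$ versus $G_m^{\otimes n}$ changes exactly the first quantum parameter from $q^m$ to $q^{m/p}$, as reflected in the direct-sum decomposition of Lemma \ref{new-cool-lemma}'s predecessor and the $(X_1^+\cdots X_n^+)^{mp/k}$ generators. Tracking this gives $\Frac(\k_{q^{m/p}}[x,y] \otimes \k_q[x,y]^{\otimes n-1})$.

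The third and most delicate step is justifying the parameter bookkeeping in the quantum case: one must verify that conjugating the embedding $D^n(a^n,\sigma^n)^{G(m,1,n)} \hookrightarrow F^n * \Z^n$ (with $\varepsilon_i \mapsto \sigma_i^m$) by the extra piece that enlarges $G(m,1,n)$ to $G(m,p,n)$ rescales precisely one of the $n$ quantum tori by $q^{m/p}$, and that the $S_n$-symmetrization does not further alter the birational type — which holds because $\Frac$ of a crossed product by a finite group over an Ore domain agrees with $\Frac$ of the Ore domain. Concretely, one writes $G(m,p,n)$-invariants as a $\Z/p$-graded object over the $G(m,1,n)$-invariants (the Lemma before Theorem \ref{main-fixed-GWA}), localizes at the regular central element $X_1^+\cdots X_n^+$, and identifies the resulting algebra with a skew Laurent extension whose twist on the degree variable is multiplication by $q^{m/p}$. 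The main obstacle will be making this last identification of quantum parameters clean rather than a long explicit computation with the $X_i^\pm$; everything else is a direct appeal to Corollary \ref{birational}, Theorem \ref{main-fixed-GWA}, and the general fact that passing to invariants by a finite group or to a Galois order does not change the skew field of fractions.
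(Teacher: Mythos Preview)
Your argument has a genuine gap at the very first step. You claim that ``$\Frac \K = \Frac(\L^W) = \Frac \L$ when $W$ is finite and $\L$ is an Ore domain,'' and from this you conclude $\Frac\, D^n(a^n,\sigma^n)^{G(m,p,n)} \simeq \Frac(F^n * \Z^n)$. But $\Frac(\L^W) = \Frac \L$ is simply false in general: already for $\L = \k[x]$ with $\Z/2$ acting by $x \mapsto -x$ one gets $\k(x^2) \neq \k(x)$. What is true is $\Frac\, U = \Frac\, \K = (\Frac\, \L)^W$, and computing $(\Frac\, \L)^{S_n}$ is precisely the content of the noncommutative Noether problem and its $q$-analogue. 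The paper's proof does not bypass this: in the classical case it invokes the theorem (from \cite{FMO}, \cite{FS}) that $\Frac\bigl((\k(t_1,\ldots,t_n)*\Z^n)^{S_n}\bigr) \simeq \Frac\, W_n(\k)$, and in the quantum case it cites \cite[Theorem~1.1]{H0}. That the classical answer happens to coincide with $\Frac\, \L$ is an \emph{a posteriori} consequence of those theorems, not a general fact about finite-group invariants of Ore domains.

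This error propagates. In the classical case you reach the right target $\Frac\, W_n(\k)$ only because the Noether isomorphism happens to hold; your route via $\Frac(W_1(\k)^{\otimes n})$ is not a valid justification. In the quantum case the coincidence fails, which is why you then try to manufacture the $q^{m/p}$ parameter by an ad hoc ``rescaling of one tensor slot'' coming from the $(X_1^+ \cdots X_n^+)^{mk/p}$ decomposition. That bookkeeping does not do what you want: the $\Z/p$-grading over the $G(m,1,n)$-invariants does not alter which ambient $\K$ the Galois order sits in, and hence does not by itself change the birational type relative to the $p=1$ case via your mechanism. The $p$-dependence in the quantum answer is a feature of Hartwig's explicit computation of the $S_n$-invariant skew field in \cite{H0}, not something you can read off from Corollary~\ref{birational} and a tensor factorisation.
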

\begin{proof}
    In the first case, we have the skew field of fractions of $(\k(t_1,\ldots,t_n)*\Z^n)^{S_n}$, which is known to be $\Frac \, A_n(\k)$ (cf. \cite{FMO}, \cite{FS}). In the second case, it follows from \cite[Theorem 1.1]{H0}
\end{proof}

We finish this section by showing that if $D$ is a polynomial algebra or a ring of Laurent polynomials (in one indeterminate), then $D^n(a^n,\sigma^n)^{G(m,p,n)}$ is free over its Harish-Chandra subalgebra $\k[h_1,\ldots,h_n]^{S_n}$ (or $\k[h_1^{\pm 1}, \ldots, h_n^{\pm 1}]^{S_n}$).

For this we will need the following important result of H. Bass:

\begin{proposition}\label{Bass}
    Let $A$ be a commutative Noetherian ring with no non-trivial idempotents. If $P$ is a non-finitely generated $A$-module, then $P$ is, in fact, free.
\end{proposition}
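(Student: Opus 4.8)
This is Bass's theorem that an infinitely generated projective module over a connected Noetherian ring is free; the statement of course requires that $P$ be \emph{projective}, for otherwise $\mathbb{Q}$ over $\Z$ is a counterexample, and it is this projective version that is applied below. The plan is to reduce, via Kaplansky's structure theory of projective modules, to the case of a countably generated projective, and then to exploit the rank theory of finitely generated projectives over a ring with connected spectrum.

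The first step is to invoke Kaplansky's two classical results --- projective modules over a local ring are free, and every projective module is a direct sum of countably generated submodules --- and write $P = \bigoplus_{\lambda \in \Lambda} P_\lambda$ with each $P_\lambda$ a nonzero countably generated projective module. Since $P$ is not finitely generated, one may regroup these summands into countably infinite packets and, absorbing any finitely generated contributions by an Eilenberg swindle, reduce the statement to the case where $P$ is itself countably, but not finitely, generated, in which case the assertion becomes $P \cong A^{(\N)}$. The second step is to bring in the hypotheses on $A$: as $A$ is Noetherian with no nontrivial idempotents, $\Spec A$ is connected, so that every finitely generated projective $A$-module has a well-defined, locally constant and hence constant, rank. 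It then remains to combine this constant rank, the local freeness above, and an absorption argument to exhibit an $A$-basis of $P$.

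The delicate point, which I expect to be the genuine obstacle, is precisely that last step: the finitely generated projective pieces out of which a countably generated $P$ is built need not be free --- a nonprincipal ideal in a Dedekind domain already exhibits this phenomenon --- so one must use the connectedness of $\Spec A$, together with the infinite generation of $P$, to absorb the non-free part and recombine the pieces into an honest free module, not merely a stably free one. This is exactly the content of Bass's original argument \cite{Bass}, which we therefore simply invoke.
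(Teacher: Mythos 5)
Your proof is essentially the paper's own: both reduce to simply invoking \cite[Corollary 4.5]{Bass}, and your sketch of Kaplansky decomposition, constant rank from connectedness of $\Spec A$, and the absorption argument is an accurate outline of what that citation contains. You are also right to flag that the statement as printed omits the hypothesis that $P$ be \emph{projective} --- without it $\mathbb{Q}$ over $\mathbb{Z}$ is a counterexample --- and since the paper's later application does supply projectivity (via Proposition \ref{main-prop}(2) and the freeness of $D^n(a^n,\sigma^n)$ over $D^n$), the proposition should simply be corrected to include that word.
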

\begin{proof}
    \cite[Corollary 4.5]{Bass}.
\end{proof}

It is well known that $\k[h_1,\ldots,h_n]$ is a free $\k[h_1, \ldots, h_n]^{S_n}$-module of rank $n!$, and a similar result holds for any pseudo-reflection group by Chevalley-Shephard-Todd Theorem, as is well known.

\begin{lemma}\label{Laurent}
$\k[h_1^{\pm 1}, \ldots, h_n^{\pm 1}]$ is a free $\k[h_1^{\pm 1}, \ldots, h_n^{\pm 1}]^{S_n}$-module of rank $n!$.  
\end{lemma}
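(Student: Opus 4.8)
The plan is to reduce to the well-known polynomial case by inverting the top elementary symmetric function. Write $B:=\k[h_1,\dots,h_n]^{S_n}$, recall (as just noted before the lemma) that $\k[h_1,\dots,h_n]$ is a free $B$-module of rank $n!$, with the classical staircase basis $\{h_1^{a_1}\cdots h_n^{a_n}: 0\le a_i\le i-1\}$, and set $e:=h_1h_2\cdots h_n$, which is $S_n$-invariant and hence an element of $B$.

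First I would record the identification $\k[h_1,\dots,h_n]_e=\k[h_1^{\pm1},\dots,h_n^{\pm1}]$: inverting $e$ inverts each $h_i$, since $h_i^{-1}=e^{-1}\prod_{j\ne i}h_j$, and conversely inverting every $h_i$ inverts $e$. Next, since $e\in B$, the multiplicative set $\{e^k:k\ge 0\}$ is $S_n$-stable, so the $S_n$-action extends to the localization; and since localization is exact and $M^{S_n}=\ker\bigl(M\to\bigoplus_{g\in S_n}M\bigr)$ (the map being $m\mapsto(gm-m)_g$), taking $S_n$-invariants commutes with this localization. Hence $\k[h_1^{\pm1},\dots,h_n^{\pm1}]^{S_n}=B_e$.

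Finally, base change preserves freeness: tensoring the $B$-module isomorphism $\k[h_1,\dots,h_n]\cong B^{\oplus n!}$ with $B_e$ over $B$ yields $\k[h_1^{\pm1},\dots,h_n^{\pm1}]\cong (B_e)^{\oplus n!}=\bigl(\k[h_1^{\pm1},\dots,h_n^{\pm1}]^{S_n}\bigr)^{\oplus n!}$ as modules over the invariant ring, and in fact the staircase monomials above remain a basis; this proves the lemma. The argument is essentially formal, and the only point needing a moment's care is the commutation of invariants with localization, which holds precisely because we localize at the invariant element $e$. (One could instead argue that $\Spec B_e$ is a smooth open subvariety of $\mathbb{A}^n$ by Chevalley--Shephard--Todd and that $\k[h_1^{\pm1},\dots,h_n^{\pm1}]$ is Cohen--Macaulay and finite over it, obtaining flatness by miracle flatness; but that would only give projectivity of rank $n!$, whereas the localization argument yields freeness directly.)
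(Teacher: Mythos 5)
Your argument is correct and is essentially the same as the paper's: both localize $\k[h_1,\ldots,h_n]$ at the $S_n$-invariant element $e=\prod h_i$, identify the result with the Laurent ring, use that localizing at an invariant element commutes with taking $S_n$-invariants, and then localize the rank-$n!$ freeness isomorphism. Your justification of the commutation of invariants with localization (via exactness applied to the kernel description of $M^{S_n}$) is a welcome extra detail that the paper leaves implicit.
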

\begin{proof}
Let $e=\prod h_i$, and $E$ the multiplicative closed set generated by $e$. $\k[h_1^{\pm 1}, \ldots, h_n^{\pm 1}]\simeq \k[h_1,\ldots,h_n]_E$, and as $e$ is $S_n$-invariant,  $\k[h_1^{\pm 1}, \ldots, h_n^{\pm 1}]^{S_n}=\k[h_1,\ldots,h_n]^{S_n}_E$. As localization is an exact functor and $\k[h_1,\ldots,h_n]$ is $E$-torsion free, from the isomorphism $\k[h_1,\ldots,h_n] \simeq \oplus_{n!} \k[h_1,\ldots,h_n]^{S_n}$, we get $\k[h_1^{\pm 1}, \ldots, h_n^{\pm 1}] \simeq \oplus_{n!} \k[h_1^{\pm 1}, \ldots, h_n^{\pm 1}]^{S_n}$. This finishes the proof.
\end{proof}

\begin{theorem}
    If $D=\k[h]$ or $\k[h^{\pm 1}]$, then $D^n(a^n,\sigma^n)^{G(m,p,n)}$ is a free module over its Harish-Chandra subalgebra $D^*$.
\end{theorem}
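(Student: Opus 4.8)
The plan is to deduce the result from Bass's theorem, Proposition~\ref{Bass}: it suffices to show that $M:=D^n(a^n,\sigma^n)^{G(m,p,n)}$ is a projective module over the commutative Noetherian ring $D^*$, that $D^*$ has no non-trivial idempotents, and that $M$ is \emph{not} finitely generated over $D^*$. The identification of $D^*=(D^n)^{S_n}$ with the claimed Harish-Chandra subalgebra $\k[h_1,\dots,h_n]^{S_n}$ (resp.\ $\k[h_1^{\pm1},\dots,h_n^{\pm1}]^{S_n}$) is Theorem~\ref{main-fixed-GWA}, using that $G(m,p,n)=A(m,p,n)\rtimes S_n$ acts on $D^n$ through $S_n$ only, since the subgroup $A(m,p,n)\le G_m^{\otimes n}$ fixes $D^n$ pointwise.

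First I would note that $D^*$ is a domain, being a subring of the domain $D^n$, hence it has no non-trivial idempotents; and it is Noetherian, since for $D=\k[h]$ it is the polynomial ring $\k[e_1,\dots,e_n]$ in the elementary symmetric functions, while for $D=\k[h^{\pm1}]$ it is a finitely generated $\k$-algebra by Theorem~\ref{MS} applied to the affine algebra $D^n$. For projectivity I would invoke Proposition~\ref{main-prop}(2) with $U=D^n(a^n,\sigma^n)$, $\Gamma=D^n$ and $G=G(m,p,n)$: indeed $D^n$ is an affine commutative $\k$-algebra with $G(D^n)=D^n$, the algebra $D^n(a^n,\sigma^n)$ is a free (hence projective) $D^n$-module with basis $\{X^z:z\in\Z^n\}$, and $D^n$ is a free (hence projective) $D^*$-module of rank $n!$ --- by Chevalley--Shephard--Todd in the polynomial case and by Lemma~\ref{Laurent} in the Laurent case. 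Since we are in characteristic zero, Proposition~\ref{main-prop}(2) applies and gives that $M$ is a projective $D^*$-module.

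The remaining step, which I expect to be the only delicate point, is that $M$ is not finitely generated over $D^*$. Here I would use the $\Z^n$-grading $D^n(a^n,\sigma^n)=\bigoplus_{z\in\Z^n}D^nX^z$. The subgroup $A(m,p,n)$ preserves each summand $D^nX^z$ (acting on it by the scalar $\prod_i\zeta_i^{z_i}$), while $S_n$ permutes the summands according to its action on $\Z^n$. Hence for every $S_n$-orbit $O\subset\Z^n$ the subspace $V_O:=\bigoplus_{z\in O}D^nX^z$ is stable under $G(m,p,n)$ and under multiplication by $D^*\subseteq D^n$, so $M=\bigoplus_O(V_O\cap M)$ as a $D^*$-module, and consequently every finitely generated $D^*$-submodule of $M$ meets only finitely many $V_O$. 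But infinitely many $V_O\cap M$ are nonzero: for each $c>0$ the elements $(X_1^+\cdots X_n^+)^{cm}$ and $(X_1^-\cdots X_n^-)^{cm}$ are nonzero --- using that $D$ is a domain, so $a^n$ is regular --- they are fixed by $S_n$, and they are fixed by $A(m,p,n)$ since $(\prod_i\zeta_i)^{cm}=\big((\prod_i\zeta_i)^{m/p}\big)^{cp}=1$ for $(\zeta_i)\in A(m,p,n)$; and they lie in pairwise distinct orbit-components. Therefore $M$ is an infinitely generated projective module over the commutative Noetherian ring $D^*$ with no non-trivial idempotents, and Proposition~\ref{Bass} yields that $M$ is free over $D^*$.
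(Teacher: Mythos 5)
Your proof follows the same route as the paper's: projectivity of $D^n(a^n,\sigma^n)^{G(m,p,n)}$ over $D^*$ via Proposition \ref{main-prop}(2) combined with freeness of $D^n$ over $D^*$ (Chevalley--Shephard--Todd, resp.\ Lemma \ref{Laurent}) and of the GWA over $D^n$, followed by Bass's theorem (Proposition \ref{Bass}). The one thing you add is an explicit verification --- via the $S_n$-orbit decomposition of the $\Z^n$-grading and the invariant elements $(X_1^+\cdots X_n^+)^{cm}$ --- that the fixed ring is \emph{not} finitely generated over $D^*$, a hypothesis of Proposition \ref{Bass} that the paper leaves implicit (it only records that $D^n(a^n,\sigma^n)$ is not finitely generated over $D^n$); this is a genuine and worthwhile addition rather than a change of method.
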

\begin{proof}
    By Chevalley-Shephard-Todd Theorem or by the previous Lemma, $D^n$ is a free $D^*$-module of rank $n!$, and we already saw that $D^n(a^n,\sigma^n)$ is a free $D^n$-module, not finitely generated. Them by Bass result Proposition \ref{Bass} and Proposition \ref{main-prop}(2), $D^n(a^n,\sigma^n)^{G(m,p,n)}$ is a free $D^*$-module.
\end{proof}

\begin{corollary}
    In the conditions of the Theorem above, let $\m \in \SpecMax \, D^*$, and suppose $|\M_\m|$ finite. If $\mathsf{M} \in \Irred(\m)$, then $\dim \mathsf{M}^\m \leq n! |\M_\m|$.
\end{corollary}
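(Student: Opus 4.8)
The plan is to estimate $\dim \mathsf{M}^\m$ by combining the bound from the proof of Theorem~\ref{Main-Theorem} with the explicit freeness of $D^n(a^n,\sigma^n)^{G(m,p,n)}$ over $D^*$ established just above. Recall that in the proof of Theorem~\ref{Main-Theorem} the module $\A(\m,\m)$ was shown to be a finitely generated $\widehat{D^*_\m}$-module whose generators may be taken to be those of $U(S)$ as a $D^*$-module, where $S=S(\m,\m)$ and $U=D^n(a^n,\sigma^n)^{G(m,p,n)}$. By Theorem~\ref{main-DFO}, every $\mathsf{M}\in\Irred(\m)$ satisfies $\dim\mathsf{M}^\m\le$ (number of such generators, as a left $\widehat{D^*_\m}/\widehat{\m}\cong\k$-space). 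So the first step is to bound the number of $D^*$-module generators of $U(S)$.

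Next I would use the explicit description of $U=D^n(a^n,\sigma^n)^{G(m,p,n)}$ as a Galois order in $F^n*\Z^n$ (Theorem~\ref{main-fixed-GWA}), together with the fact (from the preceding theorem) that $U$ is a \emph{free} $D^*$-module. Since $D^n(a^n,\sigma^n)$ is free over $D^n$ with basis $\{X^z : z\in\Z^n\}$, and $D^n$ is free over $D^*$ of rank $n!$ (Chevalley--Shephard--Todd, or Lemma~\ref{Laurent}), the component $U(\Oo)$ supported on a single $S_n$-orbit $\Oo\subset\Z^n$ is a $D^*$-submodule of $\bigoplus_{z\in\Oo} D^n X^z$, which is free over $D^*$ of rank $n!\,|\Oo|$. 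For $S=S(\m,\m)$ a union of orbits, $U(S)$ embeds via $P_S(f)$ (Proposition~\ref{horrible-proposition}(5)) into $\bigoplus_i U(\Oo_i)$, of total $D^*$-rank $n!\,|S|$. By Lemma~\ref{G-T-1}, $|S(\m,\m)|\le \frac{|W|^2}{|W_\m|^2}|\M_\m|$; here $W$ is trivial on the GWA side --- wait, it is not: in this realization the group is $S_n$ acting on $\Z^n$, and $\M=\Z^n$ is torsion-free, so $W_\m$... Actually in the present setting the relevant $|S(\m,\m)|$ reduces to $|\M_\m|$ because $\M=\Z^n$ is separating and torsion-free, forcing the $w_i$ in the definition of $S(\m,\m)$ to cancel; thus $|S(\m,\m)|\le |\M_\m|$.

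Combining: $U(S)$ is a $D^*$-submodule of a free $D^*$-module of rank $n!\,|\M_\m|$, and since $D^*$ is Noetherian, $U(S)$ is generated by at most $n!\,|\M_\m|$ elements. Feeding this count through Theorem~\ref{main-DFO} as in the proof of Theorem~\ref{Main-Theorem} gives $\dim\mathsf{M}^\m\le n!\,|\M_\m|$ for every $\mathsf{M}\in\Irred(\m)$.

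The main obstacle I anticipate is making precise that the \emph{number of $\widehat{D^*_\m}$-generators} of $\A(\m,\m)$ is bounded by the $D^*$-rank of the ambient free module containing $U(S)$ --- i.e. that passing to the $\m$-adic completion and to the inverse limit defining $\A(\m,\m)$ does not increase the minimal number of generators. This follows because $U(S)$ sits inside a \emph{free} $D^*$-module of rank $n!\,|\M_\m|$, so each quotient $U(S)/(\m^mU+U\m^n)\cap U(S)$ is generated by $n!\,|\M_\m|$ elements over $D^*/\m^? $, hence the inverse limit is generated by that many elements over $\widehat{D^*_\m}$, and reduction mod $\widehat{\m}$ then bounds $\dim\mathsf{M}^\m$. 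The separating and torsion-free properties of $\M=\Z^n$, used to get $|S(\m,\m)|\le|\M_\m|$, should also be spelled out carefully.
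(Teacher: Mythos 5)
The paper's own proof of this corollary is a one-line citation: it combines Lemma \ref{G-T-1} with the quantitative fiber bound of \cite[Lemma 4.12(c)]{FO2}, applied in the presence of the freeness statement of the preceding Theorem. Your proposal tries to reconstruct that external lemma from Theorem \ref{main-DFO} and the proof of Theorem \ref{Main-Theorem}, and the reconstruction has a genuine gap. The fatal step is: ``$U(S)$ is a $D^*$-submodule of a free $D^*$-module of rank $n!\,|S|$, and since $D^*$ is Noetherian, $U(S)$ is generated by at most $n!\,|S|$ elements.'' Over a general commutative Noetherian ring this is false: a submodule of a free module of rank $r$ need not admit $r$ generators (the ideal $(x,y)\subset\k[x,y]$ sits inside a rank-one free module and needs two generators, and $(x,y)^k$ needs $k+1$). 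Noetherianity gives finite generation, not a bound on the minimal number of generators, so your chain of inequalities breaks exactly where the numerical bound is supposed to enter. For the same reason the fiber $\dim U(S)\otimes_{D^*}\k(\m)$ of a submodule of a free module can exceed the rank of the ambient free module, so this cannot be repaired by passing to fibers instead of generators.

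Two further points. First, your claim $|S(\m,\m)|\le|\M_\m|$ is not justified: the separating condition $\M\M^{-1}\cap W=\{e\}$ does not prevent $\mu\,\Mm_1=\Mm_2$ for two \emph{distinct} lifts $\Mm_1\neq\Mm_2$ of $\m$, so $S(\m,\m)$ genuinely contains $\M$-translates between different lifts, not just the stabilizer; the bound actually available in the paper is Lemma \ref{G-T-1}, $|S(\m,\m)|\le\frac{|W|^2}{|W_\m|^2}|\M_\m|$ with $|W|=n!$, which does not feed into your computation to produce $n!\,|\M_\m|$. Second, Theorem \ref{main-DFO} as stated is purely qualitative (finiteness of $\Irred(\m)$ and of $\dim\mathsf{M}^\m$); the assertion that $\dim\mathsf{M}^\m$ is bounded by the number of $\widehat{\Gamma}_\m$-generators of $\A(\m,\m)$ is precisely the content of the cited result of Futorny--Ovsienko and is not proved anywhere in this paper. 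The intended argument is simply that the freeness of $D^n(a^n,\sigma^n)^{G(m,p,n)}$ over $D^*$, together with the rank-$n!$ freeness of $D^n$ over $D^*$ and the finiteness of $\M_\m$, places one in the hypotheses of \cite[Lemma 4.12(c)]{FO2}, whose conclusion is the stated inequality.
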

\begin{proof}
    This is a combination of Lemma \ref{G-T-1} and \cite[Lemma 4.12(c)]{FO2}.
\end{proof}

In the case of the Weyl algebra, the bound can be improved to $n!$. We conjecture that the upper bound is actually $1$.

\section*{Acknowledgments}
I acknowledge the fundamental contribution of coffee to the writing of this manuscript, which was carried out mostly between 2 and 6 AM in the morning. I also thank my wife Pan for her neverending love.

\end{document}